\newcommand{\AS}{\textnormal{AS}}
\newcommand{\MG}{\textnormal{MG}}
\newcommand{\SMG}{\textnormal{sMG}}
\newcommand{\LMG}{\textnormal{nsMG}}
\newcommand{\fL}{\mathscr{L}}
\newcommand{\Cloc}{\const{C}{loc}^{(1)}}
\newcommand{\CLoc}{\const{C}{loc}^{(2)}}
\newcommand{\Cspan}{\const{C}{span}}
\newcommand{\Cscs}{\const{C}{SCS}}
\newcommand{\Clev}{\const{C}{lev}}
\title{Generalized preconditioned conjugate gradients \\ for adaptive FEM with optimal complexity}
\author{Paula Hilbert\orcidlink{0009-0005-0105-1066}}
\author{Ani Miraçi\orcidlink{0000-0003-4962-9662}}
\address{Sorbonne Université, Université Paris Cité, CNRS, Laboratoire Jacques-Louis Lions, LJLL, F-75005 Paris, France}
\email{\tt ani.miraci@sorbonne-universite.fr}
\author{Dirk Praetorius\orcidlink{0000-0002-1977-9830}}
\address{TU Wien, Institute of Analysis and Scientific Computing, Wiedner Hauptstr. 8--10/E101/4, 1040 Vienna, Austria}
\email{\tt paula.hilbert@asc.tuwien.ac.at \quad {\textrm{(corresponding author)}}}
\email{\tt dirk.praetorius@asc.tuwien.ac.at}
\keywords{adaptive finite element method, inexact solver, geometric multigrid, optimal preconditioner, additive Schwarz, generalized precondition conjugate gradient method}
\subjclass[2020]{65N12, 65N30, 65F10, 65N55, 65Y20}
\thanks{This research was funded in whole or in part by the Austrian Science Fund (FWF) projects
\href{https://www.fwf.ac.at/en/research-radar/10.55776/PAT3699424}{10.55776/PAT3699424}
(standalone project PAT3699424 ``Optimal robust solvers for reliable and efficient AFEMs''),
and
\href{https://www.fwf.ac.at/en/research-radar/10.55776/PAT3446525}{10.55776/PAT3446525}
(standalone project PAT3446525 ``Adaptive Uzawa-type FEM for nonlinear PDEs'').
Additionally, Paula Hilbert is supported by the Vienna School of Mathematics.}
\begin{document}

\maketitle

\begin{abstract}
We consider adaptive finite element methods (AFEMs) with inexact algebraic solvers for second-order symmetric linear elliptic diffusion problems.
Optimal complexity of AFEM, i.e., optimal convergence rates with respect to the overall computational cost, hinges on two requirements on the solver. First, each solver step is of linear cost with respect to the number of degrees of freedom. 
Second, each solver step guarantees uniform contraction of the solver error with respect to the PDE-related energy norm. 
Both properties must be ensured robustly with respect to the local mesh size $h$ (i.e., $h$-robustness). 
While existing literature on geometric multigrid methods (MG) or symmetric additive Schwarz preconditioners for the preconditioned conjugate gradient method (PCG) that are appropriately adapted to adaptive mesh-refinement satisfy these requirements, this paper aims to consider more general solvers.
Our main focus is on preconditioners stemming from contractive solvers which need not be symmetrized to be used with Krylov methods and which are not only $h$-robust but also $p$-robust, i.e., the contraction constant is independent of the polynomial degree $p$.
In particular, we show that generalized PCG (GPCG) with an $h$- and $p$-robust contractive MG as a preconditioner satisfies the requirements for optimal-complexity AFEM and that it numerically outperforms AFEM using MG as a solver. 
While this is certainly known for (quasi-)uniform meshes, the main contribution of the present work is the rigorous analysis of the interplay of the solver with adaptive mesh-refinement.
Numerical experiments underline the theoretical findings.
\end{abstract}

%%%%%%%%%%%%%%%%%%%%%%%%%%%%%%%%%%%%%%%%%%%%%%%%%%%%%%%%%%%%%%%%%%%%%%%%%%%%%%%%%%%
%%%%%%%%%%%%%%%%%%%%%%%%%%%%%%%%%%%%%%%%%%%%%%%%%%%%%%%%%%%%%%%%%%%%%%%%%%%%%%%%%%%
\section{Introduction}
%%%%%%%%%%%%%%%%%%%%%%%%%%%%%%%%%%%%%%%%%%%%%%%%%%%%%%%%%%%%%%%%%%%%%%%%%%%%%%%%%%%
%%%%%%%%%%%%%%%%%%%%%%%%%%%%%%%%%%%%%%%%%%%%%%%%%%%%%%%%%%%%%%%%%%%%%%%%%%%%%%%%%%%

Given a bounded Lipschitz domain $\Omega \subset \R^d$ for $d \in \N$, a right-hand side $f \in L^2(\Omega)$, and a  symmetric and uniformly positive definite diffusion coefficient $\vec{K} \in [L^{\infty}(\Omega)]^{d\times d}_{\text{sym}}$, we consider the second-order symmetric linear elliptic diffusion problem
\begin{align}\label{eq: model problem}
    \begin{split}
        -\div(\vec{K}\nabla u^*) &= f  \quad \text{in } \Omega, \\
        u^* &= 0 \quad \text{on } \partial\Omega.
    \end{split}
\end{align}
Adaptive finite element methods (AFEMs) allow to achieve optimal convergence rates for such problems even if the solution exhibits singularities.
These methods usually require the solution of a sequence of linear systems arising from the discretization on successively refined meshes.
Solving these systems by direct methods is computationally too expensive to guarantee optimal error decay with respect to the overall computing time, making the use of iterative solvers into the adaptive algorithm essential.
Indeed, it has been shown that uniformly contractive iterative solvers are integral to achieve optimal complexity of AFEMs, i.e., optimal convergence rates with respect to the \emph{overall computational cost} and hence time; see, e.g.,~\cite{Ste07,GHPS21, BFMPS23}. 

For the model problem~\eqref{eq: model problem}, discretized by $H^1$-conforming finite elements, the resulting Galerkin matrix $\vec{A}_{\ell}$ is symmetric and positive definite (SPD), which naturally suggests the use of the conjugate gradient method (CG) as an algebraic solver.
However, the contraction factor of CG degrades with respect to the discretization parameters, i.e., the local mesh size $h$ and the polynomial degree $p$ (see, e.g.,~\cite{CG22}).
Therefore, appropriate $h$- and $p$-robust preconditioning is required to obtain optimal convergence rates.
Designing such preconditioners, which we will denote by $\vec{B}_{\ell}$, is nontrivial, as they must, first, approximate the inverse of the Galerkin matrix sufficiently well, i.e., $\vec{B}_{\ell} \approx \vec{A}_{\ell}^{-1}$, and, second, be cheap to apply, ideally with linear computational complexity .
Robust iterative solvers themselves, such as, e.g., the geometric multigrid method introduced in~\cite{IMPS24}, are natural candidates for this purpose, as they are designed to efficiently solve the Galerkin system.

The use of additive or multiplicative multilevel methods as preconditioners for CG in finite element discretizations has a long history.
Early results include~\cite{BP93} for multilevel preconditioners in the setting of quasi-uniform and locally refined grids under a \emph{nested refinement} assumption.
For multiplicative methods, this assumption is not merely technical and often necessitates additional preprocessing, such as the reconstruction of a virtual refinement hierarchy; see, e.g.,~\cite{BEK93,BY93,HZ09}. 
For graded bisection grids, where only a single bisection is performed per refinement step, uniform convergence with respect to $h$ was shown in~\cite{CNX08,CNX12}. Even in this setting, additional coarsening algorithms are typically required to meet the theoretical assumptions; see, e.g.,~\cite{CZ10}. In~\cite{WC06}, the idea of restricting levelwise smoothing to newly created vertices and their direct neighbors was introduced, yielding uniform convergence of multigrid methods for adaptive meshes generated by newest vertex bisection (NVB) in two dimensions. 
This $h$-robust approach was later extended to three dimensions in~\cite{WZ17}.
An $h$- and $p$-robust geometric multigrid method has been proposed and analyzed in~\cite{IMPS24}. 
Its \(h\)-robustness relies on the strengthened Cauchy--Schwarz inequality from~\cite{HWZ12} and the stable decomposition established in~\cite{WZ17}. 
Its \(p\)-robustness follows from the stable decomposition in~\cite{SMPZ08}, which was previously also used in~\cite{MPV20,MPV21} to construct a \(p\)-robust contractive multigrid method (however lacking $h$-robustness in case of non-uniform refinement).

Most of the aforementioned works on preconditioners focus on linear and symmetric multilevel methods in order to comply with the standard assumptions of the preconditioned conjugate gradient method (PCG). From a computational perspective, however, it can be advantageous to consider multigrid methods with post-smoothing only, which seem to preserve comparable contraction properties at reduced cost; see, e.g.,~\cite{DHM21,MPV21, IMPS24}. 
Moreover, the analysis of multiplicative methods often requires strong assumptions on the levelwise operators, such as contraction~\cite{CNX12}, or norm bounds strictly smaller than two~\cite{BPWX91}.
In contrast,~\cite{MPV21,IMPS24} employ optimal step-sizes on each level, simplifying the contraction proofs and yielding better numerical behavior, but resulting in non-linear and non-symmetric multigrid methods.
In numerical linear algebra, however, it is well known that PCG may fail if the preconditioner is not symmetric and positive definite.
That said, PCG has been adapted in the literature to also accommodate more general preconditioners; see, e.g.,~\cite{AV91,GY99,Not00,Bla02}. 
In particular, the generalized preconditioned conjugate gradient method (GPCG) introduced in~\cite{Bla02} also allows for non-symmetric and non-linear preconditioners.

This work establishes a direct connection between uniformly contractive iterative solvers arising in AFEM and their use as preconditioners within conjugate gradient methods.
In particular, we show that \emph{any} uniformly contractive solver with linear computational complexity induces an \emph{optimal preconditioner} for GPCG from~\cite{Bla02}, i.e., the resulting algebraic solver has a contraction factor independent of $h$ and $p$ and is of linear complexity. 
While this is certainly well-known in the numerical linear algebra community for uniform triangulations, we are not aware of a general presentation with a focus on adaptively generated triangulations.
Indeed, a central feature of the analysis in this work is that no additional preprocessing of the NVB-generated mesh hierarchy is required, allowing for seamless integration of the considered solvers into the adaptive finite-element algorithm.
Moreover, the presented framework unifies techniques from numerical linear algebra and AFEM analysis, thereby allowing contractive solvers to be used as preconditioners for GPCG irrespective of symmetry or linearity. 
To the best of our knowledge, this connection has not been addressed in the literature on optimal-complexity AFEM yet.

Based on the $h$- and $p$-robust stable decomposition from~\cite{IMPS24}, we derive various optimal preconditioners for GPCG and PCG. 
We first show that the geometric multigrid method from~\cite{IMPS24} directly induces a preconditioner that indeed fits in the general framework we develop in this work. In addition, we show that this multigrid method can be linearized and symmetrized, resulting in an SPD preconditioner suitable for standard PCG without loss of optimality. 
As additive and multiplicative Schwarz methods both rely on stable decompositions; see, e.g.,~\cite{BP93,CNX08,CNX12}, we use the same decomposition to also construct an optimal additive Schwarz preconditioner; see also~\cite{FFPE17} for a similar construction in the context of the $hp$-adaptive boundary element method.
Our numerical experiments show that the non-linear and non-symmetric multigrid preconditioner applied in GPCG outperforms the standalone multigrid solver as well as the additive Schwarz preconditioner used within PCG, both in terms of the experimental contraction factor and its application within the AFEM loop. 
The linear and symmetric multigrid preconditioner used in PCG generally exhibits better contraction factors as the iteration (and thus the Krylov space dimension) increase. 
However, its worse contraction in the initial iterations impacts the overall adaptive algorithm relying typically on few solver steps.
Moreover, the linear and symmetric version is computationally more expensive since it entails that pre-smoothing steps have to be added.

Finally, we note that all results in this work rely on the refined analysis from~\cite{Pau25}, which shows that the constants in the strengthened Cauchy--Schwarz inequality as well as in the \(h\)- and \(p\)-stable subspace decompositions depend only on the local behaviour of the diffusion coefficient.
In contrast, the constants in~\cite{IMPS24} and other works depend on the ratio of the maximal and minimal eigenvalue of the diffusion matrix, i.e., global diffusion contrast. \\

\textbf{Outline.}
In Section~\ref{sec: setting}, we introduce the problem setting. 
Section~\ref{sec: GPCG} is devoted to GPCG (Algorithm~\ref{algo: GPCG}) and its analysis (Proposition~\ref{theorem: general result}). 
In Section~\ref{sec: geometric multigrid}, we derive an optimal preconditioner for GPCG (Theorem~\ref{theorem: main result}) based on the geometric multigrid method (Algorithm~\ref{algo: geometric MG}) from~\cite{IMPS24}.
Section~\ref{sec: symmetric MG} presents a linearized and symmetrized variant (Algorithm~\ref{algo: symmetric MG}) and shows that this approach likewise yields an optimal preconditioner (Theorem~\ref{corollary: symmetric MG optimality}), where GPCG reduces to PCG.
Section~\ref{sec: additive Schwarz operator} introduces an additive Schwarz preconditioner based on the same space decomposition as for the proposed multigrid preconditioners and establishes its optimality (Theorem~\ref{theorem: AS condition number bound}).
We conclude the theoretical analysis in Section~\ref{sec: AFEM} with a brief discussion of the application of the proposed solvers within the AFEM framework (Algorithm~\ref{algo: AFEM}) to solve the symmetric model problem~\eqref{eq: model problem}. 
However, we already note here that the developments of the present work play a central role in the analysis of optimal-complexity AFEM for general second-order linear elliptic PDEs in the framework of the Lax--Milgram lemma, where the solution of the linear finite element system relies on optimal preconditioners for the principal part of the PDE; see~\cite{FHMP}.
Finally, Section~\ref{sec: numerics} presents numerical experiments comparing the developed algebraic solvers.

%%%%%%%%%%%%%%%%%%%%%%%%%%%%%%%%%%%%%%%%%%%%%%%%%%%%%%%%%%%%%%%%%%%%%%%%%%%%%%%%%%%
%%%%%%%%%%%%%%%%%%%%%%%%%%%%%%%%%%%%%%%%%%%%%%%%%%%%%%%%%%%%%%%%%%%%%%%%%%%%%%%%%%%
\section{Setting}\label{sec: setting}
%%%%%%%%%%%%%%%%%%%%%%%%%%%%%%%%%%%%%%%%%%%%%%%%%%%%%%%%%%%%%%%%%%%%%%%%%%%%%%%%%%%
%%%%%%%%%%%%%%%%%%%%%%%%%%%%%%%%%%%%%%%%%%%%%%%%%%%%%%%%%%%%%%%%%%%%%%%%%%%%%%%%%%%

%%%%%%%%%%%%%%%%%%%%%%%%%%%%%%%%%%%%%%%%%%%%%%%%%%%%%%%%%%%%%%%%%%%%%%%%%%%%%%%%%%%
\subsection{Mesh and space hierarchy}
%%%%%%%%%%%%%%%%%%%%%%%%%%%%%%%%%%%%%%%%%%%%%%%%%%%%%%%%%%%%%%%%%%%%%%%%%%%%%%%%%%%

Let $\TT_0$ be an initial conforming mesh of $\Omega$ into compact simplices $T \in \TT_0$, which is admissible in the sense of~\cite{Ste08}.
From now on, we consider a sequence $(\TT_{\ell})_{\ell \in \N_0}$ of successively refined triangulations obtained by newest vertex bisection; see, e.g.,~\cite{Tra97,Ste08} for $d \geq 2$ and~\cite{AFF13} for $d=1$. 
Thus, for all $\ell \geq 1$, it holds that $\TT_{\ell} = \texttt{refine}(\TT_{\ell-1},\MM_{\ell-1})$, where $\TT_{\ell}$ is the coarsest conforming triangulation obtained by NVB ensuring that all marked elements $\MM_{\ell-1}\subseteq \TT_{\ell-1}$ have been bisected. The generated sequence is uniformly $\gamma$-shape regular, i.e.,
\begin{equation}\label{eq: shape regularity}
	\sup_{\ell\in \N_0}\max_{T \in \TT_{\ell}} \frac{\diam(T)}{|T|^{1/d}} \leq \gamma < \infty \quad \text{and} \quad \sup_{\ell \in \N_0} \max_{T \in \TT_{\ell}} \max_{\substack{T' \in \TT_{\ell} \\ T \cap T' \neq \emptyset}} \frac{\diam(T)}{\diam(T')}\leq \gamma < \infty,
\end{equation}
where $\gamma$ depends only on $\TT_0$; see, e.g.,~\cite[Theorem 2.1]{Ste08} for $d\geq 2$ and~\cite{AFF13} for $d=1$. 
Furthermore, for every mesh $\TT_{\ell}$, we denote by $\VV_{\ell}$ the set of vertices. 
For $z \in \VV_{\ell}$, we define the $n$-patch $\TT^n_{\ell}(z)$ inductively via
\begin{equation*}
\TT_{\ell}(z) \coloneqq \TT^1_{\ell}(z) \coloneqq \{T \in \TT_{\ell}: z \in T\}, \quad \TT^{n+1}_{\ell}(z) \coloneqq \{T  \in \TT_{\ell} : T \cap \overline{\omega_{\ell}^n(z)} \neq \emptyset\},
\end{equation*}
where $\omega_{\ell}^n(z) \coloneqq \operatorname{int}(\bigcup_{T \in \TT^n_{\ell}(z)} T)$ denotes the corresponding $n$-patch subdomain.
With the element size $h_T \coloneqq |T|^{1/d}$ for all $T\in \TT_{\ell}$, the size of a patch subdomain is given by $h_{\ell,z} \coloneqq \max_{T \in \TT_{\ell}(z)} h_T$.
With $\omega_{\ell}(z) \coloneqq \omega_{\ell}^1(z)$, we finally define 
\begin{equation*}
	\VV_{0}^+ \coloneqq \VV_0 \quad \text{and} \quad \VV_{\ell}^+ \coloneqq \VV_{\ell}\backslash \VV_{\ell-1} \cup \{\VV_{\ell}\cap \VV_{\ell-1} : \omega_{\ell}(z) \neq \omega_{\ell-1}(z)\} \quad \text{for } \ell \in \N,
\end{equation*}
where $\VV_{\ell}^+$ consists of the new vertices of $\TT_{\ell}$ along with their neighboring vertices in $\VV_{\ell} \cap \VV_{\ell-1}$.

Let $q \geq 1$ and $T \in \TT_{\ell}$. 
Then, $\P^q(T)$ denotes the space of all polynomials on $T$ of degree at most $q$. 
For $\ell \in \N_0$, we define the finite-dimensional subspaces
\begin{equation*}
	\XX_{\ell}^q \coloneqq  \S_0^q(\TT_{\ell}) \coloneqq \{v_{\ell} \in H_0^1(\Omega) : v_{\ell}|_{T} \in \P^q(T) \text{ for all } T \in \TT_{\ell}\}  \subset \XX \coloneqq H_0^1(\Omega)
\end{equation*}
and observe that 
\begin{equation*}
	\XX_0^1 \subseteq \XX_1^1 \subseteq \cdots \subseteq \XX_{\ell-1}^1 \subseteq \XX_{\ell}^1 \subseteq \XX_{\ell}^p,
\end{equation*}
where $p \geq 1$ is a fixed polynomial degree.
Furthermore, the local spaces $\XX_{\ell,z}^q$ are given by
\begin{equation*}
	\XX_{\ell,z}^q \coloneqq \S_0^q(\TT_{\ell}(z)) \coloneqq \{v_{\ell} \in \XX_{\ell}^q : v_{\ell}|_{T} = 0 \text{ for all } T \in  \TT_{\ell} \backslash \TT_{\ell}(z)\}.
\end{equation*}
Denote by $\varphi^1_{\ell,z}$ the usual hat-function associated with the vertex $z \in \VV_{\ell}$, i.e., $\varphi_{\ell,z}^1|_{T} \in \P^1(T)$ for all $T \in \TT_{\ell}$ and $\varphi_{\ell,z}^1(z') = \delta_{zz'}$ for all $z' \in \VV_{\ell}$. 
The set $\{\varphi^1_{\ell,z} : z \in \VV_{\ell} \backslash \partial \Omega\}$ is a basis of $\XX_{\ell}^1$. Therefore, we define the spaces $\XX_{\ell}^+$ induced by the set of vertices $\VV_{\ell}^+$ as
\begin{equation*}
	\XX_{\ell}^+ \coloneqq \text{span}\{\varphi^1_{\ell,z} : z \in \VV_{\ell}^+ \backslash \partial \Omega\} \subseteq \XX_{\ell}^1.
\end{equation*}

%%%%%%%%%%%%%%%%%%%%%%%%%%%%%%%%%%%%%%%%%%%%%%%%%%%%%%%%%%%%%%%%%%%%%%%%%%%%%%%%%%%
\subsection{Weak formulation} 
%%%%%%%%%%%%%%%%%%%%%%%%%%%%%%%%%%%%%%%%%%%%%%%%%%%%%%%%%%%%%%%%%%%%%%%%%%%%%%%%%%%

For the analysis, we need some more assumptions on the model problem~\eqref{eq: model problem}.
Firstly, we will only consider $d \in \{1,2,3\}$.
For the diffusion coefficient $\vec{K}$, we require the stronger regularity $\vec{K}|_T \in [W^{1,\infty}(T)]^{d \times d}$ for all $T \in \TT_0$, where $\TT_0$ is the initial triangulation.
More precisely, this is needed to show the strengthened Cauchy--Schwarz inequality of Lemma~\ref{lemma: strengthened CS} and to define the residual error estimator~\eqref{eq: refinement indicators}.
For $x \in \Omega$, the expressions $\lambda_{\max}(\vec{K}(x))$ and $\lambda_{\min}(\vec{K}(x))$ denote the maximal and minimal eigenvalue of $\vec{K}(x) \in \R^{d \times d}$, respectively. 
For any measurable set $\omega \subseteq \Omega$, we  denote the $L^{2}(\omega)$-scalar product with $\langle \cdot , \cdot \rangle_{\omega}$. 
The weak formulation of~\eqref{eq: model problem} reads: 
Find $u^* \in \XX$ that solves
\begin{equation} \label{eq: weak formulation}
      \edual{u^{\star}}{v}_{\Omega} \coloneqq \langle \vec{K} \nabla u^{\star}, \nabla v \rangle_{\Omega} = \langle f, v \rangle_{\Omega} \eqqcolon F(v) \quad \text{for all } v \in \XX.
\end{equation}
In particular, the Riesz theorem yields existence and uniqueness of the weak solution $u^{\star} \in \XX$ to~\eqref{eq: weak formulation}.
From here on, we omit the index $\omega$ whenever $\omega=\Omega$. Furthermore, we define the induced norm $\enorm{v}^2 \coloneqq \edual{v}{v}$ and observe that
\begin{equation*}
	\inf_{y \in \omega} \lambda_{\min}(\vec{K}(y)) \, \|\nabla v \|^2_{\omega} \leq \enorm{v}_{\omega}^2 
	\leq \sup_{y \in \omega} \lambda_{\max}(\vec{K}(y)) \, \|\nabla v \|^2_{\omega} \quad \text{for all } v \in \XX \text{ and all } \omega \subseteq \Omega. 
\end{equation*}

For a given triangulation $\TT_{\ell}$ and polynomial degree $p \geq 1$, the Galerkin discretization of~\eqref{eq: model problem} reads: 
Find $u^{\star}_{\ell} \in \XX_{\ell}^p$ such that
\begin{equation}\label{eq: discrete problem}
	\edual{u^{\star}_{\ell}}{v_{\ell}} = F(v_{\ell}) \quad \text{for all } v_{\ell} \in \XX_{\ell}^p.
\end{equation}
Let $N_{{\ell}}^p \coloneqq \dim(\XX_{\ell}^p)$ and $\{\varphi^p_{{\ell},j}\}_{j =1}^{N_{\ell}^p}$ be a basis of $\XX_{\ell}^p$. Then, the discrete problem \eqref{eq: discrete problem} can equivalently be rewritten as
$\vec{A}_{\ell} \vec{x}^{\star}_{\ell} = \vec{b}_{\ell}$, with the symmetric and positive definite Galerkin matrix
\begin{equation}\label{eq: Galerkin matrix}
	(\vec{A}_{\ell})_{jk} \coloneqq \edual{\varphi^p_{\ell,j}}{\varphi^p_{\ell,k}} \quad \text{for } j,k = 1, \dots, N_{\ell}^p  
\end{equation}
and the right-hand side vector
\begin{equation}\label{eq: rhs vector}
	(\vec{b}_{\ell})_j \coloneqq F(\varphi^p_{\ell,j}) \quad \text{for } j = 1,\dots, N_{\ell}^p.
\end{equation}
The solution vector $\vec{x}_{\ell}^{\star} \in \R^{N_{\ell}^p}$ is the coefficient vector of the discrete solution $u_{\ell}^{\star} = \sum_{j=1}^{N_{\ell}^p} (\vec{x}_{\ell}^{\star})_j \varphi^p_{\ell,j}$ with respect to the fixed basis.
To describe this connection, we note that $\vec{x}_{\ell}^{\star} = \chi^p_{\ell}(u_{\ell}^{\star})$, where
\begin{equation}\label{eq: coefficient vector map}
	\chi^p_{\ell} \colon \XX_{\ell}^p \rightarrow \R^{N_{\ell}^p}, \quad \text{is defined via} \quad v_{\ell} = \sum_{j=1}^{N_{\ell}^p} \chi^p_{\ell}(v_{\ell})_j \varphi^p_{\ell,j}. 
\end{equation}

%%%%%%%%%%%%%%%%%%%%%%%%%%%%%%%%%%%%%%%%%%%%%%%%%%%%%%%%%%%%%%%%%%%%%%%%%%%%%%%%%%%
%%%%%%%%%%%%%%%%%%%%%%%%%%%%%%%%%%%%%%%%%%%%%%%%%%%%%%%%%%%%%%%%%%%%%%%%%%%%%%%%%%%
\section{Generalized preconditioned conjugate gradient method}\label{sec: GPCG}
%%%%%%%%%%%%%%%%%%%%%%%%%%%%%%%%%%%%%%%%%%%%%%%%%%%%%%%%%%%%%%%%%%%%%%%%%%%%%%%%%%%
%%%%%%%%%%%%%%%%%%%%%%%%%%%%%%%%%%%%%%%%%%%%%%%%%%%%%%%%%%%%%%%%%%%%%%%%%%%%%%%%%%%

In this section, we recall the generalized preconditioned conjugate gradient method from~\cite{Bla02}.
The contraction of the method relies on a discrete assumption of the preconditioner matrix; see~\cite[Theorem 3.4]{Bla02}.
We use this to show that any uniformly contractive algebraic solver with linear cost induces an optimal preconditioner for GPCG. Let $(\vec{x},\vec{y})_2 \coloneqq \vec{x}^T\vec{y}$ denote the Euclidean scalar product with corresponding norm $|\vec{x}|_{2} \coloneqq (\vec{x},\vec{x})_2^{1/2}$. 
For an SPD matrix $\vec{A} \in \R^{N \times N}$, we denote by $(\vec{x},\vec{y})_{\vec{A}} \coloneqq \vec{x}^T \vec{A}\vec{y}$ the induced scalar product with corresponding norm $|\vec{x}|_{\vec{A}}\coloneqq (\vec{x},\vec{x})_{\vec{A}}^{1/2}$.
Lastly, we denote by $\textnormal{cond}_2(\vec{M})$ and $\textnormal{cond}_{\vec{A}}(\vec{M})$ the condition number of a regular matrix $\vec{M} \in \R^{N \times N}$ with respect to the norms $|\cdot|_2$ and $|\cdot|_{\vec{A}}$, respectively.

%%%%%%%%%%%%%%%%%%%%%%%%%%%%%%%%%%%%%%%%%%%%%%%%%%%%%%%%%%%%%%%%%%%%%%%%%%%%%%%%%%%
\subsection{Symmetric preconditioners} 
%%%%%%%%%%%%%%%%%%%%%%%%%%%%%%%%%%%%%%%%%%%%%%%%%%%%%%%%%%%%%%%%%%%%%%%%%%%%%%%%%%%

In this section, we briefly recall the classical and preconditioned conjugate gradient method for solving linear systems $\vec{A}_{\ell}\vec{x}^{\star}_{\ell} = \vec{b}_{\ell}$ with a symmetric and positive definite matrix $\vec{A}_{\ell} \in \R^{N_{\ell}^p \times N_{\ell}^p}$. 
Let us begin with the CG algorithm; see, e.g.,~\cite{HS52}.

\begin{algorithm}[CG]\label{algo: CG}
	\textbf{Input:} SPD matrix $\vec{A}_{\ell} \in \R^{N_{\ell}^p \times N_{\ell}^p}$, right-hand side $\vec{b}_{\ell} \in \R^{N_{\ell}^p}$, initial guess $\vec{x}_{\ell}^0 \in \R^{N_{\ell}^p}$, and tolerance $\tau \geq 0$.  \\
	Set $\vec{p}_{\ell}^0 \coloneqq \vec{r}_{\ell}^0 = \vec{b}_{\ell} - \vec{A}_{\ell}\vec{x}_{\ell}^0$ and repeat for all $k=0,1,2, \ldots$ until $|\vec{r}_{\ell}^k|_2^2 <\tau$:
	\begin{enumerate}[label=\textnormal{(\roman*)}]
		\item \textbf{Update approximation:} $\vec{x}_{\ell}^{k+1} \coloneqq \vec{x}_{\ell}^k + \alpha_k \vec{p}_{\ell}^k$ with $\alpha_k = |\vec{r}_{\ell}^k|_2^2 /|\vec{p}_{\ell}^k|_{\vec{A}_{\ell}}^2$
		\item \textbf{Update residual:} $\vec{r}_{\ell}^{k+1} \coloneqq \vec{r}_{\ell}^k - \alpha_k \vec{A}_{\ell}\vec{p}_{\ell}^k$
		\item \textbf{Compute new search direction:} $\vec{p}_{\ell}^{k+1} \coloneqq \vec{r}_{\ell}^{k+1} + \beta_k \vec{p}_{\ell}^k$ with $\beta_k = |\vec{r}_{\ell}^{k+1}|_2^2 / |\vec{r}_{\ell}^k|_2^2$
	\end{enumerate}
	\textbf{Output:} Approximation $\vec{x}_{\ell}^{k}$ to the solution $\vec{x}_{\ell}^{\star}$ of $\vec{A}_{\ell}\vec{x}_{\ell}^{\star} = \vec{b}_{\ell}$.
\end{algorithm}

The following result gives a convergence estimate for the CG method. For more details and a proof, we refer to \cite[Theorem 11.3.3]{GV13}.

\begin{proposition}[Contraction of CG]\label{theorem: CG convergence}
	Let $\vec{A}_{\ell} \in \R^{N_{\ell}^p \times N_{\ell}^p}$ be a symmetric and positive definite matrix and let $\vec{x}_{\ell}^{\star} \in \R^{N_{\ell}^p}$ be the unique solution of the linear system $\vec{A}_{\ell}\vec{x}_{\ell}^{\star} = \vec{b}_{\ell}$. 
	Then, for any initial guess $\vec{x}_{\ell}^0 \in \R^{N_{\ell}^p}$, the sequence $\vec{x}_{\ell}^k  \in \R^{N_{\ell}^p}$ produced by Algorithm~\ref{algo: CG} guarantees contraction
	\begin{equation*}
		|\vec{x}_{\ell}^{\star} - \vec{x}_{\ell}^{k+1}|_{\vec{A}_{\ell}}\leq \Big(1-\frac{1}{\textnormal{cond}_2(\vec{A}_{\ell})}\Big)^{1/2}|\vec{x}_{\ell}^{\star} - \vec{x}_{\ell}^k|_{\vec{A}_{\ell}}. \qed
	\end{equation*}
\end{proposition}

For the Galerkin matrix $\vec{A}_{\ell}$, the condition number $\textnormal{cond}_2(\vec{A}_{\ell})$ depends on and grows with the local mesh size $h$ and the polynomial degree $p$. 
One way to achieve better contraction is to introduce an SPD preconditioner $\vec{B}_{\ell}$ such that the condition number of $\vec{B}_{\ell}^{1/2} \vec{A}_{\ell} \vec{B}_{\ell}^{1/2}$ is bounded independently of $h$ and $p$ and to apply CG to the modified system
\begin{equation}\label{eq: preconditioned system}
	\widetilde{\vec{A}}_{\ell} \widetilde{\vec{x}}^{\star}_{\ell}\coloneqq \vec{B}_{\ell}^{1/2} \vec{A}_{\ell} \vec{B}_{\ell}^{1/2} \widetilde{\vec{x}}_{\ell}^{\star} = \vec{B}_{\ell}^{1/2} \vec{b}_{\ell} \eqqcolon \widetilde{\vec{b}}_{\ell}.
\end{equation}
This is known as the preconditioned CG method (PCG).
Note that $\widetilde{\vec{A}}_{\ell}$ is SPD, so indeed CG can be applied.
For the initial guess $\vec{x}^{0}_{\ell} \in \R^{N_{\ell}^p}$, let $\widetilde{\vec{x}}^{0}_{\ell} \coloneqq \vec{B}_{\ell}^{-1/2} \vec{x}_{\ell}^{0}$ and let $\widetilde{\vec{x}}_{\ell}^k$, $\widetilde{\vec{r}}_{\ell}^k$, and $\widetilde{\vec{p}}_{\ell}^k$ denote the sequences generated by Algorithm~\ref{algo: CG} for the preconditioned system~\eqref{eq: preconditioned system}. 
Furthermore, let $\vec{x}_{\ell}^k \coloneqq \vec{B}_{\ell}^{1/2}\widetilde{\vec{x}}_{\ell}^k$, $\vec{r}_{\ell}^k \coloneqq \vec{B}_{\ell}^{-1/2}\widetilde{\vec{r}}_{\ell}^k$, and $\vec{p}_{\ell}^k \coloneqq \vec{B}_{\ell}^{1/2}\widetilde{\vec{p}}_{\ell}^k$. 
Then, there holds
\begin{align*}
	\vec{p}_{\ell}^0 &= \vec{B}_{\ell} \vec{r}_{\ell}^0, \hspace{-2.5cm}\\
	\vec{x}_{\ell}^{k+1} &= \vec{x}_{\ell}^k + \alpha_k \vec{p}_{\ell}^k, \hspace{-2.5cm} &&\text{where } \alpha_k = |\vec{r}_{\ell}^k|^2_{\vec{B}_{\ell}}/|\vec{p}_{\ell}^k|^{2}_{\vec{A}_{\ell}}, \\
	\vec{r}_{\ell}^{k+1} &= \vec{r}_{\ell}^k - \alpha_k \vec{A}_{\ell} \vec{p}_{\ell}^k, \hspace{-2.5cm} \\
	\vec{p}_{\ell}^{k+1} &= \vec{B}_{\ell}\vec{r}_{\ell}^{k+1} + \beta_k \vec{p}_{\ell}^k, \hspace{-2.5cm}&&\text{where } \beta_k =|\vec{r}_{\ell}^{k+1}|^2_{\vec{B}_{\ell}}/|\vec{r}_{\ell}^k|^2_{\vec{B}_{\ell}} 
\end{align*}
Moreover, for $\vec{y}_{\ell} \in \R^{N_{\ell}^p}$ and $\widetilde{\vec{y}}_{\ell} \coloneqq \vec{B}_{\ell}^{-1/2}\vec{y}_{\ell}$, we have the identity
\begin{align}\label{eq: norm equality sym matrices}
	\begin{split}
	|\vec{y}_{\ell}|_{\vec{A}_{\ell}}^2 &= (\vec{A}_{\ell}\vec{y}_{\ell},\vec{y}_{\ell})_2 = (\vec{A}_{\ell} \vec{B}_{\ell}^{1/2}\widetilde{\vec{y}}_{\ell}, \vec{B}_{\ell}^{1/2}\widetilde{\vec{y}}_{\ell})_2 \\
	&= (\vec{B}_{\ell}^{1/2}\vec{A}_{\ell}\vec{B}_{\ell}^{1/2}\widetilde{\vec{y}}_{\ell}, \widetilde{\vec{y}}_{\ell})_2 = |\widetilde{\vec{y}}_{\ell}|^2_{\vec{B}_{\ell}^{1/2}\vec{A}_{\ell}\vec{B}_{\ell}^{1/2}}.
	\end{split} 
\end{align}
With $q \coloneqq (1-1/\textnormal{cond}_2(\vec{B}_{\ell}^{1/2}\vec{A}_{\ell}\vec{B}_{\ell}^{1/2}))^{1/2}$, it hence follows from Proposition~\ref{theorem: CG convergence} that
\begin{equation*}
	|\vec{x}_{\ell}^{\star}-\vec{x}_{\ell}^k|_{\vec{A}_{\ell}} \overset{\eqref{eq: norm equality sym matrices}}{=} |\widetilde{\vec{x}}_{\ell}^{\star} - \widetilde{\vec{x}}_{\ell}^k|_{\vec{B}_{\ell}^{1/2}\vec{A}_{\ell}\vec{B}_{\ell}^{1/2}} 
	\leq q |\widetilde{\vec{x}}_{\ell}^{\star} - \widetilde{\vec{x}}_{\ell}^0|_{\vec{B}_{\ell}^{1/2}\vec{A}_{\ell}\vec{B}_{\ell}^{1/2}} \overset{\eqref{eq: norm equality sym matrices}}{=} q |\vec{x}_{\ell}^{\star}-\vec{x}_{\ell}^k|_{\vec{A}_{\ell}}.
\end{equation*}
Finally, \cite[Theorem C.1]{TW05} shows that
\begin{equation}\label{eq: condition numberfor PCG}
	\textnormal{cond}_{2}(\vec{B}_{\ell}^{1/2}\vec{A}_{\ell}\vec{B}_{\ell}^{1/2}) = \textnormal{cond}_{\vec{A}_{\ell}}(\vec{B}_{\ell}\vec{A}_{\ell}).
\end{equation}
Therefore, it suffices to show $h$- and $p$-independent boundedness of $\textnormal{cond}_{\vec{A}_{\ell}}(\vec{B}_{\ell}\vec{A}_{\ell})$.

%%%%%%%%%%%%%%%%%%%%%%%%%%%%%%%%%%%%%%%%%%%%%%%%%%%%%%%%%%%%%%%%%%%%%%%%%%%%%%%%%%%
\subsection{Non-linear and non-symmetric preconditioners}
%%%%%%%%%%%%%%%%%%%%%%%%%%%%%%%%%%%%%%%%%%%%%%%%%%%%%%%%%%%%%%%%%%%%%%%%%%%%%%%%%%%

Let $\vec{B}_{\ell}: \R^{N_{\ell}^p} \rightarrow \R^{N_{\ell}^p}$ denote a generally non-linear and non-symmetric preconditioner.
Then, the generalized preconditioned conjugate gradient method introduced in~\cite{Bla02} is given by the following algorithm.

\begin{algorithm}[GPCG]\label{algo: GPCG}
	\textbf{Input:} SPD matrix $\vec{A}_{\ell} \in \R^{N_{\ell}^p \times N_{\ell}^p}$, preconditioner $\vec{B}_{\ell}: \R^{N_{\ell}^p} \rightarrow \R^{N_{\ell}^p}$, right-hand side $\vec{b}_{\ell} \in \R^{N_{\ell}^p}$, initial guess $\vec{x}_{\ell}^0 \in \R^{N_{\ell}^p}$, and tolerance $\tau >0$. \\
	Set $\vec{r}_{\ell}^0 \coloneqq \vec{b}_{\ell}- \vec{A}_{\ell}\vec{x}_{\ell}^0$ and $\vec{p}_{\ell}^0 \coloneqq \vec{B}_{\ell}[\vec{r}_{\ell}^0]$ and repeat for all $k=0,1,2,\ldots$ until $|\vec{r}_{\ell}^k|^{2}_2 < \tau$: 
	\begin{enumerate}[label=\textnormal{(\roman*)}, ref = \textnormal{\roman*}]
		\item \textbf{Update approximation:} $\vec{x}_{\ell}^{k+1} \coloneqq \vec{x}_{\ell}^k + \alpha_k \vec{p}_{\ell}^k$ with $\alpha_k \coloneqq (\vec{B}_{\ell}[\vec{r}_{\ell}^k], \vec{r}_{\ell}^k)_2 /|\vec{p}_{\ell}^k|^2_{\vec{A}_{\ell}}$
		\item \textbf{Update residual:} $\vec{r}_{\ell}^{k+1} \coloneqq \vec{r}_{\ell}^k - \alpha_k \vec{A}_{\ell}\vec{p}_{\ell}^k$
		\item \label{item: GPCG step 3}\textbf{Compute new search direction:} $\vec{p}_{\ell}^{k+1} \coloneqq \vec{B}_{\ell}[\vec{r}_{\ell}^{k+1}] + \beta_k \vec{p}_{\ell}^k$ with 
		\begin{equation*}
			\beta_k \coloneqq \frac{(\vec{B}_{\ell}[\vec{r}_{\ell}^{k+1}], \vec{r}_{\ell}^{k+1})_2-(\vec{B}_{\ell}[\vec{r}_{\ell}^{k+1}], \vec{r}_{\ell}^{k})_2}{(\vec{B}_{\ell}[\vec{r}_{\ell}^{k}], \vec{r}_{\ell}^{k})_2} 
		\end{equation*}
	\end{enumerate}
	\textbf{Output:} Approximation $\vec{x}_{\ell}^{k}$ to the solution $\vec{x}_{\ell}^{\star}$ of $\vec{A}_{\ell}\vec{x}_{\ell}^{\star} = \vec{b}_{\ell}$.
\end{algorithm}

In~\cite{Bla02}, different assumptions on the preconditioner $\vec{B}_{\ell}$ are investigated. 
We will only consider the case where $\vec{B}_{\ell}$ is a good approximation of the SPD matrix $\vec{A}_{\ell}^{-1} \in \R^{N_{\ell}^p \times N_{\ell}^p}$, in the sense that there exists an $h$- and $p$-robust factor $q \in (0,1)$ such that
\begin{equation}\label{eq: preconditioner assumption}
	|\vec{B}_{\ell}[\vec{x}_{\ell}]-\vec{A}_{\ell}^{-1}\vec{x}_{\ell}|_{\vec{A}_{\ell}} \leq q \, |\vec{A}_{\ell}^{-1}\vec{x}_{\ell}|_{\vec{A}_{\ell}} \quad \text{for all } \vec{x}_{\ell} \in \R^{N_{\ell}^p}.
\end{equation}
In~\cite[Theorem 3.4]{Bla02}, contraction of GPCG under the assumption~\eqref{eq: preconditioner assumption} is shown. 
While~\eqref{eq: preconditioner assumption} is a discrete assumption, we use it to show the following central result.

\begin{proposition}[Contractive solvers are general preconditioners]\label{theorem: general result}
	Let $\SS_{\ell} : \XX_{\ell}^p \rightarrow \XX_{\ell}^p$ denote the error propagation operator of a given algebraic solver, i.e., $u_{\ell}^{k+1}-u_{\ell}^k = \SS_{\ell}(u_{\ell}^{\star}-u_{\ell}^k)$. 
	Assume the solver is contractive, i.e., there exists a constant $q \in (0,1)$ such that
	\begin{equation}\label{eq: contraction of operator}
		\enorm{(I-\SS_{\ell})u_{\ell}} \leq q \, \enorm{u_{\ell}} \quad \text{for all } u_{\ell} \in \XX_{\ell}^p.
	\end{equation}
	Let $\vec{B}_{\ell} : \R^{N_{\ell}^p} \rightarrow \R^{N_{\ell}^p}$ denote the representation of $\SS_{\ell}$ acting on coefficient vectors, meaning that $\chi^p_{\ell}(\SS_{\ell} u_{\ell}) = \vec{B}_{\ell}[\vec{A}_{\ell}\chi^p_{\ell}(u_{\ell})]$ for all $u_{\ell} \in \XX_{\ell}^p$, where $\vec{A}_{\ell}$ is the Galerkin matrix satisfying $\enorm{\cdot} = |\chi^p_{\ell}(\cdot)|_{\vec{A}_{\ell}}$.
	Then, the GPCG method with preconditioner $\vec{B}_{\ell}$ applied to the Galerkin system $\vec{A}_{\ell} \vec{x}_{\ell}^{\star} = \vec{b}_{\ell}$ yields an iterative solver with contraction factor $q$, i.e.,
	\begin{equation}\label{eq:preconditioned GPCG contraction}
		|\vec{x}_{\ell}^{\star}-\widetilde{\vec{x}}_{\ell}^{k+1}|_{\vec{A}_{\ell}} \leq q \, |\vec{x}_{\ell}^{\star}-\widetilde{\vec{x}}_{\ell}^{k}|_{\vec{A}_{\ell}} \quad \text{for all } k \in \N_0.
	\end{equation}
	For $u_{\ell}^{\star} = \sum_{j=1}^{N_{\ell}^p} (\vec{x}_{\ell}^{\star})_j \varphi^p_{\ell,j}$ and $\widetilde{u}_{\ell}^{k} = \sum_{j=1}^{N_{\ell}^p} (\widetilde{\vec{x}}_{\ell}^{k})_j \varphi^p_{\ell,j}$, this directly translates to 
	\begin{equation*}
		 \enorm{u_{\ell}^{\star}- \widetilde{u}_{\ell}^{k+1}} \leq q \, \enorm{u_{\ell}^{\star} - \widetilde{u}_{\ell}^k} \quad \text{for all } k \in \N_0. 
	\end{equation*}
\end{proposition}

\begin{proof}[\textbf{Proof}]
	Let $u_{\ell} \in \XX_{\ell}^p$ and $\vec{y}_{\ell} = \chi^p_{\ell}(u_{\ell})$.
	By the assumption $\chi^p_{\ell}(\SS_{\ell} u_{\ell}) = \vec{B}_{\ell}[\vec{A}_{\ell}\vec{y}_{\ell}]$, the definition of the energy norm, and~\eqref{eq: contraction of operator}, we have
	\begin{equation*}
		|\vec{y}_{\ell}-\vec{B}_{\ell}[\vec{A}_{\ell}\vec{y}_{\ell}]|_{\vec{A}_{\ell}} = \enorm{u_{\ell}-\SS_{\ell} u_{\ell}} \leq q \, \enorm{u_{\ell}} = q \, |\vec{y}_{\ell}|_{\vec{A}_{\ell}} \quad \text{for all } \vec{y}_{\ell} \in \R^{N_{\ell}^p}.
	\end{equation*}
	Choosing $\vec{y}_{\ell} = \vec{A}_{\ell}^{-1}\vec{x}_{\ell}$, we obtain assumption~\eqref{eq: preconditioner assumption}.
	Applying~\cite[Theorem 3.4]{Bla02} gives the desired contraction~\eqref{eq:preconditioned GPCG contraction}.
\end{proof}

\begin{remark}
The result of Proposition~\ref{theorem: general result} is entirely to be expected, since all quantities have been defined and denoted so as to fit into the framework of~\cite{Bla02}.
We emphasize, however, that in practice one must still verify that a given contractive solver can indeed be represented as a preconditioner, i.e., the property $\chi^p_{\ell}(\SS_{\ell} u_{\ell}) = \vec{B}_{\ell}[\vec{A}_{\ell}\chi^p_{\ell}(u_{\ell})]$ for all $u_{\ell} \in \XX_{\ell}^p$ has to be validated. 
For the geometric multigrid method from~\cite{IMPS24}, his is done in the next section.
\end{remark}

%%%%%%%%%%%%%%%%%%%%%%%%%%%%%%%%%%%%%%%%%%%%%%%%%%%%%%%%%%%%%%%%%%%%%%%%%%%%%%%%%%%
%%%%%%%%%%%%%%%%%%%%%%%%%%%%%%%%%%%%%%%%%%%%%%%%%%%%%%%%%%%%%%%%%%%%%%%%%%%%%%%%%%%
\section{Optimal non-linear and non-symmetric multigrid preconditioner}\label{sec: geometric multigrid}
%%%%%%%%%%%%%%%%%%%%%%%%%%%%%%%%%%%%%%%%%%%%%%%%%%%%%%%%%%%%%%%%%%%%%%%%%%%%%%%%%%%
%%%%%%%%%%%%%%%%%%%%%%%%%%%%%%%%%%%%%%%%%%%%%%%%%%%%%%%%%%%%%%%%%%%%%%%%%%%%%%%%%%%

In this section, we consider the $h$- and $p$-robustly contractive geometric multigrid method from~\cite{IMPS24} and rewrite the algorithm in matrix formulation with the purpose of verifying~\eqref{eq: preconditioner assumption} and using this solver as a (non-symmetric and non-linear) preconditioner for GPCG.

\subsection{$\textit{h}$- and $\textit{p}-$robust geometric multigrid method}\label{sec: hp multigrid}
For fixed $\ell$, we define the residual functional $R_{\ell}: \XX_{\ell}^p \rightarrow \R$ associated with the current approximation $u_{\ell}$ of $u_{\ell}^{\star}$ by $R_{\ell}(v_{\ell}) \coloneqq \edual{u_{\ell}^{\star} - u_{\ell}}{v_{\ell}}$. 
Then, one V-cycle of the geometric multigrid method is given by the following algorithm.

\begin{algorithm}[V-cycle of local multigrid method~\cite{IMPS24}]\label{algo: geometric MG}
	\textbf{Input:} Current approximation $u_{\ell} \in \XX_{\ell}^p$, triangulations $\{\TT_{\ell'}\}_{\ell'=0}^\ell$ and polynomial degree $p \geq 1$. \\
	Perform the following steps \textnormal{(i)--(iii)}:
	\begin{enumerate}[label=\textnormal{(\roman*)}, ref = \textnormal{\roman*}]
		\item \label{eq: MG step 1}\textbf{Lowest-order coarse solve:} Find $\rho_0 \in \XX_0^1$ such that 
		\begin{equation}\label{eq: coarse solve}
			\edual{\rho_0}{v_0} = R_{\ell}(v_0) \quad \text{for all } v_0 \in \XX_0^1. 
		\end{equation}
		Define $\sigma_0 \coloneqq \rho_0$ and $\lambda_0 \coloneqq 1$.

		\item \label{eq: MG step 2}\textbf{Local lowest-order correction:} For all intermediate levels $\ell' = 1, \dots, \ell-1$ and all $z \in \VV_{\ell'}^+$, compute $\rho_{\ell',z} \in \XX_{\ell',z}^1$ such that
		\begin{equation}\label{eq: local lowest order smoothing}
			\edual{\rho_{\ell',z}}{v_{\ell',z}} = R_{\ell}(v_{\ell',z}) - \edual{\sigma_{\ell'-1}}{v_{\ell',z}} \quad \text{for all } v_{\ell',z} \in \XX_{\ell',z}^1.
		\end{equation}
		Define $\rho_{\ell'} \coloneqq \sum_{z \in \VV_{\ell'}^+} \rho_{\ell',z}$, $\nu_{\ell'} \coloneqq \frac{R_{\ell}(\rho_{\ell'})- \edual{\sigma_{\ell'-1}}{\rho_{\ell'}}}{\enorm{\rho_{\ell'}}^2}$, and
		\begin{equation*}
			\sigma_{\ell'} \coloneqq  \sigma_{\ell'-1} + \lambda_{\ell'} \rho_{\ell'}, \quad \text{where} \quad
			\lambda_{\ell'} \coloneqq \begin{cases}
				\nu_{\ell'}  & \text{if }  \nu_{\ell'}\leq d+1, \\
				(d+1)^{-1} & \text{otherwise}.
			\end{cases}
		\end{equation*}

		\item \label{eq: MG step 3}\textbf{Local high-order correction:} For all $z \in \VV_{\ell}$, compute $\rho_{\ell,z} \in \XX_{\ell,z}^p$ such that
		\begin{equation}\label{eq: local high order problem}
			\edual{\rho_{\ell,z}}{v_{\ell,z}} = R_{\ell}(v_{\ell,z}) - \edual{\sigma_{\ell-1}}{v_{\ell,z}} \quad \text{for all } v_{\ell,z} \in \XX_{\ell,z}^p.
		\end{equation}
		Define $\rho_{\ell} \coloneqq \sum_{z \in \VV_{\ell}}\rho_{\ell,z}$ and
		\begin{equation*}
			\sigma_{\ell} \coloneqq  \sigma_{\ell-1} + \lambda_{\ell}\rho_{\ell}, \quad \text{where} \quad
			\lambda_{\ell} \coloneqq \frac{R_{\ell}(\rho_{\ell})- \edual{\sigma_{\ell-1}}{\rho_{\ell}}}{\enorm{\rho_{\ell}}^2}.
		\end{equation*} 
	\end{enumerate}
	\textbf{Output:} Improved approximation $\Phi_{\ell}(u_{\ell}) \coloneqq u_{\ell} + \sigma_{\ell} $.
\end{algorithm}

\begin{remark}
	In the case \(p=1\), it suffices to restrict step~\eqref{eq: MG step 3} of Algorithm~\ref{algo: geometric MG} to \(\VV_{\ell}^+\), i.e., the local problems~\eqref{eq: local high order problem} need only be solved for \(z \in \VV_{\ell}^+\).
	All comments and results then hold accordingly.
\end{remark}

Using the Galerkin matrix $\vec{A}_{\ell}$ from~\eqref{eq: Galerkin matrix} and the vector $\vec{b}_{\ell}$ from~\eqref{eq: rhs vector}, we want to rewrite Algorithm~\ref{algo: geometric MG} in terms of linear algebra.
To this end, we introduce the following notation:
For $\ell' \in \{0,\dots, \ell\}$, consider, only for an analytical perspective, the embedding $\II^+_{\ell'}: \XX_{\ell'}^+ \rightarrow \XX_{\ell}^p$, i.e., the formal identity, with matrix representation $\vec{I}^+_{\ell'} \in \R^{N_{\ell}^p \times N_{\ell'}^+}$, where $N_{\ell'}^+ \coloneqq \dim(\XX_{\ell'}^+)$.  
Similarly, the embeddings $\II_{\ell',z}^p : \XX_{\ell',z}^p \rightarrow \XX_{\ell}^p$ also have matrix representations $\vec{I}_{\ell',z}^p \in \R^{N_{\ell}^p \times N_{\ell',z}^p}$ with $N_{\ell',z}^p \coloneqq \dim(\XX_{\ell',z}^p)$. 
Denote by $\vec{A}_{\ell'}$, $\vec{A}_{\ell'}^+$, and $\vec{A}_{\ell',z}^p$ the Galerkin matrices with respect to $\XX_{\ell'}^1$, $\XX_{\ell'}^+$, and $\XX_{\ell',z}^p$, respectively. 
We define the diagonal matrices $\vec{D}_{\ell'}^+ \in \R^{N_{\ell'}^+ \times N_{\ell'}^+}$ via $(\vec{D}^+_{\ell'})_{jk} \coloneqq \delta_{jk} (\vec{A}^+_{\ell'})_{jj}$.
Furthermore, consider the levelwise smoothers 
\begin{align}\label{eq: levelwise matrices}
	\begin{split}
	\vec{S}_0 &\coloneqq \vec{I}^+_0 \vec{A}_0^{-1} (\vec{I}^+_0)^T\vec{A}_{\ell}, \quad \vec{S}_{\ell'} \coloneqq   \, \vec{I}^+_{\ell'}(\vec{D}_{\ell'}^{+})^{-1} (\vec{I}^+_{\ell'})^T \vec{A}_{\ell} \quad \text{for } \ell' = 1, \dots, \ell-1, \quad \text{and} \\
	\vec{S}_{\ell} &\coloneqq  \sum_{z \in \VV_{\ell}} \vec{I}_{\ell,z}^p (\vec{A}_{\ell,z}^p)^{-1} (\vec{I}_{\ell,z}^p)^T \vec{A}_{\ell}.
	\end{split}
\end{align}
Lastly, we note that matrix products, and analogously products of operators, are generally non-commutative.
In particular, for matrices or operators $C_0,\dots,C_{\ell}$, we define
\begin{equation*}
	\prod_{\ell'=0}^{\ell} C_{\ell'} \coloneqq  C_0 C_1 \cdots C_{\ell} \quad \text{as well as} \quad \prod_{\ell' = \ell}^0 C_{\ell'} \coloneqq C_\ell C_{\ell-1} \cdots C_0.
\end{equation*}

In~\cite{IMPS24}, it is already remarked that the solver from Algorithm~\ref{algo: geometric MG} is of linear complexity per step. 
However, let us provide some more details. 

\begin{remark}[Computational complexity of Algorithm~\ref{algo: geometric MG}]\label{rem: complexity of geometric MG}
	The computational cost on the initial mesh depends only on $\# \TT_0$. 
	The matrices for the local high-order problems~\eqref{eq: local high order problem} have dimension $\OO(p^d)$, where the notationally hidden constant depends only on $\gamma$-shape regularity.
	Since we solve such a system on every patch, the computational complexity on the finest level is of order $\OO(p^{3d}\# \TT_{\ell})$.
	As $\dim(\XX_{\ell',z}^1)=1$, each of local lowest-order problems~\eqref{eq: local lowest order smoothing} can be solved in $\OO(1)$ operations.
	Hence, the computation of $\rho_{\ell'}$ is of order $\OO(\# \VV_{\ell'}^+)$ for $\ell' = 1, \dots, \ell-1$.
	Moreover, let us discuss the calculation of the step-size $\lambda_{\ell'}$ when it is bounded by $(d+1)$. 
	By definition of the local problems~\eqref{eq: local lowest order smoothing}, we have
	\begin{equation*}
		R_{\ell}(\rho_{\ell'}) - \edual{\sigma_{\ell'-1}}{v_{\ell',z}} = \sum_{z \in \VV_{\ell'}^+} \enorm{\rho_{\ell',z}}^2
	\end{equation*}
	and the resulting sum can be computed in $\OO(\# \VV_{\ell'}^+)$ operations.
	Let $\vec{s}_{\ell'} = \chi_{\ell'}^1[\rho_{\ell'}]$ denote the vector corresponding to $\rho_{\ell'}$. 
	Since the Galerkin matrix $\vec{A}_{\ell'}$ is sparse and we are solely interested in the value $\enorm{\rho_{\ell'}}^2 = \vec{s}_{\ell'}^T \vec{A}_{\ell'}\vec{s}_{\ell'}$ and $\rho_{\ell'} \in \XX_{\ell'}^+$, we only need to compute the entries of $\vec{A}_{\ell'} \vec{s}_{\ell'}$ corresponding to the vertices in $\VV_{\ell'}^+$.
	This can be done in $\OO(\# \VV_{\ell'}^+)$ operations.
	Thus, the overall computational complexity on an intermediate level $\ell'$ is of order $\OO(\# \VV_{\ell'}^+)$.
	By the definition of $\VV_{\ell'}^+$, we have 
	\begin{equation*}
		\sum_{\ell'=1}^{\ell-1} \# \VV_{\ell'}^+ \lesssim \sum_{\ell'=1}^{\ell-1} \# (\VV_{\ell'}\backslash \VV_{\ell'-1}) = \sum_{\ell'=1}^{\ell-1} (\# \VV_{\ell'}-\# \VV_{\ell'-1}) = \# \VV_{\ell-1} - \# \VV_{0} \leq \# \VV_{\ell} \simeq \# \TT_{\ell}.
	\end{equation*}
	Therefore, the overall computational complexity of Algorithm~\ref{algo: geometric MG} is of order $\OO(\# \TT_{\ell})$ and the notationally hidden constant depends only on the initial mesh $\TT_0$, the polynomial degree $p$, the dimension $d$, and $\gamma$-shape regularity.
	In particular, the overall complexity does not depend on the number $\ell + 1$ of triangulations $\TT_{0}, \dots, \TT_{\ell}$.

Note that the matrices $\vec{I}_{\ell'}^+$ and $\vec{I}_{\ell,z}^p$ above are introduced solely for the purpose of the analysis and are not computed in the actual implementation.
In practice, Algorithm~\ref{algo: geometric MG} only employs the prolongation and restriction matrices between consecutive levels. 
These can be assembled in $\mathcal{O}(\# \VV_{\ell'}^+)$ operations on the intermediate levels and in $\mathcal{O}(\# \TT_{\ell})$ operations on the finest level, thus preserving the overall linear complexity of the method.
This recursive prolongation and restriction between consecutive levels is standard practice in multigrid methods; see, e.g.,~\cite{XQ94}. 
\end{remark}

Let us denote the algebraic residual of the current iterate $u_{\ell} \in \XX_{\ell}$ by $\vec{r}_{\ell} \coloneqq \vec{b}_{\ell} - \vec{A}_{\ell} \vec{x}_{\ell} \in \R^{N_{\ell}^p}$, where $\vec{x}_{\ell} = \chi^p_{\ell}(u_{\ell}) \in \R^{N_{\ell}^p}$.
With the matrices from~\eqref{eq: levelwise matrices}, we obtain 

\begin{enumerate}[label=\textnormal{(\roman*)}]
	\item \textbf{Lowest-order coarse solve:} 
	Let $\widetilde{\vec{s}}_0 = \chi_0^1(\rho_0) \in \R^{N^1_0}$ denote the coefficient vector of the solution $\rho_0 \in \XX_0^1$ to the coarse problem~\eqref{eq: coarse solve}. 
    Since $\VV_0^+ = \VV_0$ and hence $\XX_0^+ = \XX_0$, problem~\eqref{eq: coarse solve} can equivalently be reformulated by the linear system $\vec{A}_0\widetilde{\vec{s}}_0 = (\vec{I}^+_0)^T \vec{r}_{\ell}$.
	Due to the nestedness of the finite element spaces, it holds $\rho_0 \in \XX_{\ell}^p$ and thus $\vec{s}_0 = \vec{I}^+_0 \widetilde{\vec{s}}_0$ for $\vec{s}_0 = \chi^p_{\ell}(\rho_0) \in \R^{N_{\ell}^p}$. Hence, we have
	\begin{equation*}
		\vec{s}_0 = \vec{I}_0^+ \widetilde{\vec{s}}_0 = \vec{I}^+_0 (\vec{A}_0)^{-1} (\vec{I}^+_0)^T \vec{r}_{\ell} \eqqcolon \vec{B}_0 [\vec{r}_{\ell}]. 
	\end{equation*}

	\item \textbf{Local lowest-order correction:} 
	Combining the local contributions $\rho_{1,z}$ on the first level, we consider $\vec{s}_1 = \chi^p_{\ell}(\rho_1)$. 
	From~\eqref{eq: local lowest order smoothing}, it follows that
	\begin{align*}
		\vec{s}_1 &=  \vec{I}^+_1(\vec{D}_1^+)^{-1}(\vec{I}^+_1)^T(\vec{r}_{\ell}-\lambda_0\vec{A}_{\ell}\vec{s}_0)  \\
		&= \vec{I}^+_1(\vec{D}_1^+)^{-1}(\vec{I}^+_1)^T(\vec{I}-\lambda_0\vec{A}_{\ell}\vec{I}^+_0(\vec{A}_0)^{-1}(\vec{I}^+_0)^T)\vec{r}_{\ell} \eqqcolon \vec{B}_1 [\vec{r}_{\ell}].
	\end{align*}
	For $\vec{s}_2 = \chi^p_{\ell}(\rho_2) \in \R^{N_\ell^p}$, we can therefore show
	\begin{align*}
		&\vec{s}_2 =  \vec{I}^+_2(\vec{D}_2^+)^{-1}(\vec{I}^+_2)^T (\vec{r}_{\ell} - \vec{A}_{\ell}(\lambda_1\vec{s}_1+\lambda_0\vec{s}_0))  \\
		&=  \vec{I}^+_2(\vec{D}_2^+)^{-1}(\vec{I}^+_2)^T \Big(\vec{r}_{\ell}- \lambda_1\vec{A}_{\ell} \vec{I}^+_1(\vec{D}_1^+)^{-1}(\vec{I}^+_1)^T(\vec{r}_{\ell}-\lambda_0\vec{A}_{\ell}\vec{B}_0[\vec{r}_{\ell}]) - \lambda_0\vec{A}_{\ell} \vec{B}_0[\vec{r}_{\ell}] \Big) \\
		&=  \vec{I}^+_2(\vec{D}_2^+)^{-1}(\vec{I}^+_2)^T \big(\vec{I}-  \lambda_1\vec{A}_{\ell} \vec{I}^+_1(\vec{D}_1^+)^{-1}(\vec{I}_1^+)^T\big)\big(\vec{I}-\lambda_0\vec{A}_{\ell} \vec{I}^+_0(\vec{A}_0)^{-1}(\vec{I}^+_0)^T\big) \vec{r}_{\ell} \eqqcolon \vec{B}_2[\vec{r}_{\ell}].
	\end{align*}
	For easier notation, we set $\vec{D}_0^+ \coloneqq \vec{A}_0$. 
	By induction on $\ell'$, we obtain the update
	\begin{equation}\label{eq: levelwise preconditioner}
		\vec{s}_{\ell'} = \vec{I}^+_{\ell'} (\vec{D}_{\ell'}^+)^{-1} (\vec{I}^+_{\ell'})^T \prod_{i=\ell'-1}^{0} (\vec{I}- \lambda_{i} \vec{A}_{\ell}\vec{I}^+_{i}(\vec{D}_{i}^{+})^{-1} (\vec{I}^+_{i})^T)\vec{r}_{\ell} \eqqcolon \vec{B}_{\ell'}[\vec{r}_{\ell}],
	\end{equation}
	where $\vec{s}_{\ell'} = \chi^p_{\ell}(\rho_{\ell'}) \in \R^{N_{\ell}^p}$ and $\ell' = 1, \dots, \ell-1$.

	\item \textbf{Local high-order correction:} On the finest level, we have
	\begin{equation*}
		\vec{s}_{\ell} = \sum_{z \in \VV_{\ell}} \vec{I}_{\ell,z}^p (\vec{A}_{\ell,z}^p)^{-1} (\vec{I}_{\ell,z}^p)^T \prod_{\ell'= \ell-1}^{0} (\vec{I}- \lambda_{\ell'}\vec{A}_{\ell} \vec{I}^+_{\ell'} (\vec{D}_{\ell'}^{+})^{-1} (\vec{I}^+_{\ell'})^T) \vec{r}_{\ell}
		\eqqcolon \vec{B}_{\ell} [\vec{r}_{\ell}].
	\end{equation*}
\end{enumerate}

Thus, we obtain the update
\begin{equation}\label{eq: MG update}
	\chi^p_{\ell}(\Phi_{\ell}(u_{\ell})) = \vec{x}_{\ell} + \sum_{\ell'=0}^\ell \lambda_{\ell'}\vec{s}_{\ell'} = \vec{x}_{\ell} + \sum_{\ell'=0}^\ell \lambda_{\ell'} \vec{B}_{\ell'}[\vec{r}_{\ell}].
\end{equation}

\begin{remark}[Nonlinearity of the preconditioner]
	We note that the step-sizes $\lambda_{\ell'}$ for $\ell' \in \{1,\dots,\ell\}$ are also functions of the current residual $\vec{r}_{\ell}$, i.e., $\lambda_{\ell'} = \lambda_{\ell'}[\vec{r}_{\ell}]$. 
	More precisely, for $\ell' < \ell$ the step-sizes are defined as
	\begin{equation*}
		\lambda_{\ell'}[\vec{r}_{\ell}] \coloneqq \begin{cases}
			\nu_{\ell'}[\vec{r}_{\ell}] & \text{if } \nu_{\ell'}[\vec{r}_{\ell}] \leq d+1 \\
			(d+1)^{-1} & \text{otherwise},
		\end{cases}
	\end{equation*}
	where
	\begin{equation*}
		\nu_{\ell'}[\vec{r}_{\ell}] \coloneqq \frac{(\vec{r}_{\ell},\vec{B}_{\ell'}[\vec{r}_{\ell}])_2 - \big(\sum_{i=0}^{\ell'-1}\lambda_i[\vec{r}_{\ell}]\vec{B}_{i}[\vec{r}_{\ell}],\vec{B}_{\ell'}[\vec{r}_{\ell}]\big)_{\vec{A}_{\ell}}}{|\vec{B}_{\ell'}[\vec{r}_{\ell}]|^2_{\vec{A}_{\ell}}}.
	\end{equation*}	
	In the case $\ell' = \ell$, we have $\lambda_{\ell}[\vec{r}_{\ell}] = \nu_{\ell}[\vec{r}_{\ell}]$.
	From this, it is clear that the operators $\vec{B}_{\ell'} : \R^{N_{\ell}^p} \rightarrow \R^{N_{\ell}^p}$  are indeed non-linear.
	For easier notation, we will omit the dependence on $\vec{r}_{\ell}$ in the following when it is clear from the context.
	Moreover, we set $\lambda_0[\vec{r}_{\ell}] \coloneqq 1$.
\end{remark}

Consider the orthogonal projections $\PP^q_{\ell'}: \XX_{\ell}^p \to \XX_{\ell'}^q$ and  $\PP_{\ell',z}^q: \XX_{\ell}^p \to \XX_{\ell',z}^q$ which, for $q \in \{1,p\}$ and each $v \in \XX_{\ell}^p$, are given by
\begin{align}
	\edual{\PP^q_{\ell'} v}{w_{\ell'}} &= \edual{v}{w_{\ell'}}  \hspace{-1.5cm}&& \text{for all } w_{\ell'} \in \XX_{\ell'}^q, \label{eq: global projections}\\
	\edual{\PP_{\ell',z}^q v}{w_{\ell',z}} &= \edual{v}{w_{\ell',z}}  \hspace{-1.5cm}&& \text{for all } w_{\ell',z} \in \XX_{\ell',z}^q. \label{eq: local projections}
\end{align}
With these, we can define levelwise operators $\SS_0 : \XX_{\ell}^p \rightarrow \XX_0^1$, $\SS_{\ell'} : \XX_{\ell}^p \rightarrow \XX_{\ell'}^+$ for $\ell' = 1, \dots, \ell-1$, and $\SS_{\ell} : \XX_{\ell}^p \rightarrow \XX_{\ell}^p$ via
\begin{equation}\label{eq: levelwise operators}
	\SS_0 \coloneqq \PP_0^1, \quad \SS_{\ell'} \coloneqq  \sum_{z \in \VV_{\ell'}^+} \PP^1_{\ell',z} \quad \text{for } \ell' = 1, \dots, \ell-1, \quad \text{and} \quad \SS_{\ell} \coloneqq  \sum_{z \in \VV_{\ell}}\PP_{\ell,z}^p.
\end{equation}
The following lemma shows the connection between the levelwise operators $\SS_{\ell'}$ from~\eqref{eq: levelwise operators} and the matrices $\vec{S}_{\ell'}$ from~\eqref{eq: levelwise matrices}.

\begin{lemma}[Levelwise smoothers functional/matrix representation]\label{lemma: levelwise operator and Matrix}
	Let $\ell' \in \{0, \dots, \ell\}$.
	Then, it holds that
	\begin{equation}\label{eq: levelwise operator and Matrix}
		\edual{\SS_{\ell'} v_{\ell}}{w_{\ell}} = ( \vec{S}_{\ell'} \vec{x}_{\ell},\vec{y}_{\ell})_{\vec{A}_{\ell}} \quad \text{for all } v_{\ell},w_{\ell} \in \XX_{\ell}^p \text{ and } \vec{x}_{\ell} = \chi^p_{\ell}(v_{\ell}), \, \vec{y}_{\ell} = \chi^p_{\ell}(w_{\ell}).
	\end{equation}
\end{lemma}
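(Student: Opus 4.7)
The plan is to prove the identity in three steps, corresponding to the three cases in the definition~\eqref{eq: levelwise operators} of $\SS_{\ell'}$: the coarsest level $\ell'=0$, the intermediate levels $\ell' \in \{1,\dots,\ell-1\}$, and the finest level $\ell'=\ell$. In each case I would show the stronger identity $\chi^p_{\ell}[\SS_{\ell'} v_{\ell}] = \vec{S}_{\ell'} \vec{x}_{\ell}$ at the level of coefficient vectors, from which the claim~\eqref{eq: levelwise operator and Matrix} follows by the elementary observation that, by the definition~\eqref{eq: Galerkin matrix} of $\vec{A}_{\ell}$, we have $\edual{u_{\ell}}{w_{\ell}} = (\chi^p_\ell[u_\ell])^T \vec{A}_{\ell} \chi^p_{\ell}[w_{\ell}]$ for all $u_{\ell}, w_{\ell} \in \XX_{\ell}^p$.

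For $\ell'=0$, I would let $\rho_0 := \PP_0^1 v_{\ell} \in \XX_0^1$ with coordinates $\widetilde{\vec{z}}_0 \in \R^{N_0^1}$ in the basis of $\XX_0^1$. Testing~\eqref{eq: global projections} against the basis functions $\varphi_{0,j}^1$ of $\XX_0^1 = \XX_0^+$ (recall $\VV_0^+ = \VV_0$) and noting that $\chi_\ell^p[\varphi_{0,j}^1] = \vec{I}_0^+ \vec{e}_j$, I get $\vec{A}_0 \widetilde{\vec{z}}_0 = (\vec{I}_0^+)^T \vec{A}_{\ell} \vec{x}_{\ell}$; embedding via $\chi_\ell^p[\rho_0] = \vec{I}_0^+ \widetilde{\vec{z}}_0$ gives exactly $\vec{S}_0 \vec{x}_{\ell}$.

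For intermediate levels $\ell' \in \{1,\dots,\ell-1\}$, the key observation is that each $\XX_{\ell',z}^1$ is one-dimensional, so~\eqref{eq: local projections} yields explicitly $\PP_{\ell',z}^1 v_{\ell} = \edual{v_{\ell}}{\varphi_{\ell',z}^1}\,\varphi_{\ell',z}^1 / \enorm{\varphi_{\ell',z}^1}^2$. Since $\enorm{\varphi_{\ell',z}^1}^2 = (\vec{A}_{\ell'}^+)_{zz} = (\vec{D}_{\ell'}^+)_{zz}$ by the very definition of the diagonal matrix $\vec{D}_{\ell'}^+$, and since $\edual{v_\ell}{\varphi_{\ell',z}^1} = \vec{x}_\ell^T \vec{A}_\ell \vec{I}_{\ell'}^+ \vec{e}_z = ((\vec{I}_{\ell'}^+)^T \vec{A}_\ell \vec{x}_\ell)_z$, summing over $z \in \VV_{\ell'}^+$ and applying the embedding $\vec{I}_{\ell'}^+$ produces precisely $\vec{I}_{\ell'}^+ (\vec{D}_{\ell'}^+)^{-1} (\vec{I}_{\ell'}^+)^T \vec{A}_{\ell} \vec{x}_{\ell} = \vec{S}_{\ell'} \vec{x}_{\ell}$. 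For $\ell' = \ell$, the argument parallels the case $\ell'=0$ applied patch-wise: for each $z \in \VV_\ell$, the vector $\vec{c}_z$ of coordinates of $\PP_{\ell,z}^p v_{\ell}$ in the basis of $\XX_{\ell,z}^p$ satisfies $\vec{A}_{\ell,z}^p \vec{c}_z = (\vec{I}_{\ell,z}^p)^T \vec{A}_{\ell} \vec{x}_{\ell}$; embedding and summing yields $\vec{S}_{\ell} \vec{x}_{\ell}$.

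I do not expect any genuine obstacle here: the lemma is essentially a bookkeeping statement translating the abstract operator definitions~\eqref{eq: levelwise operators} into the matrix-level formulas~\eqref{eq: levelwise matrices}. The only points requiring care are (i) the correct identification of $\vec{D}_{\ell'}^+$ with the diagonal of the energy products $\enorm{\varphi_{\ell',z}^1}^2$ (hence the use of a \emph{diagonal Jacobi-type} smoother on intermediate levels rather than the full patch solve), and (ii) consistently distinguishing coefficient vectors on the subspaces $\XX_{\ell'}^+$, $\XX_{\ell,z}^p$ from their images in $\R^{N_\ell^p}$ obtained via the embedding matrices $\vec{I}_{\ell'}^+$ and $\vec{I}_{\ell,z}^p$.
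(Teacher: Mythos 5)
Your proof is correct and follows essentially the same route as the paper's: both establish the stronger coefficient-vector identity $\chi^p_{\ell}[\SS_{\ell'} v_{\ell}] = \vec{S}_{\ell'}\vec{x}_{\ell}$ for each of the three cases and then pair with $\vec{y}_{\ell}$ in the $\vec{A}_{\ell}$-inner product. Where the paper merely writes ``similarly'' for the intermediate levels, you spell out the relevant step, namely that $\dim(\XX^1_{\ell',z}) = 1$ so that $\PP^1_{\ell',z}$ is a rank-one Jacobi-type projection and $\enorm{\varphi^1_{\ell',z}}^2$ is the diagonal entry of $\vec{A}^+_{\ell'}$, which explains why $(\vec{D}^+_{\ell'})^{-1}$ rather than $(\vec{A}^+_{\ell'})^{-1}$ appears in $\vec{S}_{\ell'}$; this is the same content, just made explicit.
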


\begin{proof}[\textbf{Proof}]
	Let $v_{\ell},w_{\ell} \in \XX_{\ell}^p$, $\vec{y}_{\ell} = \chi^p_{\ell}(w_{\ell}), \vec{x}_{\ell} = \chi^p_{\ell}(v_{\ell}), \vec{x}_0 = \chi^p_{\ell}(\PP^1_0v_{\ell})$, and $\widetilde{\vec{x}}_0 = \chi_0^1(\PP_0^1v_{\ell}) \in \R^{N_0^1}$. 
	Then, the definition~\eqref{eq: global projections} of $\PP^1_0$ implies that $\vec{A}_0 \widetilde{\vec{x}}_0 = (\vec{I}^+_0)^T \vec{A}_{\ell} \vec{x}_{\ell}$. Using $\SS_0 = \PP_0^1$, we thus get
	\begin{equation}\label{eq: coarse solve matrix}
		\vec{x}_0 = \vec{I}_0^+ \widetilde{\vec{x}}_0 = \vec{I}_0^+ \vec{A}_0^{-1} (\vec{I}^+_0)^T \vec{A}_{\ell} \vec{x}_{\ell} \eqreff{eq: levelwise matrices}= \vec{S}_0 \vec{x}_{\ell}.
	\end{equation}
	Therefore, we have
	\begin{equation*}
		\edual{\SS_0 v_{\ell}}{w_{\ell}} = (\vec{x}_0,\vec{y}_{\ell})_{\vec{A}_{\ell}} \overset{\eqref{eq: coarse solve matrix}}{=} (\vec{S}_0 \vec{x}_{\ell},\vec{y}_{\ell})_{\vec{A}_{\ell}}.
	\end{equation*}
	For $\ell'=1, \dots, \ell-1$, we similarly find
	\begin{equation*}
		\vec{x}_{\ell'} = \vec{I}_{\ell'}^+ (\vec{D}_{\ell'}^+)^{-1} (\vec{I}_{\ell'}^+)^T \vec{A}_{\ell} \vec{x}_{\ell} = \vec{S}_{\ell'} \vec{x}_{\ell}
	\end{equation*} 
	and 
	\begin{equation*}
		\edual{\SS_{\ell'} v_{\ell}}{w_{\ell}} = (\vec{x}_{\ell'},\vec{y}_{\ell})_{\vec{A}_{\ell'}} = (\vec{S}_{\ell'} \vec{x}_{\ell},\vec{y}_{\ell})_{\vec{A}_{\ell}}.
	\end{equation*}
	Finally, for the finest level $\ell$, we obtain~\eqref{eq: levelwise operator and Matrix} directly from the definiton~\eqref{eq: local projections} of $\PP^p_{\ell,z}$ and summation over $z \in \VV_{\ell}$.
	This concludes the proof.
\end{proof}

\subsection{Induced multigrid preconditioner and corresponding main result}\label{sec: induced MG preconditioner}

With the notation from the last section, we define the multigrid preconditioner
\begin{equation}\label{eq: MG preconditioner}
	\vec{B}_{\ell}^{\MG} \coloneqq \sum_{\ell'=0}^\ell \lambda_{\ell'} \vec{B}_{\ell'}: \R^{N_{\ell}^p} \rightarrow \R^{N_{\ell}^p}.
\end{equation}

The subsequent theorem shows that GPCG with preconditioner $\vec{B}_{\ell}^{\MG}$ contracts by an $h$- and $p$-independent factor, where the proof is postponed to Section~\ref{subsec: prove GPCG MG}.
Together with Remark~\ref{rem: complexity of geometric MG}, it follows that the preconditioner $\vec{B}_{\ell}^{\MG}$ is indeed optimal.

\begin{theorem}[GPCG with non-linear and non-symmetric MG preconditioner]\label{theorem: main result}
	Let $\vec{x}_{\ell}^k$ be the GPCG iterates generated by Algorithm~\ref{algo: GPCG} applied to the Galerkin system $\vec{A}_{\ell} \vec{x}_{\ell}^{\star} = \vec{b}_{\ell}$  employing the non-linear and non-symmetric multigrid preconditioner $\vec{B}_{\ell}^{\MG}$ defined in~\eqref{eq: MG preconditioner}.
	Then, there holds
	\begin{equation}\label{eq: MG preconditioned GPCG contraction}
		|\vec{x}_{\ell}^{\star}-\vec{x}_{\ell}^{k+1}|_{\vec{A}_{\ell}} \leq q \, |\vec{x}_{\ell}^{\star}-\vec{x}_{\ell}^k|_{\vec{A}_{\ell}} \quad \text{for all } k \in \N_0,
	\end{equation}
	where $q$ is indepedent of $h$, $p$, and only depends locally on the diffusion contrast $\vec{K}$.
	For $u_{\ell}^{\star} = \sum_{j=1}^{N_{\ell}^p} (\vec{x}_{\ell}^{\star})_j \varphi^p_{\ell,j}$ and $u_{\ell}^{k} = \sum_{j=1}^{N_{\ell}^p} (\vec{x}_{\ell}^{k})_j \varphi^p_{\ell,j}$, this directly translates to 
	\begin{equation*}
		 \enorm{u_{\ell}^{\star}- u_{\ell}^{k+1}} \leq q \, \enorm{u_{\ell}^{\star} - u_{\ell}^k} \quad \text{for all } k \in \N_0. 
	\end{equation*}
\end{theorem}

The following auxiliary lemma simplifies the application of the multigrid preconditioner~\eqref{eq: MG preconditioner}.

\begin{lemma}\label{lemma: unnötig}
	Let $\{\vec{M}_{\ell'}\}_{\ell'=0}^{\ell}$ be a family of matrices. Then, it holds that
	\begin{equation}\label{eq: unnötig}
		\sum_{\ell'=0}^{\ell} \vec{M}_{\ell'} \prod_{i=\ell'-1}^{0} (\vec{I}-\vec{M}_{i}) = \vec{I} - \prod_{\ell'= \ell}^{0} (\vec{I}-\vec{M}_{\ell'}).
	\end{equation}
\end{lemma}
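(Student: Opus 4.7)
The plan is to prove the identity by recognizing that the left-hand side is a telescoping sum. The key observation is the elementary algebraic identity
\begin{equation*}
\vec{M}_{\ell'} \prod_{i=0}^{\ell'-1} (\vec{I}-\vec{M}_i)
= \prod_{i=0}^{\ell'-1} (\vec{I}-\vec{M}_i) - \prod_{i=0}^{\ell'} (\vec{I}-\vec{M}_i),
\end{equation*}
which follows by writing $\vec{M}_{\ell'} = \vec{I} - (\vec{I}-\vec{M}_{\ell'})$ and distributing across the product. I would first verify this rewriting explicitly, taking care with the ordering of factors, since the $\vec{M}_i$ need not commute (the product on the right-hand side appends $(\vec{I}-\vec{M}_{\ell'})$ on the right, which matches the structure of the original product in the statement, so the non-commutativity is not an issue as long as the product convention is fixed consistently).

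Once this telescoping identity is in place, I would sum over $\ell' = 0, \dots, \ell$ and obtain
\begin{equation*}
\sum_{\ell'=0}^{\ell} \vec{M}_{\ell'} \prod_{i=0}^{\ell'-1} (\vec{I}-\vec{M}_i)
= \sum_{\ell'=0}^{\ell} \Bigl( \prod_{i=0}^{\ell'-1} (\vec{I}-\vec{M}_i) - \prod_{i=0}^{\ell'} (\vec{I}-\vec{M}_i) \Bigr)
= \vec{I} - \prod_{\ell'=0}^{\ell} (\vec{I}-\vec{M}_{\ell'}),
\end{equation*}
using the convention that the empty product (for $\ell' = 0$) equals $\vec{I}$.

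Alternatively, a simple induction on $\ell$ also works: the base case $\ell = 0$ reduces to $\vec{M}_0 = \vec{I} - (\vec{I}-\vec{M}_0)$, and the induction step isolates the summand with $\ell' = \ell$, applies the induction hypothesis to the remaining sum, and factors $(\vec{I}-\vec{M}_{\ell})$ on the left. I do not anticipate any obstacle here; the only subtlety worth flagging is the product ordering in the general non-commutative setting, which must be respected consistently with the convention used throughout Section~\ref{sec: geometric multigrid} (namely, factors with larger index appear on the left, as in~\eqref{eq: levelwise preconditioner}).
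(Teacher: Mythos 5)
Your telescoping argument is correct and is essentially the same algebra as the paper's proof, which proceeds by induction on $\ell$: the paper's induction step is precisely the rearrangement
\begin{equation*}
\prod_{\ell'=0}^{\ell}(\vec{I}-\vec{M}_{\ell'})
= (\vec{I}-\vec{M}_{\ell})\prod_{\ell'=0}^{\ell-1}(\vec{I}-\vec{M}_{\ell'})
= \prod_{\ell'=0}^{\ell-1}(\vec{I}-\vec{M}_{\ell'}) - \vec{M}_{\ell}\prod_{\ell'=0}^{\ell-1}(\vec{I}-\vec{M}_{\ell'}),
\end{equation*}
which is your telescoping identity in disguise, and you yourself note the induction route as an equivalent alternative.
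One small slip worth fixing: mid-proposal you write that the product $\prod_{i=0}^{\ell'}(\vec{I}-\vec{M}_i)$ ``appends $(\vec{I}-\vec{M}_{\ell'})$ on the right,'' but under the paper's convention (larger index on the left, as in~\eqref{eq: levelwise preconditioner}, and as you correctly state at the end) the new factor is appended on the \emph{left}; your displayed telescoping identity is nonetheless correct, since factoring $\vec{M}_{\ell'}=\vec{I}-(\vec{I}-\vec{M}_{\ell'})$ out on the left is exactly what produces that left-appended factor.
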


\begin{proof}
	The proof is done by induction on $\ell$.
	For $\ell = 0$, we have
	\begin{equation*}
		\vec{M}_0 = \vec{I} - (\vec{I}-\vec{M}_0).
 	\end{equation*}
	Using the induction hypothesis for $\ell-1$, we obtain
	\begin{align*}
		\vec{I} - \prod_{\ell'= \ell}^{0}(\vec{I}-\vec{M}_{\ell'}) &= \vec{I} - (\vec{I}-\vec{M}_{\ell})\prod_{\ell'=\ell-1}^{0}(\vec{I}-\vec{M}_{\ell'}) = \vec{I} - \prod_{\ell'= \ell-1}^{0} (\vec{I}-\vec{M}_{\ell'}) + \vec{M}_{\ell} \prod_{i=\ell-1}^{0} (\vec{I}-\vec{M}_{i})  \\
		&= \sum_{\ell'=0}^{\ell-1} \vec{M}_{\ell'}\prod_{i=\ell'-1}^{0} (\vec{I}-\vec{M}_{i}) + \vec{M}_{\ell} \prod_{i= \ell-1}^{0}(\vec{I}-\vec{M}_{i})
		= \sum_{\ell'=0}^{\ell} \vec{M}_{\ell'} \prod_{i=\ell'-1}^{0} (\vec{I}-\vec{M}_{i}).
	\end{align*}
	This concludes the proof.
\end{proof}

The application of the multigrid preconditioner $\vec{B}_{\ell}^{\MG}$ to $\vec{A}_{\ell}\vec{x}_{\ell}$ for $\vec{x}_{\ell} \in \R^{N_{\ell}^p}$ can hence be simplified using the levelwise matrices $\vec{S}_{\ell'}$ defined in~\eqref{eq: levelwise matrices}.
First, we note that
\begin{align}\label{eq: levelwise matrices B}
	\begin{split}
	\vec{B}_{\ell'}[\vec{A}_{\ell}\vec{x}_{\ell}] \, &\eqreff*{eq: levelwise preconditioner}{=} \, \vec{I}^+_{\ell'} (\vec{D}_{\ell'}^+)^{-1} (\vec{I}^+_{\ell'})^T \prod_{i = \ell' -1}^{0} (\vec{I}- \lambda_{i}[\vec{A}_{\ell}\vec{x}_{\ell}] \vec{A}_{\ell}\vec{I}^+_{i}(\vec{D}_{i}^{+})^{-1} (\vec{I}^+_{i})^T)\vec{A}_{\ell} \vec{x}_{\ell} \\
	&= \vec{I}^+_{\ell'} (\vec{D}_{\ell'}^+)^{-1} (\vec{I}^+_{\ell'})^T \vec{A}_{\ell}\prod_{i = \ell'-1}^{0} (\vec{I}- \lambda_{i}[\vec{A}_{\ell}\vec{x}_{\ell}] \vec{I}^+_{i}(\vec{D}_{i}^{+})^{-1} (\vec{I}^+_{i})^T \vec{A}_{\ell}) \vec{x}_{\ell} \\
	&= \vec{S}_{\ell'} \prod_{i=\ell'-1}^{0} (\vec{I}- \lambda_{i}[\vec{A}_{\ell}\vec{x}_{\ell}] \vec{S}_{i})\vec{x}_{\ell}.
	\end{split}
\end{align}
Applying Lemma~\ref{lemma: unnötig} to $\vec{M}_{\ell'} = \lambda_{\ell'}[\vec{A}_{\ell}\vec{x}_{\ell}]\vec{S}_{\ell'}$, we thus observe that 
\begin{align*}\label{eq: MG preconditioner application}
	\begin{split}
	\vec{B}_{\ell}^{\MG}[\vec{A}_{\ell}\vec{x}_{\ell}] \overset{\eqref{eq: MG preconditioner}}{=} \sum_{\ell'=0}^\ell \lambda_{\ell'}[\vec{A}_{\ell}\vec{x}_{\ell}] \vec{B}_{\ell'} [\vec{A}_{\ell}\vec{x}_{\ell}] \, &\overset{\mathclap{\eqref{eq: levelwise matrices B}}}{=} \, \vec{S}_0 \vec{x}_{\ell}+ \sum_{\ell'=1}^{\ell} \lambda_{\ell'}[\vec{A}_{\ell}\vec{x}_{\ell}] \vec{S}_{\ell'} \prod_{i= \ell'-1}^{0} (\vec{I}- \lambda_{i}[\vec{A}_{\ell}\vec{x}_{\ell}] \vec{S}_{i}) \vec{x}_{\ell} \\
	&\overset{\mathclap{\eqref{eq: unnötig}}}{=} \,\Big(\vec{I} - \prod_{\ell'=\ell}^{0}(\vec{I}- \lambda_{\ell'}[\vec{A}_{\ell} \vec{x}_{\ell}]\vec{S}_{\ell'})\Big)\vec{x}_{\ell}.
	\end{split}
\end{align*}

With the identity $I$, let us define the operator $\SS_{\ell}^{\MG} : \XX_{\ell}^p \rightarrow \XX_{\ell}^p$ as
\begin{equation}\label{eq: mulitplicative schwarz operator}
	\SS_{\ell}^{\MG} v_{\ell} \coloneqq \Big(I- \prod_{\ell'= \ell}^{0} (I- \lambda_{\ell'}[\vec{A}_{\ell}\chi^p_{\ell}(v_{\ell})]\SS_{\ell'})\Big)v_{\ell}.
\end{equation}

\begin{remark}[Link multigrid and Schwarz methods]\label{remark: MS methods}
	The presented multigrid preconditioner~\eqref{eq: MG preconditioner} is related to multiplicative Schwarz methods. 
	To be precise, the operator $\SS_{\ell}^{\MG}$ has a multiplicative structure with levelwise operators $\lambda_{\ell'}\SS_{\ell'}$. 
	However, these operators are non-linear since the step-sizes $\lambda_{\ell'}$ depend on the input $v_{\ell} \in \XX_{\ell}^p$ via $\lambda_{\ell'} = \lambda_{\ell'}[\vec{A}_{\ell}\chi^p_{\ell}(v_{\ell})]$.
	If one were to omit the step-sizes $\lambda_{\ell'}$, then the operator $\SS_{\ell}^{\MG}$ would be a multiplicative Schwarz operator in the classical sense. 
	For more details on multiplicative Schwarz methods, we refer to~\cite{BPWX91,CW93}.
\end{remark}

The subsequent proposition shows the connection between the operator $\SS_{\ell}^{\MG}$ from~\eqref{eq: mulitplicative schwarz operator} and the preconditioner $\vec{B}_{\ell}^{\MG}$ from~\eqref{eq: MG preconditioner}.

\begin{proposition}[Multigrid functional/matrix representation]\label{lemma: connection between MG operator and matrix}
	Let $\SS_{\ell}^{\MG}$ be the multiplicative Schwarz operator defined in~\eqref{eq: mulitplicative schwarz operator} and $\vec{B}_{\ell}^{\MG}$ the multigrid preconditioner defined in~\eqref{eq: MG preconditioner}. 
	Then, it holds that
	\begin{equation}\label{eq: mulitplicative operator to matrix}
		\edual{\SS_{\ell}^{\MG}v_{\ell}}{w_{\ell}} = (\vec{B}_{\ell}^{\MG}[\vec{A}_{\ell}\vec{x}_{\ell}], \vec{y}_{\ell})_{\vec{A}_{\ell}} \quad \text{for all } v_{\ell},w_{\ell} \in \XX_{\ell}^p \text{ and } \vec{x}_{\ell} = \chi^p_{\ell}(v_{\ell}), \, \vec{y}_{\ell} = \chi^p_{\ell}(w_{\ell}).
	\end{equation} 
	Moreover, this implies
	\begin{equation}\label{eq: vector equality}
		\chi^p_{\ell}(\SS_{\ell}^{\MG}v_{\ell}) = \vec{B}_{\ell}^{\MG}[\vec{A}_{\ell}\chi^p_{\ell}(v_{\ell})] \quad \text{for all } v_{\ell} \in \XX_{\ell}^p.
	\end{equation}
	Lastly, for the update $\sigma_{\ell} = \Phi_{\ell}(u_{\ell})-u_{\ell}$ defined in Algorithm~\ref{algo: geometric MG}\eqref{eq: MG step 3}, we have
	\begin{equation}\label{eq: schwarz operator and MG}
		\SS_\ell^{\MG}(u_{\ell}^{\star} - u_{\ell}) = \sigma_{\ell} \quad \text{for all } u_{\ell} \in \XX_{\ell}^p.
	\end{equation}
\end{proposition}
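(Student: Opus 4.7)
The plan is to prove the three identities in the order~\eqref{eq: vector equality}, then~\eqref{eq: mulitplicative operator to matrix}, then~\eqref{eq: schwarz operator and MG}, leveraging the matrix-form identity~\eqref{eq: MG preconditioner application} already established in Section~\ref{sec: induced MG preconditioner} together with Lemma~\ref{lemma: levelwise operator and Matrix}. The only conceptual work is a levelwise intertwining of $\chi^p_\ell$ with the smoothers $(I-\lambda_{\ell'}\SS_{\ell'})$ and $(\vec{I}-\lambda_{\ell'}\vec{S}_{\ell'})$; everything else is telescoping.

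First I would upgrade Lemma~\ref{lemma: levelwise operator and Matrix} from a bilinear-form identity to a vector identity. Since the definition~\eqref{eq: Galerkin matrix} of $\vec{A}_\ell$ gives $\edual{v_\ell}{w_\ell}=(\chi^p_\ell[v_\ell],\chi^p_\ell[w_\ell])_{\vec{A}_\ell}$ for all $v_\ell,w_\ell \in \XX_\ell^p$, and $\vec{A}_\ell$ is SPD so that $\vec{y}_\ell=\chi^p_\ell[w_\ell]$ ranges over all of $\R^{N_\ell^p}$, the identity~\eqref{eq: levelwise operator and Matrix} is equivalent to $\chi^p_\ell[\SS_{\ell'} v_\ell]=\vec{S}_{\ell'}\chi^p_\ell[v_\ell]$ for every $\ell'\in\{0,\ldots,\ell\}$ and every $v_\ell\in\XX_\ell^p$.

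Next I would prove~\eqref{eq: vector equality}. Fix $v_\ell\in\XX_\ell^p$ and set $\vec{x}_\ell=\chi^p_\ell[v_\ell]$; then all step-sizes $\lambda_{\ell'}=\lambda_{\ell'}[\vec{A}_\ell\vec{x}_\ell]$ are fixed scalars depending only on $v_\ell$. Linearity of $\chi^p_\ell$ together with the vector form of the previous step yields the levelwise intertwining $\chi^p_\ell[(I-\lambda_{\ell'}\SS_{\ell'})v_\ell]=(\vec{I}-\lambda_{\ell'}\vec{S}_{\ell'})\vec{x}_\ell$. Iterating this identity across $\ell'=0,\ldots,\ell$ transfers the operator product in the definition~\eqref{eq: mulitplicative schwarz operator} of $\SS_\ell^{\MG}$ into its matrix analogue, and combined with~\eqref{eq: MG preconditioner application} gives
\begin{equation*}
    \chi^p_\ell[\SS_\ell^{\MG} v_\ell] \;=\; \Bigl(\vec{I}-\prod_{\ell'=0}^{\ell}(\vec{I}-\lambda_{\ell'}\vec{S}_{\ell'})\Bigr)\vec{x}_\ell \;=\; \B_\ell^{\MG}[\vec{A}_\ell\vec{x}_\ell],
\end{equation*}
which is precisely~\eqref{eq: vector equality}. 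Taking the $\vec{A}_\ell$-inner product with $\vec{y}_\ell=\chi^p_\ell[w_\ell]$ and invoking $\edual{\SS_\ell^{\MG} v_\ell}{w_\ell}=(\chi^p_\ell[\SS_\ell^{\MG} v_\ell],\vec{y}_\ell)_{\vec{A}_\ell}$ then produces~\eqref{eq: mulitplicative operator to matrix}.

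Finally, for~\eqref{eq: schwarz operator and MG}, Algorithm~\ref{algo: geometric MG} directly gives $\sigma_\ell=\sum_{\ell'=0}^\ell\lambda_{\ell'}\rho_{\ell'}$, and with $\vec{s}_{\ell'}=\chi^p_\ell[\rho_{\ell'}]$ the update formula~\eqref{eq: MG update} reads $\chi^p_\ell[\sigma_\ell]=\sum_{\ell'=0}^\ell \lambda_{\ell'}[\vec{r}_\ell]\,\B_{\ell'}[\vec{r}_\ell]=\B_\ell^{\MG}[\vec{r}_\ell]$, where $\vec{r}_\ell=\vec{b}_\ell-\vec{A}_\ell\chi^p_\ell[u_\ell]=\vec{A}_\ell\chi^p_\ell[u_\ell^\star-u_\ell]$. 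Applying~\eqref{eq: vector equality} with $v_\ell=u_\ell^\star-u_\ell$ and using injectivity of $\chi^p_\ell$ then yields $\sigma_\ell=\SS_\ell^{\MG}(u_\ell^\star-u_\ell)$. The only delicate point is verifying that the step-sizes $\lambda_{\ell'}[\vec{r}_\ell]$ computed sequentially in Algorithm~\ref{algo: geometric MG} coincide with those appearing in~\eqref{eq: mulitplicative schwarz operator} evaluated at $v_\ell=u_\ell^\star-u_\ell$; this is immediate from the Remark following Algorithm~\ref{algo: geometric MG}, and once checked the proof reduces to bookkeeping prepared in Section~\ref{sec: induced MG preconditioner}.
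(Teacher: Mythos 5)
Your proof is correct and follows essentially the same strategy as the paper's: reduce everything to Lemma~\ref{lemma: levelwise operator and Matrix} and then transfer the telescoping product $\prod(I-\lambda_{\ell'}\SS_{\ell'})$ to its matrix analogue $\prod(\vec{I}-\lambda_{\ell'}\vec{S}_{\ell'})$, with~\eqref{eq: schwarz operator and MG} then falling out of~\eqref{eq: MG update}. The one genuine (if minor) organizational difference: the paper first proves the bilinear identity~\eqref{eq: mulitplicative operator to matrix} by induction on the number of factors, an argument that needs the $\edual{\cdot}{\cdot}$-symmetry of the levelwise operators $\SS_{\ell'}$ to shuffle factors from the test function to the trial function, and only afterwards deduces the vector identity~\eqref{eq: vector equality} by varying $\vec{y}_\ell$. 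You instead upgrade Lemma~\ref{lemma: levelwise operator and Matrix} directly to the operator identity $\chi^p_\ell[\SS_{\ell'}\,\cdot\,] = \vec{S}_{\ell'}\chi^p_\ell[\,\cdot\,]$ and iterate at the vector level, which sidesteps the symmetry argument entirely and makes the composition step trivial; the bilinear form version is then an immediate corollary. This reversal is a small but real simplification. One point worth stating explicitly in a polished write-up: the intertwining $\chi^p_\ell[(I-\lambda\,\SS_{\ell'})u] = (\vec{I}-\lambda\,\vec{S}_{\ell'})\chi^p_\ell[u]$ must hold for \emph{arbitrary} $u\in\XX_\ell^p$, not just the original $v_\ell$, since the iterates $(I-\lambda_{\ell'-1}\SS_{\ell'-1})\cdots(I-\lambda_0\SS_0)v_\ell$ change at each step; your universal quantifier in the upgraded lemma covers this, but your iteration sentence phrases it only for $v_\ell$, and a careful reader might stumble there.
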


\begin{proof}[\textbf{Proof}]
	Let $\ell \in \N_0$ be fixed.
	Let $v_{\ell},w_{\ell} \in \XX_{\ell}^p$ and $\vec{x}_{\ell} = \chi^p_{\ell}(v_{\ell})$, $\vec{y}_{\ell} = \chi^p_{\ell}(w_{\ell})$.
	We show that
	\begin{equation}\label{eq: product translation}
		\edualbig{\prod_{\ell'= \ell}^{0}(I-\lambda_{\ell'}[\vec{A}_{\ell}\chi^p_{\ell}(v_{\ell})]\SS_{\ell'})v_{\ell}}{{w_{\ell}}} = \Big(\prod_{\ell'= \ell}^{0}(\vec{I}-\lambda_{\ell'}[\vec{A_{\ell}x}_{\ell}]\vec{S}_{\ell'})\vec{x}_{\ell}, \vec{y}_{\ell} \Big)_{\vec{A}_{\ell}} 
	\end{equation}
	by induction on $\ell'$, i.e., the induction hypothesis reads as
	\begin{equation}\label{eq: induction hypothesis}
		\edualbig{\prod_{i=\ell'}^{0}(I-\lambda_{i}[\vec{A}_{\ell}\chi^p_{\ell}(v_{\ell})]\SS_{i})v_{\ell}}{w_{\ell}} = \Big(\prod_{i=\ell'}^{0}(\vec{I}-\lambda_{i}[\vec{A_{\ell}x}_{\ell}]\vec{S}_{i})\vec{x}_{\ell}, \vec{y}_{\ell} \Big)_{\vec{A}_{\ell}} 
	\end{equation}
	for $\ell' \in \{0, \dots, \ell-1\}$.
	From Lemma~\ref{lemma: levelwise operator and Matrix}, we immediately obtain the base case $\ell' = 0$.
	Since the operators $\SS_{\ell'+1}$ from~\eqref{eq: levelwise operators} and $\vec{S}_{\ell'+1}$ from~\eqref{eq: levelwise matrices} are symmetric with respect to the scalar products $\edual{\cdot}{\!\cdot}$ and $(\cdot, \cdot)_{\vec{A}_{\ell}}$, respectively, it follows from the induction hypothesis~\eqref{eq: induction hypothesis} that
	\begin{align*}
		&\edualbig{\prod_{i= \ell' +1}^{0}(I-\lambda_{i}[\vec{A}_{\ell}\chi^p_{\ell}(v_{\ell})]\SS_{i})v_{\ell}}{w_{\ell}} \\
		&\qquad = \, \edualbig{\prod_{i= \ell'}^{0}(I-\lambda_{i}[\vec{A}_{\ell}\chi^p_{\ell}(v_{\ell})]\SS_{i}) v_{\ell}}{(I-\lambda_{\ell'+1}[\vec{A}_{\ell}\chi^p_{\ell}(v_{\ell})]\SS_{\ell'+1})w_{\ell}} \\
		& \qquad \eqreff*{eq: induction hypothesis}{=} \, \Big( \prod_{i= \ell'}^{0}(\vec{I}-\lambda_{i}[\vec{A_{\ell}x}_{\ell}]\vec{S}_{i}) \vec{x}_{\ell}, \widetilde{\vec{y}}_{\ell} \Big)_{\vec{A}_{\ell}},
	\end{align*}
	where $\widetilde{\vec{y}}_{\ell} = \chi^p_{\ell}\big((I-\lambda_{\ell'+1}[\vec{A}_{\ell}\chi^p_{\ell}(v_{\ell})]\SS_{\ell'+1})w_{\ell}\big)$.
	Let $\widetilde{v}_{\ell} \in \XX_{\ell}^p$ denote the discrete function such that $\prod_{i= \ell'}^{0}(\vec{I}-\lambda_{i}[\vec{A_{\ell}x}_{\ell}]\vec{S}_{i}) \vec{x}_{\ell} = \chi^p_{\ell}(\widetilde{v}_{\ell})$. 
	Utilizing the connection~\eqref{eq: levelwise operator and Matrix} and the symmetry of $\vec{S}_{\ell'+1}$, we get
	\begin{align*}
		\Big( \prod_{i=\ell'}^{0}(\vec{I}-\lambda_{i}[\vec{A_{\ell}x_{\ell}}]\vec{S}_{i}) \vec{x}_{\ell}, \widetilde{\vec{y}}_{\ell}\Big)_{\vec{A}_{\ell}} &= \:\edual{\widetilde{v}_{\ell}}{(I-\lambda_{\ell'+1}[\vec{A}_{\ell}\chi^p_{\ell}(v_{\ell})]\SS_{\ell'+1})w_{\ell}} \\
		&\overset{\mathclap{\eqref{eq: levelwise operator and Matrix}}}{=} \: \Big(\prod_{i=\ell'}^{0}(\vec{I}-\lambda_{i}[\vec{A_{\ell}x_{\ell}}]\vec{S}_{i})\vec{x}_{\ell}, (\vec{I}-\lambda_{\ell'+1}[\vec{A}_{\ell}\vec{x}_{\ell}]\vec{S}_{\ell'+1}) \vec{y}_{\ell} \Big)_{\vec{A}_{\ell}} \\
		&= \:\Big(\prod_{i= \ell' + 1}^{0}(\vec{I}-\lambda_{i}[\vec{A_{\ell}x_{\ell}}]\vec{S}_{i})\vec{x}_{\ell}, \vec{y}_{\ell} \Big)_{\vec{A}_{\ell}}.
	\end{align*}
This concludes the induction step and thus proves~\eqref{eq: product translation}.
	Since $\edual{v_{\ell}}{w_{\ell}} = (\vec{x}_{\ell},\vec{y}_{\ell})_{\vec{A}_{\ell}}$, we obtain
	\begin{align*}
		\edual{\SS_{\ell}^{\MG}v_{\ell}}{w_{\ell}} &\, \eqreff*{eq: mulitplicative schwarz operator}{=} \, \edual{v_{\ell}}{w_{\ell}} - \edualbig{\prod_{\ell'= \ell}^{0}(I-\lambda_{\ell'}[\vec{A}_{\ell}\chi^p_{\ell}(v_{\ell})]\SS_{\ell'})v_{\ell}}{w_{\ell}} \\
		&\, \eqreff*{eq: product translation}{=} \, (\vec{x}_{\ell},\vec{y}_{\ell})_{\vec{A}_{\ell}} - \Big(\prod_{\ell'= \ell}^{0}(\vec{I}-\lambda_{\ell'}[\vec{A_{\ell}x_{\ell}}]\vec{S}_{\ell'})\vec{x}_{\ell}, \vec{y}_{\ell} \Big)_{\vec{A}_{\ell}} \, \eqreff*{eq: MG preconditioner}{=} \, (\vec{B}_{\ell}^{\MG}[\vec{A}_{\ell}\vec{x}_{\ell}], \vec{y}_{\ell})_{\vec{A}_{\ell}}.
	\end{align*}
	From~\eqref{eq: mulitplicative operator to matrix}, we immediately have
	\begin{equation*}
		(\chi^p_{\ell}(\SS_{\ell}^{\MG}v_{\ell}), \vec{y}_{\ell})_{\vec{A}_{\ell}} = \edual{\SS_{\ell}^{\MG}v_{\ell}}{w_{\ell}} \overset{\eqref{eq: mulitplicative operator to matrix}}{=} (\vec{B}_{\ell}^{\MG}[\vec{A}_{\ell}\vec{x}_{\ell}], \vec{y}_{\ell})_{\vec{A}_{\ell}}.
	\end{equation*}
	Since this holds for every $\vec{y}_{\ell} \in \R^{N_{\ell}^p}$, we obtain~\eqref{eq: vector equality}.
	Let $u_{\ell} \in \XX_{\ell}^p$, $\sigma_{\ell} = \Phi_{\ell}(u_{\ell})-u_{\ell}$ be the update constructed by Algorithm~\ref{algo: geometric MG}, and $\vec{s}_{\ell}  = \chi^p_{\ell}(\sigma_{\ell})$. In~\eqref{eq: MG update}, we derived that $\vec{s}_{\ell} = \vec{B}_{\ell}^{\MG}[\vec{r}_{\ell}]$, where $\vec{r}_{\ell} = \vec{A}_{\ell}(\vec{x}_{\ell}^{\star}-\vec{x}_{\ell})$, with $\vec{x}_{\ell}^{\star} = \chi_{\ell}^p(u_{\ell}^{\star})$ and $\vec{x}_{\ell} = \chi_{\ell}^p(u_{\ell})$.
	Finally,~\eqref{eq: vector equality} implies $\vec{B}_{\ell}^{\MG}[\vec{A}_{\ell}(\vec{x}_{\ell}^{\star}-\vec{x}_{\ell})]= \chi^p_{\ell}(\SS_{\ell}^{\MG}(u_{\ell}^{\star}-u_{\ell}))$ and hence $\SS_{\ell}^{\MG}(u_{\ell}^{\star}-u_{\ell}) = \sigma_{\ell}$.
	This concludes the proof.
\end{proof}

Recall the following result on Algorithm~\ref{algo: geometric MG} from~\cite[Theorem 2.5]{IMPS24}, where the local dependence on the diffusion coefficient is proved in~\cite{Pau25}.

\begin{proposition}[Robust contraction of multigrid~\cite{IMPS24,Pau25}]\label{lemma: MG contraction}
	Let $u_{\ell}^{\star} \in \XX_{\ell}^p$ be the solution of~\eqref{eq: discrete problem} and $u_{\ell} \in \XX_{\ell}^p$ an approximation. 
	Then, there exists a constant $q \in (0,1)$ such that
	\begin{equation}\label{eq: MG contraction}
		\enorm{u_{\ell}^{\star}-\Phi_{\ell}(u_{\ell})} \leq q \, \enorm{u_{\ell}^{\star}- u_{\ell}} \quad \text{for all } u_{\ell} \in \XX_{\ell}^p.
	\end{equation}	
	The factor $q$ depends only on the space dimension $d$, the initial mesh $\TT_0$, the $\gamma$-shape regularity~\eqref{eq: shape regularity}, and the local constants $\Cloc$ and $\CLoc$, which are defined as
	\begin{equation}\label{eq: local constant 2}
		\Cloc \coloneqq \max \Big\{\sup_{z_0 \in \VV_0}\frac{\max_{T \subseteq \overline{\omega_{0}^2(z_0)}}\|\div(\vec{K})\|_{L^{\infty}(T)}}{\inf_{y \in \omega_{0}^2(z_0)}\lambda_{\min}(\vec{K}(y))}, \sup_{z_0 \in \VV_0} \frac{\sup_{y \in \omega_{0}^2(z_0)} \lambda_{\max}(\vec{K}(y))}{\inf_{y \in \omega_{0}^2(z_0)}\lambda_{\min}(\vec{K}(y))}\Big\}.
	\end{equation}
	and
	\begin{equation}\label{eq: local constant 1}
		\CLoc \coloneqq \sup_{z \in \VV_0} \frac{\sup_{y \in \omega_0^3(z)} \lambda_{\max}(\vec{K}(y))}{\inf_{y \in \omega_0^3(z)}\lambda_{\min}(\vec{K}(y))}. \qed
	\end{equation}
\end{proposition}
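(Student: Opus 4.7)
The plan is to reduce the contraction of $\Phi_{\ell}$ to a norm bound on the multiplicative Schwarz operator $\SS_{\ell}^{\MG}$ and then exploit a stable subspace decomposition together with a strengthened Cauchy--Schwarz inequality. By identity~\eqref{eq: schwarz operator and MG}, we have $u_{\ell}^{\star}-\Phi_{\ell}(u_{\ell}) = (I - \SS_{\ell}^{\MG})(u_{\ell}^{\star}-u_{\ell})$, so~\eqref{eq: MG contraction} is equivalent to the operator bound $\enorm{(I-\SS_{\ell}^{\MG})e_{\ell}} \leq q\,\enorm{e_{\ell}}$ for all $e_{\ell} \in \XX_{\ell}^p$.

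First I would set $e^{(0)} \coloneqq e_{\ell}$ and $e^{(\ell'+1)} \coloneqq (I-\lambda_{\ell'}\SS_{\ell'})e^{(\ell')}$ so that $(I-\SS_{\ell}^{\MG})e_{\ell}=e^{(\ell+1)}$, and then telescope
\begin{equation*}
\enorm{e_{\ell}}^2 - \enorm{(I-\SS_{\ell}^{\MG})e_{\ell}}^2 = \sum_{\ell'=0}^{\ell}\bigl(2\lambda_{\ell'}\edual{\SS_{\ell'}e^{(\ell')}}{e^{(\ell')}} - \lambda_{\ell'}^2\enorm{\SS_{\ell'}e^{(\ell')}}^2\bigr).
\end{equation*}
The defining formulas for $\lambda_{\ell'}$ in Algorithm~\ref{algo: geometric MG} are exactly those that render each summand nonnegative: on the finest level, $\lambda_{\ell}$ is the unrestricted minimiser, while on intermediate levels the truncation at $(d+1)^{-1}$ enters when the optimal step would exceed $d+1$. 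A case distinction on the two regimes yields a uniform per-level lower bound of the form $c\,\edual{\SS_{\ell'}e^{(\ell')}}{e^{(\ell')}}$ (after absorbing the $\enorm{\SS_{\ell'}e^{(\ell')}}^2$ term using that $\SS_{\ell'}$ is a sum of finitely overlapping local Galerkin projections, which is $\gamma$-shape-regularly bounded).

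The core tools are then the $h$- and $p$-robust stable decomposition from~\cite{SMPZ08,WZ17} in the refined form of~\cite{Pau25}, providing for every $v\in\XX_{\ell}^p$ a decomposition $v = v_0 + \sum_{\ell'=1}^{\ell-1}\sum_{z\in\VV_{\ell'}^+} v_{\ell',z} + \sum_{z\in\VV_{\ell}} v_{\ell,z}$ with
\begin{equation*}
\enorm{v_0}^2 + \sum_{\ell'=1}^{\ell-1}\sum_{z\in\VV_{\ell'}^+}\enorm{v_{\ell',z}}^2 + \sum_{z\in\VV_{\ell}}\enorm{v_{\ell,z}}^2 \leq \Cscs^{\,2}\,\enorm{v}^2,
\end{equation*}
with $\Cscs$ depending only on $\CLoc$, $d$, $\TT_0$, and $\gamma$; and the strengthened Cauchy--Schwarz inequality from~\cite{HWZ12,Pau25}, bounding $|\edual{w_i}{w_j}|$ for $w_i$ supported on level-$i$ patches by a geometric factor in $|i-j|$ with constant controlled by $\Cloc$. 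Applied to $v = e^{(\ell')}$, the stable decomposition produces a lower bound of the form $\sum_{\ell'}\edual{\SS_{\ell'}e^{(\ell')}}{e^{(\ell')}} \geq c'\,\enorm{e^{(\ell')}}^2$ after discarding mixed-level cross terms by the strengthened Cauchy--Schwarz inequality.

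The step I expect to be the main obstacle is the propagation from $e^{(\ell')}$ back to $e^{(0)}=e_{\ell}$: since each $e^{(\ell')}$ arises from a \emph{nonlinear} recursion (the $\lambda_{\ell'}[\cdot]$ depend on the current residual), standard XZ-type identities do not apply directly. The remedy is to verify monotonicity $\enorm{e^{(\ell'+1)}}\leq\enorm{e^{(\ell')}}$ from the sign of the per-level summand, and to combine this with the stable decomposition applied at the coarsest scale where $e^{(0)}$ itself appears, using the SCS inequality to replace the $e^{(\ell')}$ on the right-hand side by $e^{(0)}$ modulo controllable cross-terms. Collecting all constants yields $q = q(d,\TT_0,\gamma,\Cloc,\CLoc) \in (0,1)$, and the refined local (rather than global) dependence on the diffusion is inherited verbatim from the sharpened stability and SCS constants of~\cite{Pau25}.
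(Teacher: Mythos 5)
The paper itself does not prove this proposition: it is quoted directly from~\cite[Theorem 2.5]{IMPS24}, with the refined dependence on the local constants $\Cloc$ and $\CLoc$ (rather than global diffusion contrast) attributed to~\cite{Pau25}, and is accordingly closed with \textnormal{\qedsymbol} rather than a proof. Your sketch therefore attempts to reconstruct the proof of a cited result, and its overall skeleton does match the strategy of~\cite{IMPS24}: reduce~\eqref{eq: MG contraction} via~\eqref{eq: schwarz operator and MG} to a norm bound on $I-\SS_{\ell}^{\MG}$, telescope the energy across levels using the (quasi-)optimal step sizes, control $\enorm{\SS_{\ell'}e^{(\ell')}}^2 \leq (d+1)\,\edual{\SS_{\ell'}e^{(\ell')}}{e^{(\ell')}}$ by finite patch overlap, and then invoke the $hp$-robust stable decomposition together with a strengthened Cauchy--Schwarz inequality to obtain a uniform lower bound on the accumulated energy decrease.

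The genuine gap lies precisely where you flag it. Your claim that the stable decomposition ``produces a lower bound of the form $\sum_{\ell'}\edual{\SS_{\ell'}e^{(\ell')}}{e^{(\ell')}} \geq c'\,\enorm{e^{(0)}}^2$ after discarding mixed-level cross terms'' hides the entire technical heart of the argument. The stable decomposition is applied to the \emph{fixed} initial error $e^{(0)}$, not to each $e^{(\ell')}$, and the pairing $\enorm{e^{(0)}}^2 = \sum_{\ell',z}\edual{e^{(0)}}{v_{\ell',z}}$ must then be rewritten level-by-level as $\edual{e^{(\ell')}}{v_{\ell',z}}$ plus a correction $\edual{\sum_{i<\ell'}\lambda_i\rho_i}{v_{\ell',z}}$; it is \emph{these} correction terms, not the ``mixed-level cross terms'' in $\edual{\SS_{\ell'}e^{(\ell')}}{e^{(\ell')}}$, that the strengthened Cauchy--Schwarz inequality controls, and the resulting double sum over levels must be absorbed by summing a geometric series in the SCS decay factor $\delta$. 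Your proposed ``remedy'' --- monotonicity of $\enorm{e^{(\ell')}}$ plus vaguely ``replacing $e^{(\ell')}$ by $e^{(0)}$ modulo cross-terms'' --- gestures at this but does not supply the accounting identity or the absorption argument, and without it the proof does not close. A further minor issue: the stable-decomposition constant is not $\Cscs$ (the paper reserves $\Cscs$ for the SCS inequality, with dependence on $\TT_0$, $d$, $\gamma$, and $\Cqu$); the decomposition constant depends additionally on $\CLoc$, and conflating the two obscures which constant carries the local-diffusion dependence.
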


\begin{remark}
	Note that the analytical contraction factors $q$ in Proposition~\ref{lemma: MG contraction} for the standalone geometric MG and in Theorem~\ref{theorem: main result} for GPCG with non-linear and non-symmetric MG preconditioner are the same, i.e., GPCG can only improve contraction in practice. 
	Indeed this is what we observe in our numerical experiments; see Section~\ref{sec: numerics}.
\end{remark}

\begin{remark}\label{remark: linearized MG}
	Instead of the optimal step-sizes $\lambda_{\ell'}$ defined in Algorithm~\ref{algo: geometric MG}\eqref{eq: MG step 2}, one can alternatively use the fixed step-size $\lambda \coloneqq (d+1)^{-1}$ for $\ell' \geq 1$ in Algorithm~\ref{algo: geometric MG}.
	In this case the multigrid preconditioner $\vec{B}_{\ell}^{\MG}$ becomes linear and we denote it with $\vec{B}_{\ell}^{\LMG}$. 
	All results of this section remain valid.
	In particular, Theorem~\ref{theorem: general result} implies that $\vec{B}_{\ell}^{\LMG}$ fulfills assumption~\eqref{eq: preconditioner assumption}, i.e.,
	\begin{equation}\label{eq: linearized MG contracion}
		|(\vec{I}-\vec{B}_{\ell}^{\LMG}\vec{A}_{\ell})\vec{x}_{\ell}|_{\vec{A}_{\ell}} \leq q \, |\vec{x}_{\ell}|_{\vec{A}_{\ell}} \quad \text{for all } \vec{x}_{\ell} \in \R^{N_{\ell}^p},
	\end{equation}
	 where $\vec{B}_{\ell}^{\LMG}\vec{A}_{\ell} = \vec{I} - \big(\prod_{\ell'=\ell}^{1} (\vec{I}-\lambda \, \vec{S}_{\ell'}) \big) (\vec{I}-\vec{S}_0)$ and $q \in (0,1)$ is the constant from~\eqref{eq: MG contraction}.
\end{remark}

\subsection{Proof of Theorem~\ref{theorem: main result}}\label{subsec: prove GPCG MG}
	We show that the multigrid preconditioner $\vec{B}_{\ell}^{\MG}$ fulfills the assumption of Theorem~\ref{theorem: general result}, i.e., contraction~\eqref{eq: contraction of operator} of the operator $\SS_{\ell}^{\MG}$.
	Let $u_{\ell} \in \XX_{\ell}^p$.
	With~\eqref{eq: schwarz operator and MG}, we can rewrite the estimate~\eqref{eq: MG contraction} as
	\begin{equation*}
		\enorm{(I- \SS_{\ell}^{\MG})(u_{\ell}^{\star}-u_{\ell})}  = \enorm{u_{\ell}^{\star}-(u_{\ell}+\sigma_{\ell})}= \enorm{u_{\ell}^{\star}-\Phi_{\ell}(u_{\ell})} \eqreff{eq: MG contraction}{\leq} q \, \enorm{u_{\ell}^{\star}-u_{\ell}}.
	\end{equation*}
	Since this holds for all $u_{\ell} \in \XX_{\ell}^p$, we also have
	\begin{equation*}
		\enorm{(I- \SS_{\ell}^{\MG})v_{\ell}} \leq q \, \enorm{v_{\ell}} \quad \text{for all } v_{\ell} \in \XX_{\ell}^p.
	\end{equation*}
	Applying Theorem~\ref{theorem: general result} concludes the proof.
	\qed

%%%%%%%%%%%%%%%%%%%%%%%%%%%%%%%%%%%%%%%%%%%%%%%%%%%%%%%%%%%%%%%%%%%%%%%%%%%%%%%%%%%
%%%%%%%%%%%%%%%%%%%%%%%%%%%%%%%%%%%%%%%%%%%%%%%%%%%%%%%%%%%%%%%%%%%%%%%%%%%%%%%%%%%
\section{Optimal linear and symmetric multigrid preconditioner}\label{sec: symmetric MG}
%%%%%%%%%%%%%%%%%%%%%%%%%%%%%%%%%%%%%%%%%%%%%%%%%%%%%%%%%%%%%%%%%%%%%%%%%%%%%%%%%%%
%%%%%%%%%%%%%%%%%%%%%%%%%%%%%%%%%%%%%%%%%%%%%%%%%%%%%%%%%%%%%%%%%%%%%%%%%%%%%%%%%%%

In this section, we formulate and analyze a linear and symmetric multigrid preconditioner $\vec{B}^{\SMG}_{\ell}$.
First, we linearize the multigrid of~\cite{IMPS24} by replacing the optimal step-sizes $\lambda_{\ell'}$ with the constant $\lambda = (d+1)^{-1}$ for all $\ell' \in \{1, \ldots, \ell\}$, as already described in Remark~\ref{remark: linearized MG}.
Second, we symmetrize the approach by adding suitable pre-smoothing steps to Algorithm~\ref{algo: geometric MG}.

\subsection{Preconditioner and corresponding main result}
We build on the notation already introduced in Section~\ref{sec: GPCG}.
The additional superscript is introduced to distinguish algorithmically the pre-smoothing ($\downarrow$) and post-smoothing ($\uparrow$) in the V-cycle.

\begin{algorithm}[V-cycle of symmetric and linear multigrid method]\label{algo: symmetric MG}
	\textbf{Input:} Current approximation $u_{\ell} \in \XX_{\ell}^p$, triangulations $\{\TT_{\ell'}\}_{\ell'=0}^\ell$, and polynomial degree $p \geq 1$. \\
	Follow the steps~\eqref{item: sMG step 1}--\eqref{item: sMG step 4}:
	\begin{enumerate}[label=\textnormal{(\roman*)}, ref = \textnormal{\roman*}]
		\item \label{item: sMG step 1}\textbf{High-order correction:} For all $z \in \VV_{\ell}$, compute $\rho^{\downarrow}_{\ell,z} \in \XX_{\ell,z}^p$ such that
		\begin{equation*}
			\edual{\rho^{\downarrow}_{\ell,z}}{v_{\ell,z}} = R_{\ell}(v_{\ell,z}) \quad \text{for all } v_{\ell,z} \in \XX_{\ell,z}^p.
		\end{equation*}
		Define $\rho^{\downarrow}_{\ell} \coloneqq \sum_{z \in \VV_{\ell}} \rho^{\downarrow}_{\ell,z}$ and $\sigma^{\downarrow}_{\ell} \coloneqq \lambda \rho^{\downarrow}_{\ell} = (d+1)^{-1} \rho^{\downarrow}_{\ell}$.

		\item \label{item: sMG step 2}\textbf{Lowest-order correction:} For all intermediate levels $\ell' = \ell -1, \dots, 1$ and all $z \in \VV_{\ell'}^+$, compute $\rho^{\downarrow}_{\ell',z} \in \XX_{\ell',z}^1$ such that
		\begin{equation*}
			\edual{\rho^{\downarrow}_{\ell',z}}{v_{\ell',z}} = R_{\ell}(v_{\ell',z}) - \edual{\sigma^{\downarrow}_{\ell'+1}}{v_{\ell',z}} \quad \text{for all } v_{\ell',z} \in \XX_{\ell',z}^1.
		\end{equation*}
		Define $\rho^{\downarrow}_{\ell'} \coloneqq \sum_{z \in \VV_{\ell'}^+} \rho^{\downarrow}_{\ell',z}$ and $\sigma^{\downarrow}_{\ell'} \coloneqq \sigma^{\downarrow}_{\ell'+1} + \lambda \rho^{\downarrow}_{\ell'} = \sigma^{\downarrow}_{\ell'+1} + (d+1)^{-1} \rho^{\downarrow}_{\ell'}$.

		\item \label{item: sMG step 3}\textbf{Coarse level solve:}  Compute $\rho^{\uparrow}_0 \in \XX_0^1$ such that 
		\begin{equation}
			\edual{\rho^{\uparrow}_0}{v_0} = R_{\ell}(v_0) - \edual{\sigma_1^{\downarrow}}{v_0}\quad \text{for all } v_0 \in \XX_0^1. 
		\end{equation}
		Define $\sigma^{\uparrow}_0 \coloneqq \sigma^{\downarrow}_1 + \rho^{\uparrow}_0$.

		\item \textbf{Lowest-order correction:} For all intermediate levels $\ell' = 1, \dots, \ell-1$ and all $z \in \VV_{\ell'}^+$, compute $\rho^{\uparrow}_{\ell',z} \in \XX_{\ell',z}^1$ such that
		\begin{equation}
			\edual{\rho^{\uparrow}_{\ell',z}}{v_{\ell',z}} = R_{\ell}(v_{\ell',z}) - \edual{\sigma^{\uparrow}_{\ell'-1}}{v_{\ell',z}} \quad \text{for all } v_{\ell',z} \in \XX_{\ell',z}^1.
		\end{equation}
		Define $\rho^{\uparrow}_{\ell'} \coloneqq \sum_{z \in \VV_{\ell'}^+} \rho^{\uparrow}_{\ell',z}$ and $\sigma^{\uparrow}_{\ell'} \coloneqq \sigma^{\uparrow}_{\ell'-1} + \lambda \rho^{\uparrow}_{\ell'} = \sigma^{\uparrow}_{\ell'-1} + (d+1)^{-1} \rho^{\uparrow}_{\ell'}$.

		\item \label{item: sMG step 4}\textbf{High-order correction:} For all $z \in \VV_{\ell}$, compute $\rho^{\uparrow}_{\ell,z} \in \XX_{\ell,z}^p$ such that
		\begin{equation*}
			\edual{\rho^{\uparrow}_{\ell,z}}{v_{\ell,z}} = R_{\ell}(v_{\ell,z}) - \edual{\sigma^{\uparrow}_{\ell-1}}{v_{\ell,z}} \quad \text{for all } v_{\ell,z} \in \XX_{\ell,z}^p.
		\end{equation*}
		Define $\rho^{\uparrow}_{\ell} \coloneqq \sum_{z \in \VV_{\ell}} \rho^{\uparrow}_{\ell,z}$ and $\sigma_{\ell} \coloneqq\sigma^{\uparrow}_{\ell} \coloneqq \sigma^{\uparrow}_{\ell-1} + \lambda \rho^{\uparrow}_{\ell} = \sigma^{\uparrow}_{\ell-1} + (d+1)^{-1} \rho^{\uparrow}_{\ell}$.
	\end{enumerate}
	\textbf{Output:} Improved approximation $\widetilde{\Phi}_{\ell}(u_{\ell}) \coloneqq u_{\ell} + \sigma_{\ell}$.
\end{algorithm}

\begin{remark}[Computational complexity of Algorithm~\ref{algo: symmetric MG}]\label{remark: cost sMG}
	With the same arguments as in Remark~\ref{rem: complexity of geometric MG}, we obtain that the overall computational cost of Algorithm~\ref{algo: symmetric MG} is of order $\OO(\# \TT_{\ell})$.
	Note that the calculation of the optimal step-sizes $\lambda_{\ell'}$ is not applicable in Algorithm~\ref{algo: symmetric MG}.
\end{remark}

Similarly to~\eqref{eq: levelwise preconditioner} in Section~\ref{sec: geometric multigrid}, we define the matrices $\vec{B}^{\downarrow}_{\ell'}$ for $\ell' \in \{1,\dots, \ell\}$ and $\vec{B}^{\uparrow}_{\ell'}$ for $\ell' \in \{0,\dots, \ell\}$ by
\begin{align*}
	\vec{B}^{\downarrow}_{\ell'} &\coloneqq \vec{I}^+_{\ell'}(\vec{D}_{\ell'}^+)^{-1}(\vec{I}^+_{\ell'})^{T} \prod_{i=\ell'+1}^{\ell}(\vec{I}-\lambda \, \vec{A}_{\ell}\vec{I}_{i}^+(\vec{D}_{i}^+)^{-1}(\vec{I}_i^+)^{T}), \\
	\vec{B}^{\uparrow}_{0} &\coloneqq \vec{I}^+_{0}(\vec{A}_0)^{-1}(\vec{I}^+_{0})^{T} \prod_{i=1}^{\ell}(\vec{I}-\lambda \, \vec{A}_{\ell}\vec{I}_{i}^+(\vec{D}_{i}^+)^{-1}(\vec{I}_i^+)^{T}) \quad \text{and} \\
	\vec{B}^{\uparrow}_{\ell'} &\coloneqq \vec{I}^+_{\ell'}(\vec{D}_{\ell'}^+)^{-1}(\vec{I}^+_{\ell'})^{T}\prod_{i= \ell' -1}^{1}(\vec{I}-\lambda \, \vec{A}_{\ell}\vec{I}_{i}^+(\vec{D}_{i}^+)^{-1}(\vec{I}_{i}^+)^{T})\prod_{i=0}^{\ell}(\vec{I}-\lambda \, \vec{A}_{\ell}\vec{I}_{i}^+(\vec{D}_{i}^+)^{-1}(\vec{I}_i^+)^{T}),
\end{align*}
where we set $\vec{D}_0^+ \coloneqq \lambda \,\vec{A}_{0}$ and $(\vec{D}^+_{\ell})^{-1} \coloneqq \sum_{z \in \VV_{\ell}} \vec{I}_{\ell,z}^p (\vec{A}_{\ell,z}^p)^{-1}(\vec{I}_{\ell,z}^p)^T$ for easier notation.

Let $u_{\ell}$ be the current approximation of $u_{\ell}^{\star}$ and $\vec{x}_{\ell} = \chi_{\ell}^p[u_{\ell}], \vec{x}_{\ell}^{\star} = \chi_{\ell}^p[u_{\ell}^{\star}] \in \R^{N_{\ell}^p}$.
Moreover, we set $\vec{r}_{\ell} = \vec{b}_{\ell} - \vec{A}_{\ell} \vec{x}_{\ell} = \vec{A}_{\ell}(\vec{x}^{\star}_{\ell}-\vec{x}_{\ell})$. Along the lines of~\eqref{eq: levelwise matrices}--\eqref{eq: MG update} in Section~\ref{sec: hp multigrid}, one also shows that the coefficient vectors $\vec{s}^{\downarrow}_{\ell'} = \chi_{\ell}^p (\rho^{\downarrow}_{\ell'}), \vec{s}^{\uparrow}_{\ell'} = \chi_{\ell}^p (\rho^{\uparrow}_{\ell'}) \in \R^{N_{\ell}^p}$  are given by
\begin{equation*}
	\vec{s}^{\downarrow}_{\ell'} = \vec{B}^{\downarrow}_{\ell'}\vec{r}_{\ell}, \quad \vec{s}^{\uparrow}_0 = \vec{B}^{\uparrow}_0 \vec{r}_{\ell} \quad \text{and} \quad \vec{s}^{\uparrow}_{\ell'} = \vec{B}^{\uparrow}_{\ell'}\vec{r}_{\ell}.
\end{equation*}
Proceeding analogously as in Section~\ref{sec: geometric multigrid}, we obtan that the total error update $\sigma_{\ell}$ satisfies
\begin{equation*}
	\chi_{\ell}^p (\sigma_{\ell}) =\vec{s} = \lambda \sum_{\ell'=1}^{\ell} \vec{s}^{\downarrow}_{\ell'} + \vec{s}^{\uparrow}_0 + \lambda \sum_{\ell'=1}^{\ell} \vec{s}^{\uparrow}_{\ell'} = \Big(\lambda\sum_{\ell'=1}^{\ell} \vec{B}^{\downarrow}_{\ell'} + \vec{B}^{\uparrow}_0 +\lambda \sum_{\ell'=1}^{\ell} \vec{B}^{\uparrow}_{\ell'} \Big)\vec{r}_{\ell}.
\end{equation*}
With these preparations, we define the linear and symmetric multigrid preconditioner $\vec{B}^{\SMG}_{\ell}$
\begin{equation}\label{eq: sMG preconditioner}
	\vec{B}^{\SMG}_{\ell} \coloneqq \lambda\sum_{\ell'=1}^{\ell} \vec{B}^{\downarrow}_{\ell'} + \vec{B}^{\uparrow}_{0} + \lambda \sum_{\ell'=1}^{\ell} \vec{B}^{\uparrow}_{\ell'} \in \R^{N_{\ell}^p \times N_{\ell}^p}.
\end{equation}
Lemma~\ref{lemma: unnötig} yields the representation
\begin{equation}\label{eq: symmetric multigrid preconditioned matrix}
	\vec{B}^{\SMG}_{\ell} \vec{A}_{\ell} = \vec{I} - \prod_{\ell'= \ell}^{1} (\vec{I}-\lambda \, \vec{S}_{\ell'})(\vec{I}-\vec{S}_0)\prod_{\ell' = 1}^{\ell} (\vec{I}-\lambda \, \vec{S}_{\ell'}).
\end{equation}

The subsequent theorem shows that $\textnormal{cond}_{\vec{A}_{\ell}}(\vec{B}^{\SMG}_{\ell}\vec{A}_{\ell})$ is bounded independently of $h$ and $p$. 
Thus, Proposition~\ref{theorem: CG convergence} yields that PCG with preconditioner $\vec{B}_{\ell}^{\SMG}$ contracts $h$- and $p$-robustly.
The proof is postponed to Section~\ref{subsec: proof sMG}.
With Remark~\ref{remark: cost sMG}, it follows that $\vec{B}_{\ell}^{\SMG}$ is indeed optimal.

\begin{theorem}[PCG with linear and symmetric MG preconditioner]\label{corollary: symmetric MG optimality}
	The symmetric multigrid preconditioner $\vec{B}^{\SMG}_{\ell}$ defined in~\eqref{eq: sMG preconditioner} is SPD and optimal, i.e., there holds
	\begin{equation}\label{eq:sMG spectral bound}
		\operatorname{cond}_2((\vec{B}_{\ell}^{\SMG})^{1/2}\vec{A}_{\ell}(\vec{B}_{\ell}^{\SMG})^{1/2}) = \textnormal{cond}_{\vec{A}_{\ell}}(\vec{B}^{\SMG}_{\ell}\vec{A}_{\ell}) \leq \frac{1 + q^2}{1-q^2},
	\end{equation}
	with the contraction factor $0 < q <1$ from Proposition~\ref{lemma: MG contraction}. 
	For the iterates $\vec{x}_{\ell}^k$ generated by PCG employing the preconditioner~\eqref{eq: sMG preconditioner} with initial guess $\vec{x}_{\ell}^0$, there holds
	\begin{equation}\label{eq:sMG contraction}
		|\vec{x}_{\ell}^\star- \vec{x}_\ell^{k+1}|_{\vec{A}_{\ell}} \leq \Big(1- \frac{1-q^2}{1+q^2}\Big)^{1/2} |\vec{x}_{\ell}^\star- \vec{x}_\ell^k|_{\vec{A}_\ell} \quad \text{for all } k \in \N_0.
	\end{equation}
	For $u_{\ell}^{\star} = \sum_{j=1}^{N_{\ell}^p} (\vec{x}_{\ell}^{\star})_j \varphi^p_{\ell,j}$ and $u_{\ell}^{k} = \sum_{j=1}^{N_{\ell}^p} (\vec{x}_{\ell}^{k})_j \varphi^p_{\ell,j}$, this directly translates to 
	\begin{equation*}
		 \enorm{u_{\ell}^{\star}- u_{\ell}^{k+1}} \leq \Big(1- \frac{1-q^2}{1+q^2}\Big)^{1/2} \, \enorm{u_{\ell}^{\star} - u_{\ell}^k} \quad \text{for all } k \in \N_0. 
	\end{equation*}
\end{theorem}

The following lemma provides a connection between the notion of symmetrizing the multigrid via pre-smoothing and symmetrizing matrices via their transpose.

\begin{lemma}[Symmetrized MG is a symmetric preconditioner]\label{lemma: symmetric multigrid}
	It holds that $\vec{B}^{\SMG}_{\ell}\vec{A}_{\ell}$ is symmetric with respect to $(\cdot, \cdot )_{\vec{A}_{\ell}}$ and hence $\vec{B}^{\SMG}_{\ell}$ is a symmetric matrix. 
	Let us define the error propagation matrix
	\begin{equation}\label{eq: error propagation martix}
		\vec{E}_{\ell} \coloneqq \prod_{\ell'= \ell}^{1} (\vec{I}-\lambda \, \vec{S}_{\ell'}) (\vec{I}-\vec{S}_0) = \vec{I}-\vec{B}_{\ell}^{\LMG}\vec{A}_{\ell},
	\end{equation}
	where $\vec{B}_{\ell}^{\LMG}$ is defined in Remark~\ref{remark: linearized MG}.
	Then, it follows that
	\begin{equation}\label{eq: SMG matrix symmetrization}
		\vec{B}^{\SMG}_{\ell} \vec{A}_{\ell} = \vec{I} - \vec{E}_{\ell} \vec{E}_{\ell}^{T_{\vec{A}_{\ell}}}.
	\end{equation}
	where $T_{\vec{A}_{\ell}}$ denotes the transpose with respect to the inner product $(\cdot, \cdot)_{\vec{A}_{\ell}}$.
\end{lemma}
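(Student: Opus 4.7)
The plan is to establish the matrix identity~\eqref{eq: SMG matrix symmetrization} by direct computation and then deduce both symmetry statements from it. The ingredients I would exploit are: (i) each levelwise smoother $\vec{S}_{\ell'}$ for $\ell'\in\{0,1,\ldots,\ell\}$ is symmetric with respect to $(\cdot,\cdot)_{\vec{A}_\ell}$, which follows exactly as in the proof of Proposition~\ref{lemma: connection between MG operator and matrix} from the self-adjointness of the orthogonal projections $\PP_0^1$, $\PP_{\ell',z}^1$, and $\PP_{\ell,z}^p$ with respect to $\edual{\cdot}{\cdot}$; and (ii) the coarse solver $\vec{S}_0 = \vec{I}_0^+\vec{A}_0^{-1}(\vec{I}_0^+)^T\vec{A}_\ell$ is idempotent. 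Statement~(ii) follows from the Galerkin identity $(\vec{I}_0^+)^T\vec{A}_\ell\vec{I}_0^+ = \vec{A}_0$, which in turn is a consequence of the inclusion $\XX_0^1=\XX_0^+\subseteq\XX_\ell^p$ combined with the definitions of the two Galerkin matrices; a direct calculation then yields $\vec{S}_0^2=\vec{S}_0$, so $(\vec{I}-\vec{S}_0)^2=\vec{I}-\vec{S}_0$.

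With these tools in hand, the identification $\vec{E}_\ell=\vec{I}-\vec{B}_\ell^{\MG}\vec{A}_\ell$ is immediate upon comparing~\eqref{eq: error propagation martix} with the explicit formula for $\vec{B}_\ell^{\MG}\vec{A}_\ell$ stated in Remark~\ref{remark: linearized MG}. Next, reversing the order of the product in the definition of $\vec{E}_\ell$ and applying~(i) factor by factor, I would obtain
\[
\vec{E}_\ell^{T_{\vec{A}_\ell}} = (\vec{I}-\vec{S}_0)\prod_{\ell'=\ell}^{1}(\vec{I}-\lambda\,\vec{S}_{\ell'}).
\]
Forming the product $\vec{E}_\ell\vec{E}_\ell^{T_{\vec{A}_\ell}}$ and collapsing the double factor $(\vec{I}-\vec{S}_0)^2=\vec{I}-\vec{S}_0$ via~(ii) then gives
\[
\vec{E}_\ell\vec{E}_\ell^{T_{\vec{A}_\ell}} = \prod_{\ell'=1}^{\ell}(\vec{I}-\lambda\,\vec{S}_{\ell'})\,(\vec{I}-\vec{S}_0)\prod_{\ell'=\ell}^{1}(\vec{I}-\lambda\,\vec{S}_{\ell'}),
\]
which matches precisely the expression for $\vec{I}-\vec{B}^{\SMG}_\ell\vec{A}_\ell$ on the right-hand side of~\eqref{eq: symmetric multigrid preconditioned matrix}, establishing~\eqref{eq: SMG matrix symmetrization}.

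Finally, the right-hand side of~\eqref{eq: SMG matrix symmetrization} is $\vec{A}_\ell$-symmetric by construction, since $(\vec{M}\vec{M}^{T_{\vec{A}_\ell}})^{T_{\vec{A}_\ell}}=\vec{M}\vec{M}^{T_{\vec{A}_\ell}}$, and therefore so is $\vec{B}^{\SMG}_\ell\vec{A}_\ell$. Translating this back to ordinary transposes shows that $\vec{A}_\ell\vec{B}^{\SMG}_\ell\vec{A}_\ell$ is symmetric as a matrix, and conjugating with $\vec{A}_\ell^{-1}$ on both sides yields $\vec{B}^{\SMG}_\ell=(\vec{B}^{\SMG}_\ell)^T$. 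The main subtlety I expect to encounter is noticing the idempotence of $\vec{S}_0$: it is precisely because the coarse level uses an exact solve rather than a diagonal scaling that $(\vec{I}-\vec{S}_0)^2$ collapses, which is what allows the two one-sided products in~\eqref{eq: symmetric multigrid preconditioned matrix} to combine into the clean symmetric form $\vec{E}_\ell\vec{E}_\ell^{T_{\vec{A}_\ell}}$.
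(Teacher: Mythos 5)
Your proof is correct and follows essentially the same route as the paper's: both arguments hinge on the $\vec{A}_\ell$-self-adjointness of the levelwise factors $\vec{I}-\lambda\vec{S}_{\ell'}$ and $\vec{I}-\vec{S}_0$ (via Lemma~\ref{lemma: levelwise operator and Matrix}) together with the idempotence $(\vec{I}-\vec{S}_0)^2=\vec{I}-\vec{S}_0$ stemming from $\vec{S}_0$ being the matrix of the Galerkin projection $\PP_0^1$. The only difference is organizational — you establish~\eqref{eq: SMG matrix symmetrization} first and read off symmetry from the form $\vec{I}-\vec{E}_\ell\vec{E}_\ell^{T_{\vec{A}_\ell}}$, whereas the paper first observes symmetry directly from the factored representation~\eqref{eq: symmetric multigrid preconditioned matrix} and then derives~\eqref{eq: SMG matrix symmetrization} — but the mathematical content is identical.
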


\begin{proof}[\textbf{Proof}]
	Let $\vec{A}$ be an SPD matrix and $\vec{N}$ and $\vec{M}$ be any two matrices of the same size as $\vec{A}$.
	Then, it holds that $(\vec{N}\vec{M})^{T_{\vec{A}}} = \vec{M}^{T_{\vec{A}}}\vec{N}^{T_{\vec{A}}}.$
	Lemma~\ref{lemma: levelwise operator and Matrix} yields that the levelwise matrices $\vec{I}-\lambda \vec{S}_{\ell'}$ and $\vec{I}-\vec{S}_0$ are symmetric with respect to $(\cdot, \cdot)_{\vec{A}_{\ell}}$. 
	With~\eqref{eq: symmetric multigrid preconditioned matrix}, we conclude that $\vec{B}^{\SMG}_{\ell}\vec{A}_{\ell}$ is symmetric with respect to $(\cdot, \cdot)_{\vec{A}_{\ell}}$.
	Due to the symmetry of the levelwise matrices, we can write
	\begin{equation*}
		\vec{E}_{\ell} \vec{E}_{\ell}^{T_{\vec{A}_{\ell}}} = \prod_{\ell'= \ell}^{1} (\vec{I}-\lambda \, \vec{S}_{\ell'}) (\vec{I}-\vec{S}_0) (\vec{I}-\vec{S}_0)\prod_{\ell' = 1}^{\ell} (\vec{I}-\lambda \, \vec{S}_{\ell'}).
	\end{equation*}
	Since $\vec{S}_0$ is the matrix representation of the lowest-order Galerkin projection $\PP_0^1$, we get
	\begin{equation*}
		(\vec{I} - \vec{S}_0)(\vec{I}-\vec{S}_0) = \vec{I} - \vec{S}_0\,
	\end{equation*}
	and~\eqref{eq: SMG matrix symmetrization} follows from~\eqref{eq: symmetric multigrid preconditioned matrix}.
	This concludes the proof.
\end{proof}

\subsection{Proof of Theorem~\ref{corollary: symmetric MG optimality}}\label{subsec: proof sMG}
	For any SPD matrix $\vec{A}$ and any square matrix $\vec{M}$ satisfy
	\begin{equation}\label{eq: transpose connection}
		(\vec{x}, \vec{M}^T \vec{y})_2 = (\vec{M}\vec{x}, \vec{y})_2 = (\vec{M}\vec{x}, \vec{A}^{-1}\vec{y})_{\vec{A}} = (\vec{x}, \vec{M}^{T_{\vec{A}}}\vec{A}^{-1}\vec{y})_{\vec{A}} = (\vec{x}, \vec{A}\vec{M}^{T_{\vec{A}}}\vec{A}^{-1}\vec{y})_2 
 	\end{equation}
	for all $\vec{x},\vec{y} \in \R^{N_{\ell}^p}$ and hence $\vec{M}^T = \vec{A} \vec{M}^{T_{\vec{A}}}\vec{A}^{-1}$.
	Furthermore, we have
	\begin{equation}\label{eq: operator norm identity}
		|\vec{M}|_{\vec{A}} = \max_{|\vec{x}|_{\vec{A}}=1} |\vec{M}\vec{x}|_{\vec{A}} = \max_{|\vec{y}|_2 = 1} |\vec{A}^{1/2}\vec{M}\vec{A}^{-1/2}\vec{y}|_2 = |\vec{A}^{1/2}\vec{M}\vec{A}^{-1/2}|_2.
	\end{equation} 
	Recall the familiar identity
	\begin{equation}\label{eq: euclidean norm identity}
		|\vec{M}|_2^2 = |\vec{M}\vec{M}^T|_2 = |\vec{M}^T|_2^2.
	\end{equation}
	Altogether, it follows that
	\begin{align*}
		|\vec{M}|_{\vec{A}}^2 &\overset{\eqref{eq: operator norm identity}}{=} |\vec{A}^{1/2}\vec{M}\vec{A}^{-1/2}|_2^2 \overset{\eqref{eq: euclidean norm identity}}{=} |\vec{A}^{1/2}\vec{M}\vec{A}^{-1}\vec{M}^T\vec{A}^{1/2}|_2^2 \overset{\eqref{eq: transpose connection}}{=} |\vec{A}^{1/2}\vec{M}\vec{M}^{T_{\vec{A}}}\vec{A}^{-1/2}|_2 \overset{\eqref{eq: operator norm identity}}{=} |\vec{M}\vec{M}^{T_{\vec{A}}}|_{\vec{A}}
	\end{align*}
	as well as
	\begin{equation*}
		|\vec{M}|_{\vec{A}} \overset{\eqref{eq: operator norm identity}}{=} |\vec{A}^{1/2}\vec{M}\vec{A}^{-1/2}|_2 \overset{\eqref{eq: euclidean norm identity}}{=} |\vec{A}^{1/2}\vec{M}^T\vec{A}^{-1/2}|_2 = |\vec{M}^{T_{\vec{A}}}|_{\vec{A}}.
	\end{equation*}
	This proves
	\begin{equation}\label{eq: norm equality}
		|\vec{M}|^2_{\vec{A}} = |\vec{M}\vec{M}^{T_{\vec{A}}}|_{\vec{A}} = |\vec{M}^{T_{\vec{A}}}|^2_{\vec{A}}.
	\end{equation}
	Symmetry of $\vec{B}_{\ell}^{\SMG}$ follows from Lemma~\ref{lemma: symmetric multigrid}. 
	To show that $\vec{B}_{\ell}^{\SMG}$ is positive definite, note that~\eqref{eq: SMG matrix symmetrization}, \eqref{eq: norm equality}, and~\eqref{eq: linearized MG contracion} imply that 
	\begin{align*}
		(\vec{B}^{\SMG}_{\ell}\vec{A}_{\ell}\vec{x}_{\ell}, \vec{x}_{\ell})_{\vec{A}_{\ell}} \, &\eqreff*{eq: SMG matrix symmetrization}{=} \, ((\vec{I}-\vec{E}_{\ell}\vec{E}_{\ell}^{T_{\vec{A}_{\ell}}})\vec{x}_{\ell}, \vec{x}_{\ell})_{\vec{A}_{\ell}} \eqreff{eq: norm equality}= (\vec{x}_{\ell},\vec{x}_{\ell})_{\vec{A}_{\ell}} - |\vec{E}_{\ell}\vec{x}_{\ell}|^2_{\vec{A}_{\ell}} \\
		& \, \eqreff*{eq: linearized MG contracion}{\geq} \, (1-q) |\vec{x}_{\ell}|^2_{\vec{A}_{\ell}} > 0 \quad \text{for all } \vec{x}_{\ell} \in \R^{N_{\ell}^p} \backslash \{0\}.
	\end{align*}
	Substituting $\vec{y}_{\ell} = \vec{A}_{\ell}\vec{x}_{\ell}$ yields $(\vec{B}^{\SMG}_{\ell}\vec{y}_{\ell},\vec{y}_{\ell})_2 >0$ for all $\vec{y}_{\ell} \neq 0$. 
	Hence, $\vec{B}^{\SMG}_{\ell}$ is SPD.
	Using identity~\eqref{eq: SMG matrix symmetrization}, the Neumann series, 
	identity~\eqref{eq: norm equality}, and~\eqref{eq: linearized MG contracion}, we get
	\begin{align*}
		\textnormal{cond}_{\vec{A}_{\ell}}(\vec{B}^{\SMG}_{\ell}\vec{A}_{\ell}) &= |\vec{B}^{\SMG}_{\ell}\vec{A}_{\ell}|_{\vec{A}_{\ell}}|(\vec{B}^{\SMG}_{\ell}\vec{A}_{\ell})^{-1}|_{\vec{A}_{\ell}} \eqreff{eq: SMG matrix symmetrization}= |\vec{I}- \vec{E}_{\ell}\vec{E}_{\ell}^{T_{\vec{A}_{\ell}}}|_{\vec{A}_{\ell}} |(\vec{I}- \vec{E}_{\ell}\vec{E}_{\ell}^{T_{\vec{A}_{\ell}}})^{-1}|_{\vec{A}_{\ell}} \\
		& \leq \frac{|1-\vec{E}_{\ell}\vec{E}^{T_{\vec{A}_{\ell}}}_{\ell}|_{\vec{A}_{\ell}}}{1 - |\vec{E}_{\ell}\vec{E}^{T_{\vec{A}_{\ell}}}_{\ell}|_{\vec{A}_{\ell}}} \leq  \frac{1 + |\vec{E}_{\ell}\vec{E}^{T_{\vec{A}_{\ell}}}_{\ell}|_{\vec{A}_{\ell}}}{1 - |\vec{E}_{\ell}\vec{E}^{T_{\vec{A}_{\ell}}}_{\ell}|_{\vec{A}_{\ell}}} \overset{\eqref{eq: norm equality}}{=} \frac{1 + |\vec{E}_{\ell}|^2_{\vec{A}_{\ell}}}{1 - |\vec{E}_{\ell}|^2_{\vec{A}_{\ell}}} \eqreff{eq: linearized MG contracion}{\leq}\frac{1 + q^2}{1-q^2}.
	\end{align*}
	This proves~\eqref{eq:sMG spectral bound}.
	Contraction~\eqref{eq:sMG contraction} follows from Proposition~\ref{theorem: CG convergence} and thus concludes the proof.
\qed

%%%%%%%%%%%%%%%%%%%%%%%%%%%%%%%%%%%%%%%%%%%%%%%%%%%%%%%%%%%%%%%%%%%%%%%%%%%%%%%%%%%
%%%%%%%%%%%%%%%%%%%%%%%%%%%%%%%%%%%%%%%%%%%%%%%%%%%%%%%%%%%%%%%%%%%%%%%%%%%%%%%%%%%
\section{Optimal additive Schwarz preconditioner}\label{sec: additive Schwarz operator}
%%%%%%%%%%%%%%%%%%%%%%%%%%%%%%%%%%%%%%%%%%%%%%%%%%%%%%%%%%%%%%%%%%%%%%%%%%%%%%%%%%%
%%%%%%%%%%%%%%%%%%%%%%%%%%%%%%%%%%%%%%%%%%%%%%%%%%%%%%%%%%%%%%%%%%%%%%%%%%%%%%%%%%%

In this section, we consider an additive Schwarz preconditioner $\vec{B}^{\AS}_{\ell}$ induced by the levelwise matrices $\vec{S}_{\ell'}$ from~\eqref{eq: levelwise matrices}. 

\subsection{Preconditioner and corresponding main result}
Define the additive Schwarz preconditioner $\vec{B}^{\AS}_{\ell}$ as
\begin{equation}\label{eq: AS preconditioner}
	\vec{B}^{\AS}_{\ell} \coloneqq \vec{I}_0^+ \vec{A}_0^{-1} (\vec{I}_0^+)^T + \sum_{\ell'=1}^{\ell-1} \vec{I}_{\ell'}^+ (\vec{D}_{\ell'}^+)^{-1} (\vec{I}_{\ell'}^+)^T + \sum_{z \in \VV_{\ell}} \vec{I}_{\ell,z}^p (\vec{A}_{\ell,z}^p)^{-1} (\vec{I}_{\ell,z}^p)^T.
\end{equation}
From its definition~\eqref{eq: AS preconditioner} and the definition~\eqref{eq: levelwise matrices} of the matrices $\vec{S}_{\ell'}$, it follows that $\vec{B}^{\AS}_{\ell}\vec{A}_{\ell} = \sum_{\ell'=0}^{\ell} \vec{S}_{\ell'}$.
We define the additive Schwarz operator $\SS_{\ell}^{\AS}$ via
\begin{equation}\label{eq: AS operator}
	\SS_{\ell}^{\AS} \coloneqq \sum_{\ell' =0}^\ell \SS_{\ell}.
\end{equation}

\begin{remark}[Computational complexity of additive Schwarz preconditioner]\label{remark: cost AS}
	Arguing as in Remark~\ref{rem: complexity of geometric MG}, we conclude that the overall computational cost of a single application of the additive Schwarz preconditioner $\vec{B}_{\ell}^{\AS}$ is $\OO(\#\TT_{\ell})$. 
    Moreover, since this is an additive method, in contrast to the multiplicative structure of $\vec{B}_{\ell}^{\MG}$ and $\vec{B}_{\ell}^{\SMG}$, its application can be efficiently parallelized, thereby reducing the overall computational time.
\end{remark}

The subsequent theorem shows that $\textnormal{cond}_{\vec{A}_{\ell}}(\vec{B}^{\AS}_{\ell}\vec{A}_{\ell})$ is bounded independently of $h$ and $p$. 
Thus, Proposition~\ref{theorem: CG convergence} yields that PCG with preconditioner $\vec{B}_{\ell}^{\AS}$ contracts $h$- and $p$-robustly.
The proof is postponed to Section~\ref{subsec: proof AS}.
Together with Remark~\ref{remark: cost AS}, it follows that $\vec{B}_{\ell}^{\AS}$ is optimal.

\begin{theorem}[PCG with additive Schwarz preconditioner]\label{theorem: AS condition number bound}
	The additive Schwarz preconditioner $\vec{B}_{\ell}^{\AS}$ defined in~\eqref{eq: AS preconditioner} is an SPD matrix.
	For the minimal and maximal eigenvalues of $\vec{B}^{\AS}_{\ell}\vec{A}_{\ell}$ it holds that
	\begin{equation*}
		c \leq \lambda_{\min}(\vec{B}^{\AS}_{\ell}\vec{A}_{\ell}) \quad \text{and} \quad \lambda_{\max}(\vec{B}^{\AS}_{\ell}\vec{A}_{\ell}) \leq C,
	\end{equation*}
	where $c,C>0$ are indepedent of $h$, $p$, and only depend locally on the diffusion contrast $\vec{K}$. 
	In particular, this yields
	\begin{equation*}
		\operatorname{cond}_2((\vec{B}_{\ell}^{\AS})^{1/2}\vec{A}_{\ell}(\vec{B}_{\ell}^{\AS})^{1/2}) = \operatorname{cond}_{\vec{A}_{\ell}}(\vec{B}^{\AS}_{\ell}\vec{A}_{\ell}) \leq C/c,
	\end{equation*}
	For the iterates $\vec{x}_{\ell}^k$ generated by PCG with preconditioner $\vec{B}_{\ell}^{\AS}$ and initial guess $\vec{x}_{\ell}^0$, there holds
	\begin{equation}\label{eq: AS contraction}
		|\vec{x}_{\ell}^\star- \vec{x}_\ell^{k+1}|_{\vec{A}_{\ell}} \leq \Big(1- \frac{c}{C}\Big)^{1/2} |\vec{x}_{\ell}^\star- \vec{x}_\ell^k|_{\vec{A}_\ell} \quad \text{for all } k \in \N_0.
	\end{equation}
	For $u_{\ell}^{\star} = \sum_{j=1}^{N_{\ell}^p} (\vec{x}_{\ell}^{\star})_j \varphi^p_{\ell,j}$ and $u_{\ell}^{k} = \sum_{j=1}^{N_{\ell}^p} (\vec{x}_{\ell}^{k})_j \varphi^p_{\ell,j}$, this directly translates to 
	\begin{equation*}
		 \enorm{u_{\ell}^{\star}- u_{\ell}^{k+1}} \leq \Big(1- \frac{c}{C}\Big)^{1/2} \, \enorm{u_{\ell}^{\star} - u_{\ell}^k} \quad \text{for all } k \in \N_0. 
	\end{equation*}
\end{theorem}

The following lemma provides a translation between functional and matrix notation for the additive Schwarz method, analogous to Lemma~\ref{lemma: connection between MG operator and matrix} for the geometric multigrid method.
The proof follows directly from Lemma~\ref{lemma: levelwise operator and Matrix} and is thus omitted.

\begin{lemma}[Additive Schwarz functional/matrix representation]
	Let $\vec{B}^{\AS}_{\ell}$ and $\SS_{\ell}^{\AS}$ denote the preconditioner and operator from~\eqref{eq: AS preconditioner}--\eqref{eq: AS operator}.
	Then, there holds 
	\begin{equation}\label{eq: connection AS operator and matrix}
		\edual{\SS_{\ell}^{\AS} v_{\ell}}{w_{\ell}} = (\vec{B}^{\AS}_{\ell}\vec{A}_{\ell}\vec{x}_{\ell},\vec{y}_{\ell})_{\vec{A}_{\ell}} \quad \text{for all } v_{\ell},w_{\ell} \in \XX_{\ell}^p \text{ and } \vec{x}_{\ell} = \chi^p_{\ell}(v_{\ell}), \, \vec{y}_{\ell} = \chi^p_{\ell}(w_{\ell}). \qed
	\end{equation} 
\end{lemma}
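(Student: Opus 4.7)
The plan is to reduce the claim to a levelwise statement and then invoke Lemma~\ref{lemma: levelwise operator and Matrix} term by term. The key observation is that both sides of~\eqref{eq: connection AS operator and matrix} are linear in the levelwise contributions: on the operator side this is immediate from~\eqref{eq: AS operator}, namely $\SS_{\ell}^{\AS} = \sum_{\ell'=0}^{\ell}\SS_{\ell'}$, while on the matrix side one verifies directly from the definitions~\eqref{eq: AS preconditioner} and~\eqref{eq: levelwise matrices} that
\begin{equation*}
\vec{B}^{\AS}_{\ell}\vec{A}_{\ell} = \vec{I}_0^+\vec{A}_0^{-1}(\vec{I}_0^+)^T\vec{A}_{\ell} + \sum_{\ell'=1}^{\ell-1}\vec{I}_{\ell'}^+(\vec{D}_{\ell'}^+)^{-1}(\vec{I}_{\ell'}^+)^T\vec{A}_{\ell} + \sum_{z\in\VV_{\ell}}\vec{I}_{\ell,z}^p(\vec{A}_{\ell,z}^p)^{-1}(\vec{I}_{\ell,z}^p)^T\vec{A}_{\ell} = \sum_{\ell'=0}^{\ell}\vec{S}_{\ell'}.
\end{equation*}

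Given these two identities, I would conclude in a single short chain: expanding the left-hand side of~\eqref{eq: connection AS operator and matrix} via $\SS_{\ell}^{\AS}=\sum_{\ell'=0}^{\ell}\SS_{\ell'}$ and the bilinearity of $\edual{\cdot}{\cdot}$, applying~\eqref{eq: levelwise operator and Matrix} from Lemma~\ref{lemma: levelwise operator and Matrix} on each level to rewrite $\edual{\SS_{\ell'}v_{\ell}}{w_{\ell}} = (\vec{S}_{\ell'}\vec{x}_{\ell},\vec{y}_{\ell})_{\vec{A}_{\ell}}$, and finally using linearity of $(\cdot,\cdot)_{\vec{A}_{\ell}}$ to pull the sum inside. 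Combining with the displayed identity $\vec{B}^{\AS}_{\ell}\vec{A}_{\ell}=\sum_{\ell'=0}^{\ell}\vec{S}_{\ell'}$ then yields the right-hand side of~\eqref{eq: connection AS operator and matrix}.

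There is no real obstacle here: the additive structure of both $\SS_{\ell}^{\AS}$ and $\vec{B}_{\ell}^{\AS}\vec{A}_{\ell}$ means no induction or telescoping is required, unlike in the multiplicative setting treated in Proposition~\ref{lemma: connection between MG operator and matrix}. The only point worth writing out cleanly is the short matrix manipulation identifying $\vec{B}^{\AS}_{\ell}\vec{A}_{\ell}$ with $\sum_{\ell'=0}^{\ell}\vec{S}_{\ell'}$, after which Lemma~\ref{lemma: levelwise operator and Matrix} applied levelwise finishes the argument, which is why the paper indicates that the proof follows directly.
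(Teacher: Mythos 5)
Your proof is correct and follows essentially the same route as the paper: the paper observes the identity $\vec{B}^{\AS}_{\ell}\vec{A}_{\ell} = \sum_{\ell'=0}^{\ell}\vec{S}_{\ell'}$ just before the lemma statement and notes that the result ``follows directly from Lemma~\ref{lemma: levelwise operator and Matrix},'' which is precisely the levelwise application plus bilinearity argument you spell out.
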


Similar to the multiplicative case, we show properties of the operator $\SS_{\ell}^{\AS} $ and translate them via~\eqref{eq: connection AS operator and matrix} to prove Theorem~\ref{theorem: AS condition number bound}.
For this purpose, we need some auxiliary results.

%%%%%%%%%%%%%%%%%%%%%%%%%%%%%%%%%%%%%%%%%%%%%%%%%%%%%%%%%%%%%%%%%%%%%%%%%%%%%%%%%%%
\subsection{Auxiliary results}\label{sec: AS auxiliary results}
%%%%%%%%%%%%%%%%%%%%%%%%%%%%%%%%%%%%%%%%%%%%%%%%%%%%%%%%%%%%%%%%%%%%%%%%%%%%%%%%%%%
First, recall Lions' lemma~\cite{Lio87}.

\begin{lemma}[Lions' lemma]\label{lemma: lion}
Let $\VV$ be a finite-dimensional Hilbert space with scalar product $\edual{\cdot}{\cdot}_{\VV}$ and 
corresponding norm $\enorm{\cdot}_{\VV}$. 
Suppose the decomposition $\VV = \sum_{j=0}^m \VV_{j}$ with corresponding orthogonal projections $\widetilde{\SS}_{j}: \VV \rightarrow \VV_{j}$ defined by
\begin{equation}
	\edual{\widetilde{\SS}_{j} v}{w_{j}}_{\VV} = \edual{v}{w_{j}}_{\VV} \quad \text{for all }v \in \VV \text{ and all } w_{j} \in \VV_{j}.
\end{equation}
If there exists a stable decomposition $v= \sum_{j=0}^m v_{j}$ with $v_{j} \in \VV_{j}$ for every $v \in \VV$ such that
\begin{equation}
	\sum_{j=0}^m \enorm{v_{j}}_{\VV}^2 \leq C \, \enorm{v}_{\VV}^2,
\end{equation}
then the operator $\widetilde{\SS} \coloneqq \sum_{j=0}^{m} \widetilde{\SS}_{j}$ satisfies, even with the same constant $C$,
\begin{equation}
	\enorm{v}_{\VV}^2 \leq C \edual{\widetilde{\SS} v}{v}_{\VV} \quad \text{for all } v \in \VV. \qed
\end{equation}
\end{lemma}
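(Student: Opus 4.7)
The plan is to establish the claimed lower bound by exploiting the stable decomposition together with the self-adjointness of each orthogonal projection $\widetilde{\SS}_{j}$ and a single application of Cauchy--Schwarz. The starting point is to expand $\enorm{v}_{\VV}^2$ along any chosen stable decomposition $v = \sum_{j=0}^{m} v_j$ and then to replace $v$ inside each term by its projection $\widetilde{\SS}_{j} v$, at which stage the product can be split and the stability hypothesis invoked.

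First I would write, using the decomposition of $v$ and the defining relation of $\widetilde{\SS}_{j}$ applied with test function $w_j = v_j \in \VV_j$,
\begin{equation*}
\enorm{v}_{\VV}^2 = \edual{v}{v}_{\VV} = \sum_{j=0}^{m} \edual{v}{v_j}_{\VV} = \sum_{j=0}^{m} \edual{\widetilde{\SS}_{j} v}{v_j}_{\VV}.
\end{equation*}
Next I would apply Cauchy--Schwarz termwise and then once more on the finite sum, obtaining
\begin{equation*}
\enorm{v}_{\VV}^2 \leq \Bigl(\sum_{j=0}^{m} \enorm{\widetilde{\SS}_{j} v}_{\VV}^2\Bigr)^{1/2} \Bigl(\sum_{j=0}^{m} \enorm{v_j}_{\VV}^2\Bigr)^{1/2} \leq C^{1/2} \, \enorm{v}_{\VV} \, \Bigl(\sum_{j=0}^{m} \enorm{\widetilde{\SS}_{j} v}_{\VV}^2\Bigr)^{1/2},
\end{equation*}
where the second inequality uses precisely the assumed stability constant $C$.

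Then I would identify the remaining sum with $\edual{\widetilde{\SS} v}{v}_{\VV}$. Since $\widetilde{\SS}_{j} v \in \VV_{j}$, the defining identity with test function $w_j = \widetilde{\SS}_{j} v$ gives $\enorm{\widetilde{\SS}_{j} v}_{\VV}^2 = \edual{v}{\widetilde{\SS}_{j} v}_{\VV}$, so summation over $j$ and the definition of $\widetilde{\SS}$ yield
\begin{equation*}
\sum_{j=0}^{m} \enorm{\widetilde{\SS}_{j} v}_{\VV}^2 = \sum_{j=0}^{m} \edual{v}{\widetilde{\SS}_{j} v}_{\VV} = \edual{v}{\widetilde{\SS} v}_{\VV} = \edual{\widetilde{\SS} v}{v}_{\VV}.
\end{equation*}
Substituting this back, dividing by $\enorm{v}_{\VV}$ (the case $v=0$ being trivial), and squaring produces the claimed estimate with the very same constant $C$.

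The argument is essentially mechanical; there is no real obstacle beyond being careful that the constant is preserved, which is why the order of the two Cauchy--Schwarz applications matters: pulling out $\enorm{v}_{\VV}$ on one side and $\edual{\widetilde{\SS} v}{v}_{\VV}^{1/2}$ on the other must be done simultaneously so that only one factor of $C^{1/2}$ appears. Finite-dimensionality of $\VV$ is used only to ensure that all sums are finite and the projections $\widetilde{\SS}_{j}$ are well defined; the proof would equally work in a Hilbert space setting provided the decomposition sum converges.
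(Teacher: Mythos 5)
Your proof is correct and is the standard argument for Lions' lemma; the paper itself does not prove this statement but cites it from the literature (\cite{Lio87}), so your write-up simply supplies the omitted standard proof. All steps check out, including the key identity $\sum_{j}\enorm{\widetilde{\SS}_{j}v}_{\VV}^2 = \edual{\widetilde{\SS}v}{v}_{\VV}$ and the observation that the two Cauchy--Schwarz applications must be combined so that only one factor of $C^{1/2}$ appears.
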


We also need a strengthened Cauchy--Schwarz inequality. 
This requires the use of generation constraints. 
For every $\ell' \in \{0, \dots, \ell\}$ and $z \in \VV_{\ell'}$, define the generation $g_{\ell',z}$ of $\TT_{\ell'}(z)$ by
\begin{equation}
	g_{\ell',z} \coloneqq \max_{T \in \TT_{\ell'}(z)} \level(T) \coloneqq \max_{T \in \TT_{\ell'}(z)} \log_2(|T_0|/|T|) \in \N_0,
\end{equation}  
where $T_0 \in \TT_0$ is the unique ancestor of $T$, i.e., $T \subseteq T_0$.

Let $M \coloneqq \max_{T \in \TT_{\ell}} \level(T)$. 
We denote by $\{\widehat{\TT}_j\}_{j=0}^M$ the sequence of uniformly refined triangulations that satisfy $\widehat{\TT}_{j+1} \coloneqq \mathtt{REFINE}(\widehat{\TT}_j,\widehat{\TT}_j)$ and $\widehat{\TT}_0\coloneqq \TT_0$. 
Since $\TT_0$ is admissible, each element $T \in \widehat{\TT}_j$ satisfies $\level(T)=j$ and hence is only bisected once during uniform refinement; see~\cite[Theorem~4.3]{Ste08}. 
Moreover, denote by $\widehat{h}_j\coloneqq \max_{T \in \widehat{\TT}_j} |T|^{1/d}$  the mesh-size of the uniform triangulation $\widehat{\TT}_j$. 
Importantly, there holds $\widehat{h}_j \simeq h_T$ for all $T \in \widehat{\TT}_j$ and all $j \in \N_0$ and the hidden constants depend only on $\TT_0$. 
Every object associated with uniform meshes will be indicated with a hat, e.g., $\widehat{\XX}_j^1$ is the lowest-order FEM space induced by $\widehat{\TT}_j$. 

\begin{lemma}\label{lemma: patches}
	Let $\ell' \in \{0, \dots, \ell\}$ and $z \in \VV_{\ell'}$. 
	Define $r_{\ell',z} \coloneqq \min_{T \in \TT_{\ell'}(z)} \level(T).$
	Then, there exist integers $\Cspan, n \in \N$ depending only on $\gamma$-shape regularity such that $m \coloneqq g_{\ell',z} \leq r_{\ell',z} + \Cspan$ and $\omega_{\ell'}(z) \subseteq \widehat{\omega}_{m}^n(z)$. 
\end{lemma}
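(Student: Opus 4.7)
The plan is to prove the two assertions of the lemma separately. First, I would establish the level bound $g_{\ell',z} \le r_{\ell',z} + \Cspan$ as follows. Choose $T_{\min}, T_{\max} \in \TT_{\ell'}(z)$ realizing $r_{\ell',z}$ and $g_{\ell',z}$, respectively. Since both elements contain $z$, we have $T_{\min}\cap T_{\max} \neq \emptyset$, so the second shape regularity bound yields $\diam(T_{\max}) \le \gamma\,\diam(T_{\min})$. Combining with the first shape regularity bound $|T|^{1/d}\simeq \diam(T)$ (with constants depending on $\gamma$ and $d$), one gets $|T_{\min}|/|T_{\max}| \le C_1(\gamma,d)$. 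Since element volumes on $\TT_0$ are uniformly comparable, writing $\level(T) = \log_2(|T_0|/|T|)$ and taking logarithms yields $g_{\ell',z} - r_{\ell',z} \le \Cspan$ for a constant $\Cspan$ depending only on $\gamma$, $d$, and $\TT_0$.

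For the containment $\omega_{\ell'}(z) \subseteq \widehat{\omega}_m^n(z)$, I would first verify that $z$ is a vertex of $\widehat{\TT}_m$. By the NVB structure on admissible initial triangulations, every element of level $m$ in any NVB refinement of $\TT_0$ also appears in the uniform refinement $\widehat{\TT}_m$; see \cite[Theorem~4.3]{Ste08}. Hence $T_{\max}\in \widehat{\TT}_m$, which ensures $z \in \widehat{\VV}_m$ so that $\widehat{\omega}_m^n(z)$ is well-defined. Next, since $\widehat{\TT}_m$ is $\gamma$-shape-regular (inherited from $\TT_0$ through NVB), each of its elements has diameter $\simeq \widehat{h}_m$. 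Applying the second shape regularity bound once more, every $T \in \TT_{\ell'}(z)$ satisfies $\diam(T) \le \gamma\,\diam(T_{\max}) \le C_2\,\widehat{h}_m$, so that $\omega_{\ell'}(z)$ is contained in a ball of radius $C_2 \widehat{h}_m$ around $z$. An iterative layer-adding argument, controlled by shape regularity of $\widehat{\TT}_m$, shows that $\widehat{\omega}_m^n(z)$ eventually covers this ball once $n$ is chosen large enough in terms of $C_2$ and $\gamma$.

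The first claim is a routine consequence of shape regularity. The main obstacle is the second part. The combinatorial ingredient — that $T_{\max}$ belongs to the uniformly refined mesh $\widehat{\TT}_m$ — relies on the rigid binary-tree structure of NVB on admissible triangulations and must be invoked carefully. The geometric ingredient — that the $n$-patch $\widehat{\omega}_m^n(z)$ captures the full neighborhood $\omega_{\ell'}(z)$ for $n$ depending only on $\gamma$ — is a finite-dimensional packing argument whose constant must be tracked so that $n$ remains independent of $\ell'$, $z$, and the global mesh hierarchy. Additional care is needed when $z \in \partial \Omega$ so that boundary clipping does not interfere with the containment, but there both patches lie in $\Omega$ and the same iterative argument goes through.
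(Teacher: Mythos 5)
Your argument is correct, and the first part (the level bound $m \le r_{\ell',z} + \Cspan$) is essentially identical to the paper's Step~1: pick elements realizing the two extremal levels, compare their volumes via $\gamma$-shape regularity and the quasi-uniformity of $\TT_0$, and take logarithms.

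For the containment $\omega_{\ell'}(z) \subseteq \widehat{\omega}_m^n(z)$ you take a genuinely different, more metric route. The paper proceeds purely combinatorially through the chain
$\omega_{\ell'}(z) \subseteq \widehat{\omega}_{r_{\ell',z}}(z) \subseteq \widehat{\omega}_{r_{\ell',z}+\Cspan}^n(z) \subseteq \widehat{\omega}_m^n(z)$:
the first inclusion uses that every $T \in \TT_{\ell'}(z)$ has $\level(T) \ge r_{\ell',z}$ and thus sits inside its ancestor in $\widehat{\TT}_{r_{\ell',z}}$; the second uses that each ancestor element is subdivided into exactly $2^{\Cspan}$ descendants under $\Cspan$ uniform refinements, yielding the explicit bound $n \le 2^{\Cspan}$; the third is just monotonicity of $n$-patches under coarsening (since $m \le r_{\ell',z}+\Cspan$). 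You instead verify $z \in \widehat{\VV}_m$ via $T_{\max}\in\widehat{\TT}_m$, bound $\omega_{\ell'}(z)$ by a ball of radius $\simeq \widehat{h}_m$ using shape regularity, and then argue by layer-adding that $\widehat{\omega}_m^n(z)$ covers this ball. Both are sound; the paper's argument has the advantage of being entirely tree-combinatorial (no metric packing lemma needed, and a concrete bound on $n$), while yours is more direct and perhaps more intuitive geometrically. One small point: the vertex-membership $z\in\widehat{\VV}_m$ that you check explicitly is implicit in the paper's proof, since $z$ is a vertex of the level-$r_{\ell',z}$ element realizing the minimum and vertices persist under refinement; it is worth stating as you do, because without it $\widehat{\omega}_m^n(z)$ would be undefined.
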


\begin{proof}[\textbf{Proof}]
	The proof is split into two steps.

	\textbf{Step 1:} Let $z \in \VV_{\ell'}$. 
	Then, there exists elements $T,T' \in \TT_{\ell'}(z)$ such that $g_{\ell',z} = \level(T)$ and $r_{\ell',z} = \level(T')$. 
	Moreover, we have $|T| \simeq |T'|$ due to $\gamma$-shape regularity.
	Denote by $T_0, T_0' \in \TT_0$ the unique ancestors of $T$ and $T'$, respectively. 
	With the quasi-uniformity of $\TT_0$, it follows that
	\begin{equation*}
		\level(T) = \log_2 (|T_0|/|T|) \leq \log_2 (C \, |T_0'|/|T'|) = \log_2(C) + \level(T'),
	\end{equation*}
	where $C>0$ depends only $\gamma$-shape regularity.
	For $\Cspan \coloneqq \lceil \log_2(C)\rceil$, we get $m = g_{\ell',z} \leq r_{\ell',z} + \Cspan$.

	\textbf{Step 2:} By definition of $r_{\ell',z}$, we have $\omega_{\ell'}(z) \subseteq \widehat{\omega}_{r_{\ell',z}}(z)$. 
	Every element $T \in \widehat{\TT}_{r_{\ell',z}}$ can be decomposed into elements $T_j \in \widehat{\TT}_{r_{\ell',z}+\Cspan}$ with $j = 1, \ldots, 2^{\Cspan}$, i.e., $T = \bigcup_{j=1}^{2^{\Cspan}} T_j$.
	Thus, for every $T \subseteq \overline{\widehat{\omega}_{r_{\ell',z}}(z)}$ we also have $T \subseteq \overline{\widehat{\omega}_{r_{\ell',z}+ \Cspan}^{2^{\Cspan}}(z)}$.
	Hence, there exists an integer $n \in \N$ with $n \leq 2^{\Cspan}$ such that $\widehat{\omega}_{r_{\ell',z}}(z) \subseteq \widehat{\omega}^n_{r_{\ell',z}+\Cspan}(z)$.
	Finally, Step 1 yields
	\begin{equation*}
		\omega_{\ell'}(z) \subseteq \widehat{\omega}_{r_{\ell',z}}(z) \subseteq \widehat{\omega}_{r_{\ell',z}+\Cspan}^n(z) \subseteq \widehat{\omega}_{m}^n(z).
	\end{equation*}
	This concludes the proof.
\end{proof}

Let $\TT$ be a refinement of the initial mesh $\TT_0$ and $\MM \subseteq \TT$. 
For $\omega \coloneqq \operatorname{int}\bigl(\bigcup_{T \in \MM} T\bigr)$, we define
\begin{equation}\label{eq: local constant domain}
    C[\omega] \coloneqq \max \{\max_{T \in \MM}\|\div (\mathbf{K})\|_{L^{\infty}(T)}, \sup_{y \in \omega}\lambda_{\max}(\mathbf{K}(y))\}.
\end{equation}
From \cite[Lemma 5.6]{IMPS24} and~\cite{Pau25}, the following strengthened Cauchy--Schwarz inequality on uniform meshes is already known. 

\begin{lemma}[Strengthened Cauchy--Schwarz inequality]
	Let $0 \leq i \leq j$, $\widehat{\MM}_i \subseteq \widehat{\TT}_i$, and $\omega_i \coloneqq \operatorname{int}(\bigcup_{T \in \widehat{\MM}_i}T)$. 
	Then, it holds that
	\begin{equation}\label{eq: uniform strengthenedCS}
		\edual{\widehat{u}_i}{\widehat{v}_j}_{\widehat{\omega}_i} \leq \Cscs \, C[\widehat{\omega}_i] \, \delta^{j-i} \widehat{h}_j^{-1} 
		\norm{\nabla \widehat{u}_i}_{\widehat{\omega}_i} \norm{\widehat{v}_j}_{\widehat{\omega}_i} \quad \text{for all } \widehat{u}_i \in \widehat{\XX}_i^1  
		\text{ and } \widehat{v}_j \in \widehat{\XX}_j^1,
	\end{equation}
	where $\delta = 2^{-1/(2d)}$. 
	The constant $\Cscs$ depends only on $\Omega$, $d$, $\TT_0$, and $\gamma$-shape regularity. \qed
\end{lemma}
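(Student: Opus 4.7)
The plan is to perform elementwise integration by parts on the coarser mesh $\widehat{\TT}_i$, splitting $\edual{\hat{u}_i}{\hat{v}_j}_{\widehat{\omega}_i}$ into a volume contribution driven by $\operatorname{div}\mathbf{K}$ and a face contribution involving jumps of $\mathbf{K}\nabla\hat{u}_i\cdot\mathbf{n}$, and then to estimate each piece via scaled trace and inverse estimates on the finer mesh $\widehat{\TT}_j$. The crucial scaling identity is
\begin{equation*}
\widehat{h}_i^{-1/2}\widehat{h}_j^{-1/2} \;=\; \widehat{h}_j^{-1}\,(\widehat{h}_j/\widehat{h}_i)^{1/2} \;=\; \widehat{h}_j^{-1}\,\delta^{j-i},
\end{equation*}
which holds because uniform newest-vertex bisection yields $\widehat{h}_j/\widehat{h}_i\simeq 2^{-(j-i)/d}$, and this is exactly where the factor $\delta^{j-i}=2^{-(j-i)/(2d)}$ comes from.

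Since $\hat{u}_i\in\widehat{\XX}_i^1$ has piecewise constant gradient on $\widehat{\TT}_i$ and each $T\in\widehat{\TT}_i$ is contained in an element of $\TT_0=\widehat{\TT}_0$ where $\mathbf{K}$ is $W^{1,\infty}$, the field $\mathbf{K}\nabla\hat{u}_i$ belongs to $[W^{1,\infty}(T)]^d$. Elementwise integration by parts therefore gives
\begin{equation*}
\edual{\hat{u}_i}{\hat{v}_j}_{\widehat{\omega}_i} \;=\; -\sum_{T\in\widehat{\MM}_i}\int_T (\operatorname{div}\mathbf{K})(\nabla\hat{u}_i)\,\hat{v}_j \;+\; \sum_{F}\int_F \jump{\mathbf{K}\nabla\hat{u}_i\cdot\mathbf{n}_F}\,\hat{v}_j,
\end{equation*}
where the face sum ranges over all $(d-1)$-faces of elements in $\widehat{\MM}_i$ lying in $\overline{\widehat{\omega}_i}$, boundary faces being handled with the outward normal. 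The volume term is immediately bounded by $C[\widehat{\omega}_i]\,\|\nabla\hat{u}_i\|_{\widehat{\omega}_i}\|\hat{v}_j\|_{\widehat{\omega}_i}$, which fits the target bound because $\delta^{j-i}\widehat{h}_j^{-1}\gtrsim 1$ uniformly (since $\widehat{h}_j\leq\widehat{h}_0$ and $\delta^{j}\,2^{j/d}=2^{j/(2d)}\geq 1$).

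For the face term, fix $F\in\partial T_F^{\pm}$ with $T_F^{\pm}\in\widehat{\TT}_i$. Using $|F|\simeq\widehat{h}_i^{d-1}$ and $|\nabla\hat{u}_i|_T|^2\simeq\widehat{h}_i^{-d}\|\nabla\hat{u}_i\|_{L^2(T)}^2$, one gets $\|\jump{\mathbf{K}\nabla\hat{u}_i\cdot\mathbf{n}_F}\|_{L^2(F)}^2\lesssim C[\widehat{\omega}_i]^2\,\widehat{h}_i^{-1}\|\nabla\hat{u}_i\|_{L^2(T_F^+\cup T_F^-)}^2$. On the other side, because $j\geq i$ the face $F$ decomposes as a disjoint union of faces $F'$ of elements $T_{F'}\in\widehat{\TT}_j$, and a discrete trace inequality applied element by element yields $\|\hat{v}_j\|_{L^2(F)}^2\lesssim\widehat{h}_j^{-1}\|\hat{v}_j\|_{L^2(\omega_F^j)}^2$, where $\omega_F^j$ denotes the single-layer strip of $\widehat{\TT}_j$-elements adjacent to $F$. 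Applying Cauchy--Schwarz first on each face and then over all faces of $\widehat{\MM}_i$, and using the uniformly bounded overlap of $\{T_F^{\pm}\}$ and $\{\omega_F^j\}$ (consequence of $\gamma$-shape regularity and quasi-uniformity of $\widehat{\TT}_i,\widehat{\TT}_j$, with constants governed by $\Cqu$), produces
\begin{equation*}
\Big|\sum_F\int_F \jump{\mathbf{K}\nabla\hat{u}_i\cdot\mathbf{n}_F}\,\hat{v}_j\Big| \;\lesssim\; C[\widehat{\omega}_i]\,\widehat{h}_i^{-1/2}\widehat{h}_j^{-1/2}\,\|\nabla\hat{u}_i\|_{\widehat{\omega}_i}\|\hat{v}_j\|_{\widehat{\omega}_i} \;=\; C[\widehat{\omega}_i]\,\delta^{j-i}\widehat{h}_j^{-1}\,\|\nabla\hat{u}_i\|_{\widehat{\omega}_i}\|\hat{v}_j\|_{\widehat{\omega}_i},
\end{equation*}
which combined with the volume bound yields the claimed estimate.

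The main obstacle is the combinatorial bookkeeping needed to guarantee that only the local constant $C[\widehat{\omega}_i]$ (rather than a global diffusion contrast) appears: one must verify both $\sum_F\|\nabla\hat{u}_i\|_{L^2(T_F^{\pm})}^2\lesssim\|\nabla\hat{u}_i\|_{\widehat{\omega}_i}^2$ and $\sum_F\|\hat{v}_j\|_{L^2(\omega_F^j)}^2\lesssim\|\hat{v}_j\|_{\widehat{\omega}_i}^2$ with constants depending only on $d$, $\TT_0$, and $\gamma$-shape regularity, and treat boundary faces $F\subset\partial\widehat{\omega}_i$ with the outward-normal convention so that the locality of $C[\widehat{\omega}_i]$ is preserved throughout the argument. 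The remaining ingredients (discrete trace, inverse estimates, finite overlap) are standard on quasi-uniform meshes and contribute only to the generic constant $\Cscs$.
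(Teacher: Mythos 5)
The paper does not actually prove this lemma; it is quoted as known from \cite[Lemma~5.6]{IMPS24} and \cite{Pau25}, and your argument reconstructs essentially the proof used there: elementwise integration by parts on $\widehat{\TT}_i$ into a $\div(\vec{K})$ volume term plus a normal-flux jump term over the faces, followed by an inverse estimate on $\widehat{\TT}_i$ and a discrete trace estimate on $\widehat{\TT}_j$, whose mesh-size mismatch $\widehat{h}_i^{-1/2}\widehat{h}_j^{-1/2}=\widehat{h}_j^{-1}(\widehat{h}_j/\widehat{h}_i)^{1/2}\simeq\widehat{h}_j^{-1}\,\delta^{j-i}$ is exactly the source of the decay factor, and whose constants correctly reduce to the local quantity $C[\widehat{\omega}_i]$. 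The outline is sound; the one point that genuinely requires the care you flag is to apply the trace inequality one-sidedly on faces $F\subset\partial\widehat{\omega}_i$, taking the flux trace and the strip $\omega_F^j$ from the interior side only, so that both $C[\widehat{\omega}_i]$ and the norm $\|\widehat{v}_j\|_{\widehat{\omega}_i}$ stay localized to $\widehat{\omega}_i$.
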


For $0 \leq m \leq M$, we define the operator $\GG_{\ell,m}^1: \XX_{\ell}^p \rightarrow \XX_{\ell}^{1}$ by
\begin{equation}\label{eq: G operator}
	\GG_{\ell,m}^1 \coloneqq \sum_{\ell'=1}^{\ell-1} \sum_{\substack{z \in \VV_{\ell'}^+ \\ g_{\ell',z} = m}} \PP_{\ell',z}^1.
\end{equation} 
It immediately follows that
\begin{equation}\label{eq: AS operator with generations}
	\SS_{\ell}^{\AS}  = \PP_0^1 + \sum_{m=0}^M \GG_{\ell,m}^1 + \sum_{z \in \VV_{\ell}} \PP_{\ell,z}^p.
\end{equation}
We show a strengthened Cauchy--Schwarz inequality for the operators $\GG_{\ell,m}^1$. 

\begin{lemma}[Strengthened Cauchy--Schwarz inequality for $\boldsymbol{\GG_{\ell,m}^1}$]\label{lemma: strengthened CS}
	For $0 \leq k \leq m \leq M$, it holds that
	\begin{equation}
		\edual{\widehat{v}_k}{\GG_{\ell,m}^1 \widehat{w}_k} \leq \Cloc \, \Cscs \, \delta^{m-k} \enorm{\widehat{v}_k} \: \enorm{\widehat{w}_k} 
		\quad \text{for all } \widehat{v}_k,\widehat{w}_k \in \widehat{\XX}_k^1,
	\end{equation}
	where the constant $\Cloc$ is defined in~\eqref{eq: local constant 2} and the constant $\Cscs>0$ depends only on $\Omega$, $d$, $\TT_0$,  and $\gamma$-shape regularity.
\end{lemma}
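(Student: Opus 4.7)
The plan is to reduce the inequality to a sum of patchwise applications of the uniform strengthened Cauchy--Schwarz inequality~\eqref{eq: uniform strengthenedCS} and then exploit finite overlap to regain the global bound. First I would expand the bilinear form as
\begin{equation*}
    \edual{\widehat v_k}{\GG_{\ell,m}^1\widehat w_k} = \sum_{\ell'=1}^{\ell-1}\sum_{\substack{z\in\VV_{\ell'}^+\\ g_{\ell',z}=m}} \edual{\widehat v_k}{\PP_{\ell',z}^1 \widehat w_k}_{\omega_{\ell'}(z)},
\end{equation*}
where I have used that each local projection is supported on the patch $\omega_{\ell'}(z)$. To iterate the uniform strengthened CS at levels $i=k$ and $j=m$, I need to embed each summand into the uniform-mesh framework.

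Next I would use Lemma~\ref{lemma: patches} to obtain $\omega_{\ell'}(z)\subseteq\widehat\omega_m^n(z)$ and, since the minimum element-level in $\TT_{\ell'}(z)$ is at least $m-\Cspan$, the embedding $\XX_{\ell',z}^1\hookrightarrow\widehat{\XX}_m^1$ (extending the local function by zero). For the coarse factor I define $\widetilde\omega_k(z)$ as the smallest union of elements $T\in\widehat{\TT}_k$ covering $\widehat\omega_m^n(z)$; by shape regularity and the uniform bound on $n$, this consists of $\OO(1)$ simplices of $\widehat{\TT}_k$. Applying~\eqref{eq: uniform strengthenedCS} on $\widetilde\omega_k(z)$ together with a standard inverse estimate $\widehat h_m^{-1}\,\norm{\PP_{\ell',z}^1\widehat w_k}_{\omega_{\ell'}(z)}\lesssim \norm{\nabla\PP_{\ell',z}^1\widehat w_k}_{\omega_{\ell'}(z)}$ and the local spectral bounds on $\vec K$ on $\widetilde\omega_k(z)$ then gives, after collecting the eigenvalue factors into the quotient appearing in~\eqref{eq: local constant 2},
\begin{equation*}
    \edual{\widehat v_k}{\PP_{\ell',z}^1\widehat w_k}_{\omega_{\ell'}(z)} \le \Cscs\,\Cloc\,\delta^{m-k}\,\enorm{\widehat v_k}_{\widetilde\omega_k(z)}\,\enorm{\PP_{\ell',z}^1\widehat w_k}.
\end{equation*}

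To conclude, I would sum over all admissible $(\ell',z)$ and apply the discrete Cauchy--Schwarz inequality, so the right-hand side factors into $(\sum_{\ell',z}\enorm{\widehat v_k}_{\widetilde\omega_k(z)}^2)^{1/2}(\sum_{\ell',z}\enorm{\PP_{\ell',z}^1\widehat w_k}^2)^{1/2}$. The first factor is controlled by $\enorm{\widehat v_k}^2$ via the finite overlap of the patches $\widetilde\omega_k(z)$ (bounded in terms of $\gamma$-shape regularity), while the second is bounded by $\enorm{\widehat w_k}^2$ since the local projections $\PP_{\ell',z}^1\widehat w_k$ have pairwise mutually finite overlap and satisfy $\enorm{\PP_{\ell',z}^1\widehat w_k}\le\enorm{\widehat w_k}_{\omega_{\ell'}(z)}$ by Galerkin orthogonality.

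The main obstacle I expect is the bookkeeping in the overlap argument: each vertex $z$ may contribute with different $\ell'$ (but a fixed generation $m$ fixes the coarse scale $\widehat h_m$), and one must verify that the $\widetilde\omega_k(z)$ do have uniformly bounded overlap independently of $\ell$ and $m$. This is ensured by the fact that a vertex carrying generation $m$ must be a vertex of $\widehat{\TT}_m$, so the counting of such $z$ reduces to counting vertices of $\widehat{\TT}_m$ lying in a fixed element of $\widehat{\TT}_k$, which is $\OO(1)$ by shape regularity and the admissibility of $\TT_0$. Once this overlap bound is in place, the eigenvalue quotients assemble into the constant $\Cloc$ as defined in~\eqref{eq: local constant 2}, and $\Cscs$ is absorbed exactly as in~\eqref{eq: uniform strengthenedCS}.
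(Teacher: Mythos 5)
The overall architecture of your plan — reduce to the uniform strengthened CS~\eqref{eq: uniform strengthenedCS}, control $\widehat h_m^{-1}\,\norm{\PP_{\ell',z}^1\widehat w_k}$ by $\enorm{\PP_{\ell',z}^1\widehat w_k}$ via a Poincar\'e-type estimate (you call it an inverse estimate, but the scaling goes the other way: it is Friedrichs/Poincar\'e on the patch, which works because $\PP_{\ell',z}^1\widehat w_k$ has zero trace on $\partial\omega_{\ell'}(z)$), and then recombine the patchwise contributions — does follow the paper's Step~1, and your bound $\enorm{\PP_{\ell',z}^1\widehat w_k}\le\enorm{\widehat w_k}_{\omega_{\ell'}(z)}$ is exactly~\eqref{eq: estimate local projections}.

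However, the overlap step that is supposed to close your argument is wrong, and the error is not merely bookkeeping. You replace the global factor $\enorm{\widehat v_k}$ by the localized $\enorm{\widehat v_k}_{\widetilde\omega_k(z)}$, where $\widetilde\omega_k(z)$ is a fixed number of simplices of the \emph{coarse} mesh $\widehat{\TT}_k$, and then use discrete Cauchy--Schwarz over the pairs $(\ell',z)$, which requires $\sum_{\ell',z}\enorm{\widehat v_k}_{\widetilde\omega_k(z)}^2\lesssim\enorm{\widehat v_k}^2$. This needs uniformly bounded overlap of the sets $\widetilde\omega_k(z)$ \emph{as $z$ ranges over the contributing generation-$m$ vertices}. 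But the contributing $z$ lie in $\widehat{\VV}_m$, and the number of vertices of $\widehat{\TT}_m$ contained in (or within distance $n\widehat h_m$ of) a single element $T\in\widehat{\TT}_k$ is of order $(\widehat h_k/\widehat h_m)^d\simeq 2^{\,m-k}$, \emph{not} $\OO(1)$ — your claim that ``counting vertices of $\widehat{\TT}_m$ lying in a fixed element of $\widehat{\TT}_k$ is $\OO(1)$'' is simply false when $m>k$. Consequently your first Cauchy--Schwarz factor picks up a factor $\simeq 2^{(m-k)/2}$, and combined with $\delta^{m-k}=2^{-(m-k)/(2d)}$ the final bound reads $\lesssim 2^{(m-k)(1/2-1/(2d))}\enorm{\widehat v_k}\enorm{\widehat w_k}$, which diverges for $d\ge 2$ and loses the crucial geometric decay in $m-k$ even for $d=1$.

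The paper avoids this trap by \emph{not} localizing the $\widehat v_k$-factor: the patchwise estimate~\eqref{eq: strengthened CS intermediate} keeps the global $\enorm{\widehat v_k}$, so that after summing over $(\ell',z)$ it remains to control $\sum_{\ell',z}\enorm{\PP_{\ell',z}^1\widehat w_k}$ (a single, unsquared sum), and this is handled via the one-dimensional-projection identity $\enorm{\PP_{\ell',z}^1\widehat w_k}=|\edual{\widehat w_k}{\varphi_{\ell',z}^1}|/\enorm{\varphi_{\ell',z}^1}$, Lemma~\ref{lemma: patches}, the reindexing~\eqref{eq: change numbering}, and the uniform level-multiplicity bound $\Clev$ from~\cite{WC06}. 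In particular, the paper's Step~2 never invokes overlap of coarse-scale patches, which is precisely where your version breaks.
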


\begin{proof}[\textbf{Proof}]
	Let $0 \leq k \leq m \leq M$ and $\widehat{v}_k, \widehat{w}_k \in \widehat{\XX}_k^1$.
	The proof is split into two steps.

	\textbf{Step 1:}
	Let $\ell' \in \{1, \dots, \ell-1\}$ and $z \in \VV_{\ell'}^+$ with $g_{\ell',z} = m$. 
	Recall $C[\cdot]$ from~\eqref{eq: local constant domain}.
	Since $\PP_{\ell',z}^1 \widehat{w}_k \in \widehat{\XX}_{m,z}^1$, the strengthened Cauchy--Schwarz inequality~\eqref{eq: uniform strengthenedCS} implies
	\begin{equation*}
		\edual{\widehat{v}_k}{\PP_{\ell',z}^1 \widehat{w}_k}_T \overset{\eqref{eq: uniform strengthenedCS}}{\lesssim} 
		C[T]\delta^{m-k} \widehat{h}_m^{-1}\norm{ \nabla \widehat{v}_k}_T \: \norm{\PP_{\ell',z}^1 \widehat{w}_k}_T \quad \text{for all } T \in \widehat{\TT}_k.
	\end{equation*}
	As $\supp \PP_{\ell',z}^1 \widehat{w}_k \subseteq \overline{\omega_{\ell'}(z)}$, we only need to consider $T \in \widehat{\TT}_k$
	with $|T \cap \overline{\omega_{\ell'}(z)}| > 0$. 
	For these elements, we can find a vertex $z_0 \in \VV_0$ depending only on $z$ such that $T \cup \omega_{\ell'}(z) \subseteq \overline{\omega_0(z_0)}$. 
	Using the local norm equivalence, we obtain
	\begin{equation*}
		\norm{\nabla \widehat{v}_k}_T \leq \big(\inf_{y \in T} \lambda_{\min}(\vec{K}(y))\big)^{1/2} \, \enorm{\widehat{v}_k}_T.
	\end{equation*}
	Summation over $T \in \widehat{\TT}_k$, the existence of $z_0$, and the discrete Cauchy-Schwarz inequality yield
	\begin{align*}
		\edual{\widehat{v}_k}{\PP_{\ell',z}^1 \widehat{w}_k} &\lesssim C[\omega_{0}(z_0)] \big(\inf_{y \in \omega_0(z_0)} \lambda_{\min}(\vec{K}(y))\big)^{1/2}\delta^{m-k} \widehat{h}_{m}^{-1} \, \sum_{T \in \widehat{\TT}_k} \enorm{\widehat{v}_k}_T \norm{\PP_{\ell',z}^1 \widehat{w}_k}_T \\
		&\leq C[\omega_{0}(z_0)] \big(\inf_{y \in \omega_0(z_0)} \lambda_{\min}(\vec{K}(y))\big)^{1/2} \delta^{m-k} \widehat{h}_m^{-1}  \enorm{\widehat{v}_k} \norm{\PP_{\ell',z}^1 \widehat{w}_k}.
	\end{align*}
	The generation constraint $g_{\ell',z} = m$ and quasi-uniformity lead to $\widehat{h}_m \simeq h_{\ell',z}$. 
	Due to the local support of $\PP^1_{\ell',z}$, the Poincaré inequality, and the local norm equivalence, we obtain
	\begin{equation*}
		\widehat{h}_m^{-1} \norm{\PP_{\ell',z}^1 \widehat{w}_k} \simeq \widehat{h}_{\ell',z}^{-1} \norm{\PP_{\ell',z}^1 \widehat{w}_k}_{\omega_{\ell'}(z)} 
		\lesssim \norm{ \nabla \PP_{\ell',z}^1 \widehat{w}_k}
		\leq \big(\inf_{y \in \omega_0(z_0)} \lambda_{\min}(\vec{K}(y))\big)^{1/2} \: \enorm{\PP_{\ell',z}^1 \widehat{w}_k}
	\end{equation*}
	and thus, with $\Cloc$ from~\eqref{eq: local constant 2},
	\begin{equation}\label{eq: strengthened CS intermediate}
		\edual{\widehat{v}_k}{\PP_{\ell',z}^1 \widehat{w}_k} \lesssim \Cloc \, \delta^{m-k} \enorm{\widehat{v}_k} \: \enorm{\PP_{\ell',z}^1 \widehat{w}_k}.
	\end{equation}

	\textbf{Step 2:}
	Based on~\eqref{eq: strengthened CS intermediate} and the definition~\eqref{eq: G operator} of $\GG_{\ell,m}^1$, it only remains to prove that
	\begin{equation*}
		\sum_{\ell'=1}^{\ell-1} \sum_{\substack{z \in \VV_{\ell'}^+ \\ g_{\ell',z} = m}} \enorm{\PP_{\ell',z}^1 \widehat{w}_k} \lesssim \enorm{\widehat{w}_k}.
	\end{equation*}
	The definition~\eqref{eq: local projections} of $\PP_{\ell',z}^1$ implies $\PP_{\ell',z}^1 v = \frac{\edual{v}{\varphi^1_{\ell',z}}}{\enorm{\varphi^1_{\ell',z}}^2}\varphi^1_{\ell',z}$. 
	Since $g_{\ell',z}= m$, Lemma~\ref{lemma: patches} yields
	\begin{equation}\label{eq: estimate local projections}
		\enorm{\PP_{\ell',z}^1 \widehat{w}_k}  = \frac{|\edual{\widehat{w}_k}{\varphi^1_{\ell',z}}|}{\enorm{\varphi^1_{\ell',z}}} \leq \enorm{\widehat{w}_k}_{\omega_{\ell'}(z)} 
		\leq \enorm{\widehat{w}_k}_{\widehat{\omega}_m^n(z)}.
	\end{equation}
	Let $z \in \VV_{\ell}$ and $0 \leq j \leq M$.
	To keep track of the levels, where the patch associated to the vertex $z$ has been modified in the refinement and remains of generation $j$, we define 
	\begin{equation*}
		\fL_{\underline{\ell},\overline{\ell}}(z,j) \coloneqq \{\ell' \in \{\underline{\ell}, \dots, \overline{\ell}\} : z \in \VV_{\ell'}^+ \text{ and } g_{\ell',z} = j\} 
		\quad \text{for all } 0 \leq \underline{\ell} \leq \overline{\ell} \leq \ell.
	\end{equation*}
	According to~\cite[Lemma 3.1]{WC06}, there exists a constant $\Clev >0 $ depending only on $\gamma$-shape regularity such that 
	\begin{equation}\label{eq: uniform bound on levels}
		\max_{\substack{z \in \VV_{\ell} \\ 0 \leq j \leq M}} \# (\fL_{0,\ell}(z,j)) \leq \Clev < \infty.
	\end{equation} 
	Moreover, it holds that 
	\begin{align}\label{eq: change numbering}
		\begin{split}
		\{(\ell',z) \in \N_0 \times \VV_{\ell} : \ell' &\in \{\underline{\ell}, \dots, \overline{\ell}\}, z \in \VV_{\ell'}^+ \text{ with } g_{\ell',z} = j\} \\
		&=\{(\ell',z) \in \N_0 \times \VV_{\ell} : z \in \widehat{\VV}_j, \ell' \in \fL_{\underline{\ell}, \overline{\ell}}(z,j)\}.
		\end{split}
	\end{align}
	With finite patch overlap, this leads to
	\begin{align*}
		\sum_{\ell'=1}^{\ell-1} \sum_{\substack{z \in \VV_{\ell'}^+ \\ g_{\ell',z} = m}} \enorm{\PP_{\ell',z}^1 \widehat{w}_k} \, &\overset{\mathclap{\eqref{eq: estimate local projections}}}{\leq} \; \sum_{\ell'=1}^{\ell-1} \sum_{\substack{z \in \VV_{\ell'}^+ \\ g_{\ell',z} = m}} \enorm{\widehat{w}_k}_{\widehat{\omega}_m^n(z)} \overset{\eqref{eq: change numbering}}{=} \sum_{z \in \widehat{\VV}_m} \sum_{\ell' \in \LL_{1,\ell-1}(z,m)} \enorm{\widehat{w}_k}_{\widehat{\omega}_m^n(z)} \\ 
		&\overset{\eqref{eq: uniform bound on levels}}{\leq} \; \Clev \sum_{z \in \widehat{\VV}_m}  \enorm{\widehat{w}_k}_{\widehat{\omega}_m^n(z)} \lesssim \enorm{\widehat{w}_k}.
	\end{align*}
	Overall, we hence obtain
	\begin{equation*}
		\edual{\widehat{v}_k}{\GG_{\ell,m}^1 \widehat{w}_k} \lesssim \delta^{m-k} \enorm{\widehat{v}_k} \; \enorm{\widehat{w}_k}.
	\end{equation*}
	This concludes the proof.
\end{proof}

%%%%%%%%%%%%%%%%%%%%%%%%%%%%%%%%%%%%%%%%%%%%%%%%%%%%%%%%%%%%%%%%%%%%%%%%%%%%%%%%%%%
\subsection{Additive Schwarz operator}\label{sec: proof of optimality}
%%%%%%%%%%%%%%%%%%%%%%%%%%%%%%%%%%%%%%%%%%%%%%%%%%%%%%%%%%%%%%%%%%%%%%%%%%%%%%%%%%%
We require the following proposition on the additive Schwarz operator~\eqref{eq: AS operator} to finally show Theorem~\ref{theorem: AS condition number bound} in the next section.
For the proof of Proposition~\ref{theorem: AS operator}, we adapt~\cite{FFPE17} for the boundary element setting with energy space $\widetilde{H}^{1/2}(\Gamma)$ to our setting with energy space $H_0^1(\Omega)$.

\begin{proposition}[Properties of the additive Schwarz operator]\label{theorem: AS operator}
	The operator $\SS_{\ell}^{\AS} $ defined in~\eqref{eq: AS operator} is linear, bounded, and symmetric, i.e., there holds
	\begin{equation}\label{eq: AS operator symmetric}
		\edual{\SS_{\ell}^{\AS} v}{w} = \edual{v}{\SS_{\ell}^{\AS} w} \quad \text{for all } v,w \in \XX.
	\end{equation}
	Moreover, it holds that
	\begin{equation}\label{eq: AS operator bounds}
		c \: \enorm{v_{\ell}}^2 \leq \edual{\SS_{\ell}^{\AS}  v_{\ell}}{v_{\ell}} \leq C \: \enorm{v_{\ell}}^2 \quad \text{for all } v_{\ell} \in \XX_{\ell}^p,
	\end{equation}
	where the constants $c$ and $C$ depend only on $\Omega$, $\TT_0$, $d$, $\gamma$-shape regularity, $\Cloc$, and $\CLoc$.
\end{proposition}

\begin{proof}[\textbf{Proof}]
The proof is divided into three steps.

\textbf{Step 1 (basic properties):} The linearity, boundedness, and symmetry follow directly from the additive structure of $\SS_{\ell}^{\AS}$ and the respective property of the levelwise operators $\SS_{\ell'}$ from~\eqref{eq: levelwise operators}.

\textbf{Step 2 (lower bound in~(\ref{eq: AS operator bounds})):} We show the lower bound using Lemma~\ref{lemma: lion}. 
	For our application, we consider the space decomposition
	\begin{equation*}
		\XX_{\ell}^p = \XX_0^1 + \sum_{\ell'=1}^{\ell-1} \sum_{z \in \VV_{\ell'}^+} \XX_{\ell',z}^1 + \sum_{z \in \VV_{\ell}} \XX_{\ell,z}^p.
	\end{equation*}
	In~\cite[Proposition 5.5]{IMPS24} and~\cite{Pau25}, it is shown that there exists an $hp$-robust stable decomposition, i.e., for every $v_{\ell} \in \XX_{\ell}^p$, there exist functions $v_0 \in \XX_0^1$, $v_{\ell',z} \in \XX_{\ell',z}^1$, and $v_{\ell,z} \in \XX_{\ell,z}^p$ such that $v_{\ell} = v_0 + \sum_{\ell'=1}^{\ell-1}\sum_{z \in \VV_{\ell'}^+} v_{\ell',z}+ \sum_{z \in \VV_{\ell}}v_{\ell,z}$ and
	\begin{equation*}
		\enorm{v_0}^2 + \sum_{\ell'=1}^{\ell-1} \sum_{z \in \VV_{\ell'}^+} \enorm{v_{\ell',z}}^2 + \sum_{z \in \VV_{\ell}} \enorm{v_{\ell,z}}^2 \leq \widetilde{C} \, \enorm{v_{\ell}}^2,
	\end{equation*}
	where the constant $\widetilde{C}>0$ depends only on the initial triangulation $\TT_0$, $\gamma$-shape regularity, and $\CLoc$. 
	Therefore, Lemma~\ref{lemma: lion} yields the lower bound in~\eqref{eq: AS operator bounds} with $c \coloneqq 1/\widetilde{C}$.

\textbf{Step 2 (upper bound in~(\ref{eq: AS operator bounds})):} Let $v_{\ell} \in \XX_{\ell}^p$. 
	Recall $M = \max_{T \in \TT_{\ell}} \level(T)$ and observe $\GG_{\ell,m}^1 v_{\ell} \in \widehat{\XX}_m^1 \subseteq \widehat{\XX}_M^1$. 
	Utilizing~\eqref{eq: AS operator with generations}, we have 
	\begin{equation*}
		\edual{\SS_{\ell}^{\AS} v_{\ell}}{v_{\ell}} = \edual{\PP_0^1 v_{\ell}}{v_{\ell}} + \sum_{m=0}^{M} \edual{\GG_{\ell,m}^1 v_{\ell}}{v_{\ell}} +  \sum_{z \in \VV_{\ell}} \edual{\PP_{\ell,z}^p v_{\ell}}{v_{\ell}}.
	\end{equation*}
	To complete the proof, we must bound each of the summands by $\lesssim \enorm{v_{\ell}}^2$.

	On the initial level, we have $\edual{\PP_0^1 v_{\ell}}{v_{\ell}} \leq \enorm{v_{\ell}}^2$.

	On the finest level $\ell$, we apply the Cauchy--Schwarz inequality, Young inequality for $\mu>0$, and finite patch overlap to see
	\begin{align*}
		\sum_{z \in \VV_{\ell}} &\edual{\PP_{\ell,z}^p v_{\ell}}{v_{\ell}} \leq \enorm{v_{\ell}} \, \enormbig{\sum_{z \in \VV_{\ell}}\PP_{\ell,z}^p v_{\ell}} \leq \frac{\mu}{2} \enormbig{\sum_{z \in \VV_{\ell}} \PP_{\ell,z}^p v_{\ell}}^2 + \frac{1}{2\mu} \enorm{v_{\ell}}^2 \\
		&\leq  \frac{\mu}{2}(d+1) \sum_{z \in \VV_{\ell}} \, \enorm{\PP_{\ell,z}^p v_{\ell}}^2 + \frac{1}{2\mu} \enorm{v_{\ell}}^2  =  \frac{\mu}{2}(d+1) \sum_{z \in \VV_{\ell}} \edual{\PP_{\ell,z}^p v_{\ell}}{v_{\ell}} + \frac{1}{2\mu} \enorm{v_{\ell}}^2.
	\end{align*}
	With $\mu = (d+1)^{-1}$, this proves
	\begin{equation*}
		\sum_{z \in \VV_{\ell}} \edual{\PP_{\ell,z}^p v_{\ell}}{v_{\ell}} \leq (d+1) \, \enorm{v_{\ell}}^2.
	\end{equation*}

	It remains to consider the intermediate levels, where we will exploit the strengthened Cauchy--Schwarz inequality from Lemma~\ref{lemma: strengthened CS}.
	Let us denote by $\widehat{\QQ}_{m}: H_0^1(\Omega) \rightarrow \widehat{\XX}_m^1$ the Galerkin projections for the uniform meshes, i.e., $\edual{\widehat{\QQ}_m v}{\widehat{w}_m} = \edual{v}{\widehat{w}_m}$ for all $v \in H_0^1(\Omega)$ and all $\widehat{w}_m \in \widehat{\XX}_m^1$. 
	With $\widehat{\QQ}_{-1} \coloneqq 0$, it holds that
	\begin{equation*}
		\widehat{\QQ}_m = \sum_{k=0}^m (\widehat{\QQ}_k - \widehat{\QQ}_{k-1}).
	\end{equation*}
	Since the operators $\PP_{\ell',z}^1$ are symmetric, the bilinear form $\edual{\GG^1_{\ell,m} \cdot \,}{\cdot}$ is symmetric on the space $\XX_{\ell}^p$. 
	The positivity 
	\begin{equation*}
		\edual{\GG_{\ell,m}^1 v_{\ell}}{v_{\ell}} = \sum_{\ell'=1}^{\ell-1}\sum_{\substack{z \in \VV_{\ell'}^+ \\ g_{\ell',z} = m}} \edual{\PP_{\ell',z}^1 v_{\ell}}{v_{\ell}} 
		= \sum_{\ell'=1}^{\ell-1} \sum_{\substack{z \in \VV_{\ell'}^+ \\ g_{\ell',z}=m}} \enorm{\PP_{\ell',z}^1 v_{\ell}}^2 \geq 0 \quad \text{for all } v_{\ell} \in \XX_{\ell}^p
	\end{equation*}
	implies the Cauchy--Schwarz inequality
	\begin{equation}\label{eq: CS for operator G}
		\edual{\GG_{\ell,m}^1 v_{\ell}}{w_{\ell}} \leq \edual{\GG_{\ell,m}^1 v_{\ell}}{v_{\ell}}^{1/2} \edual{\GG_{\ell,m}^1 w_{\ell}}{w_{\ell}}^{1/2} \quad \text{for all } v_{\ell},w_{\ell} \in \XX_{\ell}^p.
	\end{equation}
	Consequently, we can estimate the sum over the intermediate levels by
	\begin{align*}
		\sum_{m=0}^{M} \edual{\GG_{\ell,m}^1 v_{\ell}}{v_{\ell}} &= \; \sum_{m=0}^{M} \edual{\GG^1_{\ell,m}v_{\ell}}{\widehat{\QQ}_m v_{\ell}} = \sum_{m=0}^M \sum_{k=0}^m \edual{\GG_{\ell,m}^1 v_{\ell}}{(\widehat{\QQ}_k - \widehat{\QQ}_{k-1})v_{\ell}} \\
		&\overset{\mathclap{\eqref{eq: CS for operator G}}}{\leq} \; \sum_{m=0}^M \sum_{k=0}^m \edual{\GG_{\ell,m}^1 v_{\ell}}{v_{\ell}}^{1/2} 
		\edual{\GG_{\ell,m}^1 (\widehat{\QQ}_k- \widehat{\QQ}_{k-1}) v_{\ell}}{(\widehat{\QQ}_k- \widehat{\QQ}_{k-1}) v_{\ell}}^{1/2}. 
	\end{align*}
	As $(\widehat{\QQ}_{k}-\widehat{\QQ}_{k-1})v_{\ell} \in \widehat{\XX}_k^1$, the strengthened Cauchy--Schwarz inequality from Lemma~\ref{lemma: strengthened CS} yields
	\begin{equation*}
		\edual{\GG_{\ell,m}^1 (\widehat{\QQ}_{k}-\widehat{\QQ}_{k-1})v_{\ell} }{(\widehat{\QQ}_{k}-\widehat{\QQ}_{k-1})v_{\ell}} \lesssim \delta^{m-k} \, \enorm{(\widehat{\QQ}_{k}-\widehat{\QQ}_{k-1})v_{\ell}}^2.
	\end{equation*}
	Moreover, we observe that
	\begin{align*}
		\enorm{(\widehat{\QQ}_k - \widehat{\QQ}_{k-1})v_{\ell}}^2 &= \edual{\widehat{\QQ}_k v_{\ell}}{ v_{\ell}}  
		- 2\edual{v_{\ell}}{\widehat{\QQ}_{k-1} v_{\ell}} + \edual{\widehat{\QQ}_{k-1} v_{\ell}}{ v_{\ell}} = \edual{(\widehat{\QQ}_k - \widehat{\QQ}_{k-1})v_{\ell}}{v_{\ell}}.
	\end{align*}
	Applying the Young inequality and the summability of the geometric series, we get
	\begin{align*}
		\sum_{m=0}^M \edual{\GG^1_{\ell,m} v_{\ell}}{v_{\ell}} &\lesssim  \sum_{m=0}^M \sum_{k=0}^m \delta^{(m-k)/2} \edual{\GG_{\ell,m}^1 v_{\ell}}{v_{\ell}}^{1/2} \edual{(\widehat{\QQ}_k-\widehat{\QQ}_{k-1})v_{\ell}}{v_{\ell}}^{1/2} \\
		&\leq \frac{\mu}{2} \sum_{m=0}^M \sum_{k=0}^{m} \delta^{m-k} \edual{\GG_{\ell,m}^1 v_{\ell}}{v_{\ell}} + 
		\frac{1}{2\mu} \sum_{m=0}^M \sum_{k=0}^m \delta^{m-k} \edual{(\widehat{\QQ}_k-\widehat{\QQ}_{k-1})v_{\ell}}{v_{\ell}} \\
		&\lesssim \frac{\mu}{2} \edualbig{\sum_{m=0}^{M}\GG^1_{\ell,m} v_{\ell}}{v_{\ell}} + \frac{1}{2 \mu} \sum_{k=0}^M \sum_{m=k}^M \delta^{m-k} \edual{(\widehat{\QQ}_k- \widehat{\QQ}_{k-1})v_{\ell}}{v_{\ell}} \\
		&\lesssim  \frac{\mu}{2} \edualbig{\sum_{m=0}^{M}\GG^1_{\ell,m} v_{\ell}}{v_{\ell}} + \frac{1}{2 \mu} \edualbig{\sum_{k=0}^M (\widehat{\QQ}_k-\widehat{\QQ}_{k-1})v_{\ell}}{v_{\ell}}.
	\end{align*}
	Since $\widehat{\QQ}_M$ is an orthogonal projection, it holds that
	\begin{equation*}
		\edualbig{\sum_{k=0}^M (\widehat{\QQ}_k-\widehat{\QQ}_{k-1})v_{\ell}}{v_{\ell}} = \edual{\widehat{\QQ}_M v_{\ell}}{v_{\ell}} \leq \enorm{v_{\ell}}^2.
	\end{equation*}
	Choosing $\mu$ sufficiently small, we obtain 
	\begin{equation*}
		\sum_{m=0}^M \edual{\GG^1_{\ell,m} v_{\ell}}{v_{\ell}} \lesssim \enorm{v_{\ell}}^2.
	\end{equation*}
	This concludes the proof.
\end{proof}

\subsection{Proof of Theorem~\ref{theorem: AS condition number bound}}\label{subsec: proof AS}
	By definition~\eqref{eq: AS preconditioner}, it is clear that $\vec{B}^{\AS}_{\ell}$ is an SPD matrix.
	Let $\vec{x}_{\ell},\vec{y}_{\ell} \in \R^{N_{\ell}^p}$, $v_{\ell}\coloneqq \sum_{j=1}^{N_{\ell}^p} (\vec{x}_{\ell})_j \varphi^p_{\ell,j} \in \XX_{\ell}^p$ and $w_{\ell}\coloneqq \sum_{j=1}^{N_{\ell}^p} (\vec{y}_{\ell})_j \varphi^p_{\ell,j} \in \XX_{\ell}^p$.
	Due to the identity~\eqref{eq: connection AS operator and matrix} and the symmetry of $\SS_{\ell}^{\AS} $, it follows that
	\begin{equation*}
		(\vec{B}^{\AS}_{\ell} \vec{A}_{\ell} \vec{x}_{\ell}, \vec{y}_{\ell})_{\vec{A}_{\ell}} \eqreff{eq: connection AS operator and matrix}{=} \edual{\SS_{\ell}^{\AS} v_{\ell}}{w_{\ell}} \eqreff{eq: AS operator symmetric}{=} \edual{v_{\ell}}{\SS_{\ell}^{\AS} w_{\ell}} \eqreff{eq: connection AS operator and matrix}{=} (\vec{x}_{\ell}, \vec{B}^{\AS}_{\ell}\vec{A}_{\ell} \vec{y}_{\ell})_{\vec{A}_{\ell}}.
	\end{equation*}
	Thus, the matrix $\vec{B}^{\AS}_{\ell}\vec{A}_{\ell}$ is symmetric with respect to the scalar product $(\cdot, \cdot)_{\vec{A}_{\ell}}$. 
	We use~\cite[Theorem C.1]{TW05} to write down the expressions for the maximal and minimal eigenvalue of $\vec{B}^{\AS}_{\ell}\vec{A}_{\ell}$.  
	Together with the identity~\eqref{eq: connection AS operator and matrix} and the bounds in~\eqref{eq: AS operator bounds}, we get
	\begin{equation*}
		\lambda_{\min}(\vec{B}^{\AS}_{\ell}\vec{A}_{\ell}) = \min_{\substack{\vec{x}_{\ell} \in \R^{N_{\ell}^p} \\ \vec{x}_{\ell} \neq 0}} \frac{(\vec{B}^{\AS}_{\ell}\vec{A}_{\ell}\vec{x}_{\ell},\vec{x}_{\ell})_{\vec{A}_{\ell}}}{(\vec{x}_{\ell},\vec{x}_{\ell})_{\vec{A}_{\ell}}} \overset{\eqref{eq: connection AS operator and matrix}}{=} \min_{\substack{v_{\ell} \in \XX_{\ell}^p \\ v_{\ell} \neq 0}} \frac{\edual{\SS_{\ell}^{\AS} v_{\ell}}{v_{\ell}}}{\enorm{v_{\ell}}^2} \overset{\eqref{eq: AS operator bounds}}{\geq} c
	\end{equation*}
	and
	\begin{equation*}
		\lambda_{\max}(\vec{B}^{\AS}_{\ell}\vec{A}_{\ell}) = \max_{\substack{\vec{x}_{\ell} \in \R^{N_{\ell}^p} \\ \vec{x}_{\ell} \neq 0}} \frac{(\vec{B}^{\AS}_{\ell}\vec{A}_{\ell}\vec{x}_{\ell},\vec{x}_{\ell})_{\vec{A}_{\ell}}}{(\vec{x}_{\ell},\vec{x}_{\ell})_{\vec{A}_{\ell}}} \overset{\eqref{eq: connection AS operator and matrix}}{=} \max_{\substack{v_{\ell} \in \XX_{\ell}^p \\ v_{\ell} \neq 0}} \frac{\edual{\SS_{\ell}^{\AS} v_{\ell}}{v_{\ell}}}{\enorm{v_{\ell}}^2} \overset{\eqref{eq: AS operator bounds}}{\leq} C.
	\end{equation*}
	Hence, we also obtain that
	\begin{equation*}
		\operatorname{cond}_{\vec{A}_{\ell}}(\vec{B}^{\AS}_{\ell}\vec{A}_{\ell}) = \frac{\lambda_{\max}(\vec{B}^{\AS}_{\ell}\vec{A}_{\ell})}{\lambda_{\min}(\vec{B}^{\AS}_{\ell}\vec{A}_{\ell})} \leq \frac{C}{c}.
	\end{equation*}
	Uniform contraction~\eqref{eq: AS contraction} follows directly from Proposition~\ref{theorem: CG convergence} and thus concludes the proof.
\qed

%%%%%%%%%%%%%%%%%%%%%%%%%%%%%%%%%%%%%%%%%%%%%%%%%%%%%%%%%%%%%%%%%%%%%%%%%%%%%%%%%%%
%%%%%%%%%%%%%%%%%%%%%%%%%%%%%%%%%%%%%%%%%%%%%%%%%%%%%%%%%%%%%%%%%%%%%%%%%%%%%%%%%%%
\section{Adaptive FEM with contractive solver}\label{sec: AFEM}
%%%%%%%%%%%%%%%%%%%%%%%%%%%%%%%%%%%%%%%%%%%%%%%%%%%%%%%%%%%%%%%%%%%%%%%%%%%%%%%%%%%
%%%%%%%%%%%%%%%%%%%%%%%%%%%%%%%%%%%%%%%%%%%%%%%%%%%%%%%%%%%%%%%%%%%%%%%%%%%%%%%%%%%

In this section, we present the application of uniformly contractive solvers in AFEM and the optimal complexity of the resulting adaptive algorithm.
Let us denote by $\Psi_{\ell}: \XX_{\ell}^p \rightarrow \XX_{\ell}^p$ the iteration map of an iterative solver of linear complexity which is uniformly contractive, i.e., there exists a constant $q \in (0,1)$ such that
\begin{equation}\label{eq: uniform contraction}
	\enorm{u_{\ell}^{\star}-\Psi_{\ell}(u_{\ell})} \leq q \, \enorm{u_{\ell}^{\star} - u_{\ell}} \quad \text{for all } u_{\ell} \in \XX_{\ell}^p.
\end{equation}
We consider the refinement indicators of the standard residual error estimator
\begin{subequations}\label{eq: refinement indicators}
\begin{equation}
	\eta_{\ell}(T,u_{\ell})^2 \coloneqq h_T^2 \|f+ \div(\vec{K}\nabla u_{\ell})\|^2_T + h_T \|\jump{\vec{K}\nabla u_{\ell}} \cdot \vec{n}\|^2_{\partial T \cap \Omega} \quad \text{for } T \in \TT_{\ell},
\end{equation}
where $\vec{n}$ denotes the outer normal vector of the element $T$ and $\jump{\cdot}$ denotes the jump across the element boundary.
Define 
\begin{equation}
	\eta_{\ell}(\UU_{\ell}, u_{\ell})^2 \coloneqq \sum_{T \in \UU_{\ell}} \eta_{\ell}(T,u_{\ell})^2 \quad \text{for } \UU_{\ell} \subseteq \TT_{\ell} \text{ and all } u_{\ell} \in \XX_{\ell}^p.
\end{equation}
\end{subequations}
For $\UU_{\ell} = \TT_{\ell}$, we abbreviate $\eta_{\ell}(u_{\ell})^2 \coloneqq \eta_{\ell}(\TT_{\ell}, u_{\ell})^2$.
Consider the adaptive algorithm with iterative solver from, e.g.,~\cite{GHPS21}.

\begin{algorithm}[AFEM with optimal iterative solver]\label{algo: AFEM}
	\textbf{Input:} Initial triangulation $\TT_0$, polynomial degree $p \geq 1$, initial guess $u_0^0 \coloneqq 0$, adaptivity parameters $0 < \theta \leq 1$, $\Cmark \geq 1$, and $\mu > 0$. 
	Then, for all $\ell=0,1,2,\dots$, perform the following steps~\eqref{item: AFEM 1}--\eqref{item: AFEM 3}:
	\begin{enumerate}[label=\textnormal{(\roman*)}, ref = \textnormal{\roman*}]
		\item \label{item: AFEM 1} \textbf{Solve \& Estimate:} For all $k=1,2,3, \dots,$ \textbf{repeat}~\eqref{item: AFEM solver step}--\eqref{item: AFEM stopping criterion} \textbf{until}
		\begin{equation*}
			\enorm{u_{\ell}^k-u_{\ell}^{k-1}} \leq \mu \, \eta_{\ell}(u_{\ell}^k):
		\end{equation*}
		\begin{enumerate}[label=\textnormal{(\alph*)}, ref = \textnormal{\alph*}]
			\item \label{item: AFEM solver step} Compute $u_{\ell}^k \coloneqq \Psi_{\ell}(u_{\ell}^{k-1})$ with one step of the algebraic solver.
			\item \label{item: AFEM stopping criterion} Compute the refinement indicators $\eta_{\ell}(T,u_{\ell}^k)$ for all $T \in \TT_{\ell}$.
		\end{enumerate}
		Upon termination of the $k$-loop, define the index $\underline{k}[\ell] \coloneqq k \in \N$ and $u_{\ell}^{\underline{k}} \coloneqq u_{\ell}^{k}$.
		\item \label{item: AFEM 2} \textbf{Mark:} Employ Dörfler marking to determine a set $\MM_{\ell} \in \M_{\ell}[\theta, u_{\ell}^{\underline{k}}] \coloneqq \{\UU_{\ell} \subset \TT_{\ell} : \theta \eta_{\ell}(u_{\ell}^{\underline{k}})^2  \leq \eta_{\ell}(\UU_{\ell}, u_{\ell}^{\underline{k}})^2 \}$ that fulfills
		\begin{equation*}
			\# \MM_{\ell} \leq \Cmark \min_{\UU_{\ell} \in \M_{\ell}[\theta, u_{\ell}^{\underline{k}}]} \# \UU_{\ell}.
		\end{equation*}
		\item \label{item: AFEM 3} \textbf{Refine:} Generate $\TT_{\ell+1} \coloneqq \textnormal{\texttt{refine}}(\TT_{\ell},\MM_{\ell})$ by newest vertex bisection and employ nested iteration $u_{\ell+1}^0 \coloneqq u_{\ell}^{\underline{k}}$.
		\end{enumerate}
	\textbf{Output:} Sequence of triangulations $\{\TT_{\ell}\}_{\ell \geq 0}$ and discrete approximations $\{u_{\ell}^{\underline{k}}\}_{\ell \geq 0}$.
\end{algorithm}

In order to formulate optimal complexity, we first define the countably infinite set
\begin{equation*}
	\QQ \coloneqq \{(\ell,k) \in \N_0^2 : u_{\ell}^k \text{ is defined in Algorithm~\ref{algo: AFEM}}\}.
\end{equation*}
The set $\QQ$ can be equipped with the natural order 
\begin{equation*}
	(\ell',k') \leq (\ell,k) :\Longleftrightarrow u_{\ell'}^{k'} \text{ is computed earlier than or equal to } u_{\ell}^{k} \text{ in Algorithm~\ref{algo: AFEM}}.
\end{equation*}
Furthermore, we define the total step counter by
\begin{equation*}
	|\ell,k| \coloneqq \# \{(\ell',k') \in \QQ : (\ell',k') \leq (\ell,k)\} \in \N_0 \quad \text{for } (\ell,k) \in \QQ.
\end{equation*}
Finally, we introduce the notion of approximation classes following~\cite{BDDP02,BDD04,Ste07,CKNS08,axioms}.
For any rate $s>0$, define
\begin{equation*}
	\|u^{\star}\|_{\A_s} \coloneqq \sup_{N \in \N_0} \big((N+1)^s \min_{\TT_{\textnormal{opt}} \in \T_N(\TT_0)} \eta_{\textnormal{opt}}(u^{\star}_{\textnormal{opt}})\big),
\end{equation*}
where $\T_N(\TT_0) \coloneqq \{\TT_H \in \T(\TT_0) : \# \TT_H - \# \TT_0 \leq N\}$.
The notation \(\TT_H \in \T(\TT_0)\) abbreviates that \(\TT_H\) can be obtained from \(\TT_0\) by a finite number of newest vertex bisection steps.
If $\|u^{\star}\|_{\A_s} < \infty$, then the error estimator $\eta_{\textnormal{opt}}(u_{\textnormal{opt}}^{\star})$ decays with rate $-s$ with respect to the number of elements of a sequence of (practically unknown) optimal triangulations $\TT_{\textnormal{opt}}$.
However, due to the iterative solver, rates of the adaptive algorithm should rather be considered with respect to the computational time or, equivalently, the total computational cost.
Hence, we comment on the computational cost of implementing Algorithm~\ref{algo: AFEM}:
Calcuting the error estimator $\eta_{\ell}(u_{\ell}^{k})$ has (up to quadrature) cost $O(\# \TT_{\ell})$ as this consist only of element-wise operations.
The marking step can be implemented with cost $O(\# \TT_{\ell})$; see~\cite{Ste07} for $\Cmark = 2$ and~\cite{PP20} for $\Cmark = 1$.
Finally, the cost of mesh-refinement by NVB is also $O(\# \TT_{\ell})$; see, e.g.,~\cite{Ste08, DSG23}.
If one step of the algebraic solver $\Psi_{\ell}$ is of linear complexity, then total computational cost to compute $u_{\ell}^k$ via Algorithm~\ref{algo: AFEM} is given by
\begin{equation}\label{eq: comp cost}
	\cost(\ell,k) \coloneqq \sum_{\substack{(\ell',k')\in \QQ \\ |\ell',k'| \leq |\ell,k|}} \# \TT_{\ell'}.
\end{equation}
We can now state optimal complexity of Algorithm~\ref{algo: AFEM}. 
The proof fits within the setting of~\cite[Theorem 2.3]{BFMPS23} relying on~\cite[Theorem 8]{GHPS21} and is thus omitted here.

\begin{theorem}[Optimal complexity of Algorithm~\ref{algo: AFEM}]\label{thm: optimal complexity}
	Let $s > 0$. 
	Let the algebraic solver $\Psi_{\ell}$ be given by either GPCG with $\vec{B}_{\ell}^{\MG}$ preconditioner~\eqref{eq: MG preconditioner} or PCG with $\vec{B}^{\SMG}_{\ell}$ preconditioner~\eqref{eq: sMG preconditioner} or $\vec{B}^{\AS}_{\ell}$  preconditioner~\eqref{eq: AS preconditioner} satisfying linear complexity and uniform contraction~\eqref{eq: uniform contraction}.
	Define the quasi-error by
	\begin{equation}\label{eq:optimal1}
		{\sf H}_{\ell}^k \coloneqq \enorm{u_{\ell}^{\star}- u_{\ell}^k} + \eta_{\ell}(u_{\ell}^{\star}) \quad \text{for all } (\ell,k) \in \QQ.
	\end{equation}
	For arbitrary $0 < \theta \leq 1$, $\Cmark\geq 1$ and $\mu > 0$, there holds full R-linear convergence, i.e., there exist constants $\Clin  \geq 1$ and $0 < \qlin < 1$ such that
	\begin{equation}\label{eq:optimal2}
		{\sf H}_{\ell'}^{k'} \leq \Clin \, \qlin^{|(\ell',k')| - |(\ell,k)|} {\sf H}_{\ell}^k \quad \text{for all } (\ell,k), (\ell',k') \in \QQ \text{ with } (\ell',k') \geq (\ell,k).
	\end{equation}
	With $\Ccost \coloneqq \Clin (1-\qlin^{1/s})^{-s},$ this yields
	\begin{equation}\label{eq:optimal3}
		\sup_{(\ell,k) \in \QQ} (\# \TT_{\ell})^s {\sf H}_{\ell}^k \leq \sup_{(\ell,k) \in \QQ} \cost(\ell,k)^s {\sf H}_{\ell}^k \leq \Ccost \sup_{(\ell,k) \in \QQ} (\# \TT_{\ell})^s {\sf H}_{\ell}^k \quad \text{for all } s > 0.
	\end{equation}
	Moreover, there exists $0< \theta^{\star} \leq 1$ and $\mu^{\star} > 0$ such that, for sufficiently small parameters
	\begin{equation}\label{eq:optimal4}
		0 < \mu < \mu^{\star} \quad  \text{and} \quad  0 < \frac{(\theta^{1/2} + \mu/\mu^{\star})^2}{(1-\mu/\mu^{\star})^2} < \theta^{\star}, 
	\end{equation}
	 Algorithm~\ref{algo: AFEM} guarantees that
	\begin{equation}\label{eq:optimal5}
		\copt \enorm{u^{\star}}_{\A_s} \leq \sup_{(\ell,k) \in \QQ} \cost(\ell,k)^s {\sf H}_{\ell}^k \leq \Copt \max\{\enorm{u^{\star}}_{\A_s}, {\sf H}_0^0\}.
	\end{equation}
	The constants $\Copt, \copt > 0$ depend only on the polynomial degree $p$, the initial triangulation $\TT_0$, the rate $s$, the adaptivity parameters $\theta$ and $\mu$, the solver contraction constant $q$, constants stemming from the axioms of adaptivity~\cite{axioms} for the residual error estimator~\eqref{eq: refinement indicators}, and the properties of NVB.
	In particular, every possible convergence rate is achieved with respect to the overall computational cost. \qed
\end{theorem}

The interpretation of Theorem~\ref{thm: optimal complexity} reads as follows: 
Unconditionally of the adaptivity parameters, Algorithm~\ref{algo: AFEM} leads essentially to contraction~\eqref{eq:optimal2} of the quasi-error~\eqref{eq:optimal1}, independently of the algorithmic decision for mesh-refinement or yet another solver step. 
This yields~\eqref{eq:optimal3}, which states that the convergence rate $-s$ with respect to the number of degrees of freedom $\textnormal{dim} (\XX^p_{\ell}) \simeq \# \TT_{\ell}$ and with respect to the total computational cost~\eqref{eq: comp cost} (and hence total computing time) coincide. 
Moreover, for sufficiently small parameters~\eqref{eq:optimal4}, Algorithm~\ref{algo: AFEM} will achieve optimal complexity~\eqref{eq:optimal5}: 
If the error estimator evaluated for the exact FE solutions decays with rate $-s$ along a sequence of optimal meshes, then Algorithm~\ref{algo: AFEM} guarantees that the quasi-error~\eqref{eq:optimal1} decays with rate $-s$ with respect to the computational cost, i.e., inexact solution does indeed not spoil the overall convergence behavior.

\begin{remark}
	Algorithm~\ref{algo: AFEM} and Theorem~\ref{thm: optimal complexity} can be generalized to non-symmetric second-order linear elliptic PDEs that fit into the framework of the Lax--Milgram lemma, see~\cite{FHMP}.
	Then, the algebraic solver employs a preconditioned GMRES method with an optimal symmetric and positive definite preconditioner for the principal part. Based on the present work, canonical preconditioning includes the linear and symmetric multigrid method from Section~\ref{sec: symmetric MG} as well as the multilevel additive Schwarz preconditioner from Section~\ref{sec: additive Schwarz operator}.
\end{remark}

%%%%%%%%%%%%%%%%%%%%%%%%%%%%%%%%%%%%%%%%%%%%%%%%%%%%%%%%%%%%%%%%%%%%%%%%%%%%%%%%%%%
%%%%%%%%%%%%%%%%%%%%%%%%%%%%%%%%%%%%%%%%%%%%%%%%%%%%%%%%%%%%%%%%%%%%%%%%%%%%%%%%%%%
\section{Numerical experiments}\label{sec: numerics}
%%%%%%%%%%%%%%%%%%%%%%%%%%%%%%%%%%%%%%%%%%%%%%%%%%%%%%%%%%%%%%%%%%%%%%%%%%%%%%%%%%%
%%%%%%%%%%%%%%%%%%%%%%%%%%%%%%%%%%%%%%%%%%%%%%%%%%%%%%%%%%%%%%%%%%%%%%%%%%%%%%%%%%%

In this section, we compare the behavior of the proposed algebraic solvers, where the multigrid solver from~\cite{IMPS24} is used as a baseline. 
For this, we investigate both the performance of the solver itself as well as its application in the adaptive Algorithm~\ref{algo: AFEM}. 
The experiments are conducted in the open-source Matlab package MooAFEM~\cite{IP23}.

%%%%%%%%%%%%%%%%%%%%%%%%%%%%%%%%%%%%%%%%%%%%%%%%%%%%%%%%%%%%%%%%%%%%%%%%%%%%%%%%%%%
\subsection{Considered model problems}  \label{subsec: model problems}
%%%%%%%%%%%%%%%%%%%%%%%%%%%%%%%%%%%%%%%%%%%%%%%%%%%%%%%%%%%%%%%%%%%%%%%%%%%%%%%%%%%
\begin{figure}
	\resizebox{\textwidth}{!}{
		\subfloat{
			\includegraphics{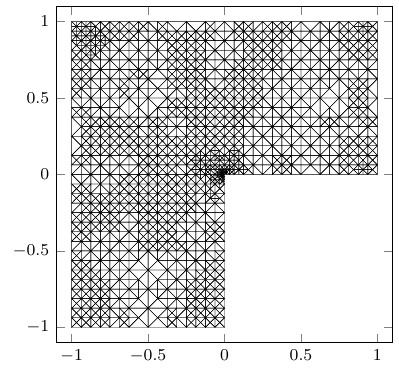}
		}
		\hfil
		\subfloat{
			\includegraphics{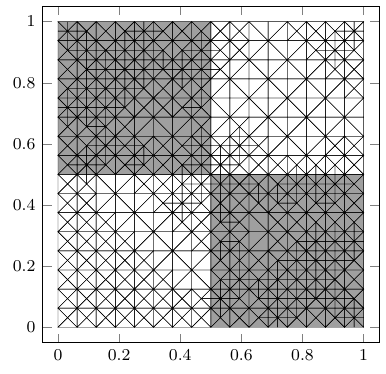}
		}
	}
	\caption{\label{fig: meshes} Adaptively refined meshes for the Poisson problem from Section~\ref{prob: lshape} with $\# \TT_{8} = 2490 $ (left) and checkerboard problem from Section~\ref{prob: checkerboard} with $\# \TT_{8}= 1242$ (right) for adaptivity parameters $\theta = 0.5$ and $\lambda = 0.1$.}
\end{figure}

We consider the following two test cases of model problem~\eqref{eq: model problem}:

\begin{itemize}
	\item \label{prob: lshape} \textit{Poisson}: Let $\Omega = (-1,1)^2 \backslash ([0,1] \times [-1,0])$ be the L-shaped domain with diffusion coefficient $\vec{K} = \vec{I}$ and right-hand side $f=1$. 
	An example of a mesh obtained via AFEM for this problem is displayed in Figure~\ref{fig: meshes} (left). 
	\item \label{prob: checkerboard} \textit{Checkerboard}: Let $\Omega = (0,1)^2$ be the unit square and $\vec{K}$ the $2 \times 2$ checkerboard diffusion with values 1 (white) and 100 (gray); see Figure~\ref{fig: meshes} (right). We refer to~\cite{Kel75} for an exact solution of this problem.
\end{itemize}

%%%%%%%%%%%%%%%%%%%%%%%%%%%%%%%%%%%%%%%%%%%%%%%%%%%%%%%%%%%%%%%%%%%%%%%%%%%%%%%%%%%
\subsection{Considered iterative solvers}  \label{subsec: iterative solvers}
%%%%%%%%%%%%%%%%%%%%%%%%%%%%%%%%%%%%%%%%%%%%%%%%%%%%%%%%%%%%%%%%%%%%%%%%%%%%%%%%%%%
We give an overview of all considered iterative solvers for the numerical experiments. 

\begin{itemize}
	\item \textbf{MG}: The geometric multigrid solver from~\cite{IMPS24} with $h$- and $p$-robust contraction factor; see Proposition~\ref{lemma: MG contraction}.
	\item \textbf{GPCG+MG}: GPCG of Algorithm~\ref{algo: GPCG} with the non-linear and non-symmetric multigrid preconditioner defined in~\eqref{eq: MG preconditioner}, for which Theorem~\ref{theorem: main result} establishes its $h$- and $p$-robust contraction.
	\item \textbf{PCG+AS}: PCG from Section~\ref{sec: GPCG} using the additive Schwarz preconditioner from~\eqref{eq: AS preconditioner}, for which Theorem~\ref{theorem: AS condition number bound} guarantees $h$- and $p$-robust contraction.
	\item \textbf{PCG+sMG}: PCG employing the symmetric multigrid preconditioner from~\eqref{eq: sMG preconditioner}, for which Theorem~\ref{corollary: symmetric MG optimality} ensures $h$- and $p$-robust contraction.
	\item \textbf{PCG+MG}: PCG with the non-linear and non-symmetric multigrid preconditioner defined in~\eqref{eq: MG preconditioner}. 
	While no theoretical contraction properties are known, we test experimentally the performance.
	\item \textbf{PCG+nsMG}: PCG with the non-symmetric multigrid preconditioner $\vec{B}_{\ell}^{\LMG}$ defined in Remark~\ref{remark: linearized MG}. 
	While no theoretical contraction properties are known, we test experimentally the performance.
\end{itemize}

%%%%%%%%%%%%%%%%%%%%%%%%%%%%%%%%%%%%%%%%%%%%%%%%%%%%%%%%%%%%%%%%%%%%%%%%%%%%%%%%%%%
\subsection{Solver contraction}  \label{subsec: solver contraction}
%%%%%%%%%%%%%%%%%%%%%%%%%%%%%%%%%%%%%%%%%%%%%%%%%%%%%%%%%%%%%%%%%%%%%%%%%%%%%%%%%%%

\begin{figure}
	\resizebox{\textwidth}{!}{
		\subfloat{
			\includegraphics{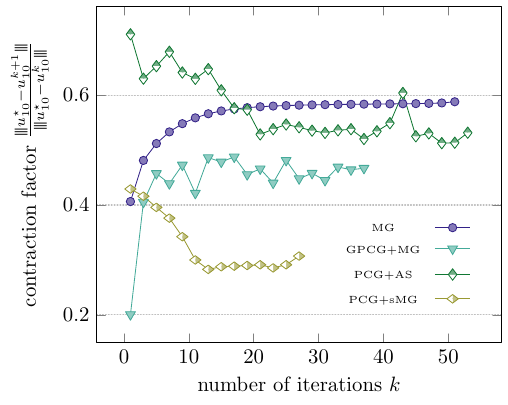}
		}
		\hfil
		\subfloat{
			\includegraphics{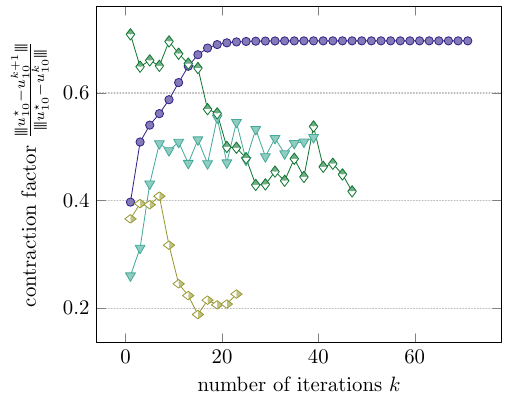}
		}
	}
	\caption{\label{fig: contraction optimal pre} History plot of the contraction factor for the Poisson problem from Section~\ref{subsec: model problems} with $\#\TT_{10} = 9723$ for $p=1$ (left) and $\# \TT_{10} = 340$ for $p=4$ (right) and solver stopping criterion $\enorm{u_{10}^{\star}-u_{10}^k} < 10^{-13}$.}
\end{figure}

To calculate the experimental contraction factors of the different methods, we first precompute a mesh hierarchy with $\ell = 10$ and corresponding polynomial degree for the Poisson problem from Section~\ref{subsec: model problems} using Algorithm~\ref{algo: AFEM} together with the geometric multigrid solver from~\cite{IMPS24} and parameters $\theta = 0.5$ and $\lambda = 0.1$. 
As stopping criterion, we iterate until the algebraic error $\enorm{u_{10}^{\star}-u_{10}^k}$ drops below $10^{-13}$; see
Figure~\ref{fig: contraction optimal pre}. 
While Theorem~\ref{theorem: main result} provides the same analytical contraction factor for both the standalone multigrid method and GPCG employing the multigrid preconditioner.
Figure~\ref{fig: contraction optimal pre} shows that the latter performs better.
Among the tested methods, PCG+sMG attains the smallest contraction factor.
However, one should interpret this comparison with caution, since each iteration of the symmetric preconditioner requires roughly twice the computational complexity of its non-symmetric counterpart.
Another notable difference is that the contraction factor of GPCG+MG increases with the number of solver steps, whereas for PCG methods it decreases.
This behavior can be attributed to PCG being a Krylov subspace method, where minimization occurs over an expanding subspace, while GPCG lacks this property.
This could also help explain why the contraction factor for PCG+sMG improves over the iterations.
Note that, as stated by theory, these contraction factors are indeed robust in the polynomial degree $p$. 

%%%%%%%%%%%%%%%%%%%%%%%%%%%%%%%%%%%%%%%%%%%%%%%%%%%%%%%%%%%%%%%%%%%%%%%%%%%%%%%%%%%
\subsection{Overall solver performance within AFEM}  \label{subsec: AFEM experiments}
%%%%%%%%%%%%%%%%%%%%%%%%%%%%%%%%%%%%%%%%%%%%%%%%%%%%%%%%%%%%%%%%%%%%%%%%%%%%%%%%%%%

\begin{figure}
	\resizebox{\textwidth}{!}{
		\subfloat{
			\includegraphics{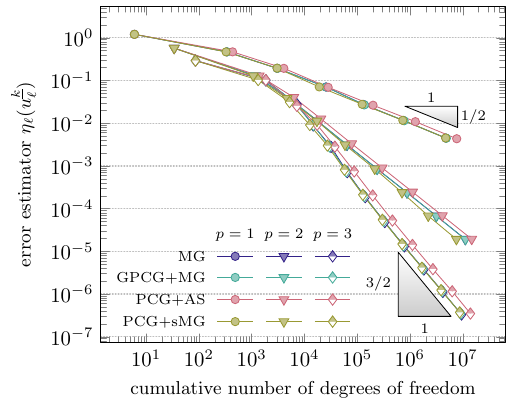}
		}
		\hfil
		\subfloat{
			\includegraphics{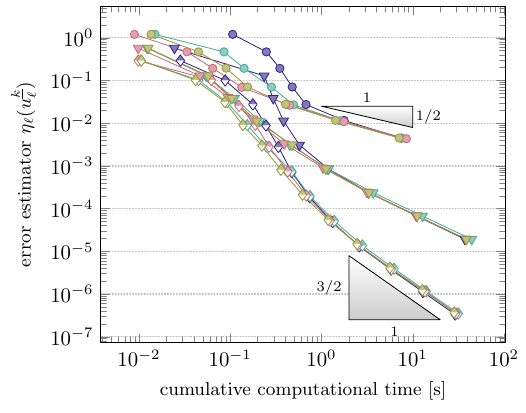}
		}
	}
	\caption{\label{fig: optimal cost pre} Convergence of the error estimator $\eta_{\ell}(u_{\ell}^{\underline{k}})$ for the Poisson problem from Section~\ref{subsec: model problems} with $\theta = 0.5$ and $\lambda = 0.05$.}
\end{figure}

\begin{figure}
	\resizebox{\textwidth}{!}{
		\subfloat{
			\includegraphics{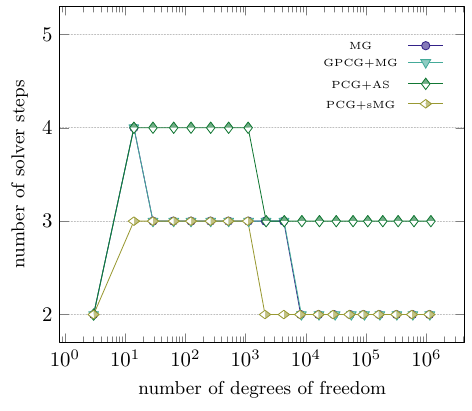}
		}
		\hfil
		\subfloat{
			\includegraphics{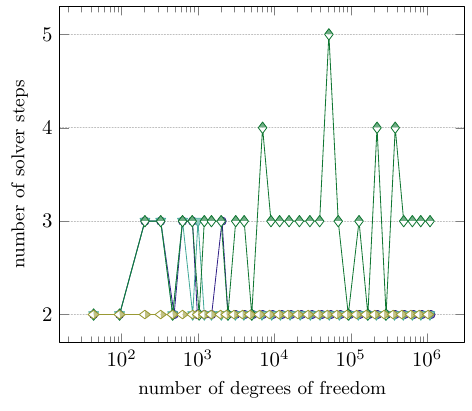}
		}
	}
	\caption{\label{fig: solver steps pre} History plot of solver steps with respect to the degrees of freedom for the Poisson problem from Section~\ref{subsec: model problems} with  $p=1$ (left) and $p=3$ (right).}
\end{figure}

Figure~\ref{fig: optimal cost pre} displays the error estimator $\eta_{\ell}(u_{\ell}^{\underline{k}})$, computed by Algorithm~\ref{algo: AFEM}, over the cumulative degrees of freedom and the cumulative time. 
Note that for the final iterate the error estimator $\eta_{\ell}(u_{\ell}^{\underline{k}})$ is equivalent to the quasi-error ${\sf H}_{\ell}^{\underline{k}}$ from Theorem~\ref{thm: optimal complexity}.
This follows from the axioms of adaptivity and the stopping criterion in Algorithm~\ref{algo: AFEM}; see, e.g.,~\cite{BMP24}.
After a pre-asymptotic phase, one can observe the optimal convergence rates $-p/2$ in Figure~\ref{fig: optimal cost pre} both with respect to the cumulative degrees of freedom and with respect to the cumulative time, across all optimal solvers GPCG+MG, PCG+AS, and PCG+sMG developed in this work.
In Figure~\ref{fig: optimal cost pre} (right), we see that the standalone MG is slower in the pre-asymptotic phase compared to the other solvers.
Moreover, Figure~\ref{fig: solver steps pre} confirms numerically that the number of solver steps with respect to the degrees of freedom for $p \in \{1,3\}$ remains uniformly bounded for all solvers.

\begin{figure}
	\resizebox{\textwidth}{!}{
		\subfloat{
			\includegraphics{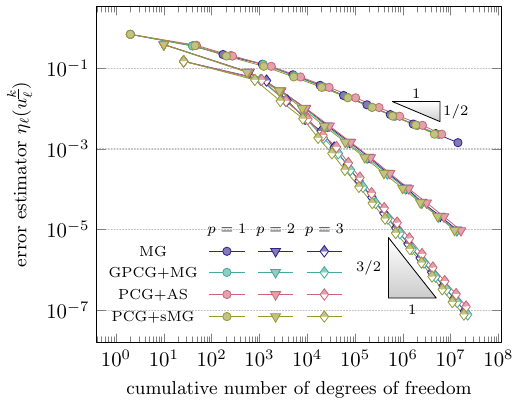}
		}
		\hfil
		\subfloat{
			\includegraphics{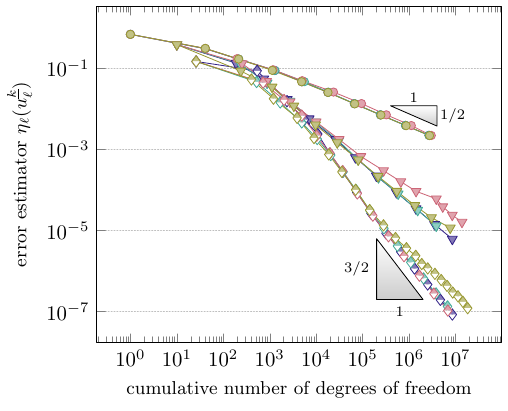}
		}
	}
	\caption{\label{fig: checkerboard optimality} Convergence of the error estimator for the checkerboard problem from Section~\ref{subsec: model problems} with $\lambda = 0.01$, $\theta = 0.3$ (left) and $\lambda = 0.05$, $\theta = 0.5$ (right).}
\end{figure} 

\begin{figure}
	\resizebox{\textwidth}{!}{
		\subfloat{
			\includegraphics{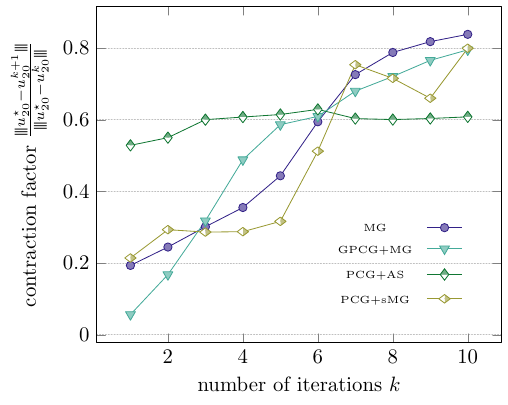}
		}
		\hfil
		\subfloat{
			\includegraphics{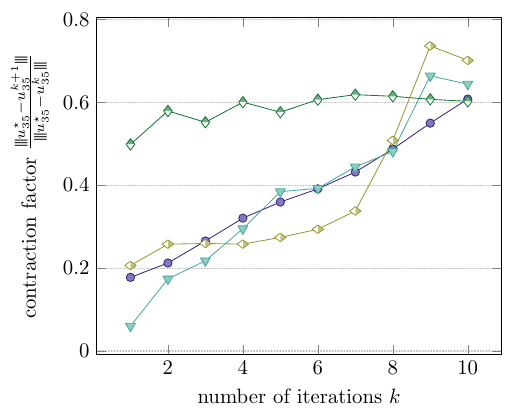}
		}
	}
	\caption{\label{fig: checkerboard contraction} History plot of the contraction factor for the checkerboard problem from Section~\ref{subsec: model problems} with $\ell=20$ (with $\# \TT_{\ell} = 97136$) for $p=2$  (left) and $\ell=35$ (with $\# \TT_{\ell} = 77681$) for $p=3$ (right).}
\end{figure}

We now consider the checkerboard problem from Section~\ref{subsec: model problems} for AFEM with MG, GPCG+MG, PCG+AS, and PCG+sMG.
The parameters are set to $\lambda \in \{0.01,0.05\}$, $p \in \{1,3\}$, and $\theta \in \{0.3,0.5\}$, and we study the decrease of the error estimator $\eta_{\ell}(u_{\ell}^{\underline{k}})$ with respect to the cumulative number of degrees of freedom.
In Figure~\ref{fig: checkerboard optimality} (left), all four solvers exhibit optimal convergence rates for $\lambda = 0.01$ and $\theta = 0.3$. 
In contrast, when $\lambda = 0.05$ and $\theta = 0.5$, the convergence rates for PCG+sMG degrade for both $p=2$ and $p=3$. 
Additionally, PCG+AS shows a suboptimal rate for $p=2$. 
While MG and GPCG+MG appear to maintain optimal rates across all configurations, a closer inspection for $p=3$ reveals a slightly suboptimal rate, as shown in Figure~\ref{fig: checkerboard optimality} (right).
This does not contradict Theorem~\ref{thm: optimal complexity}, since the assumptions require sufficiently small adaptivity parameters.

To gain a deeper insight into the solvers' behavior, we compute their experimental contraction factors using a pre-computed mesh hierarchy with $\ell = 20$ levels for $p=2$ and $\ell=35$ levels for $p=3$. 
Figure~\ref{fig: checkerboard contraction} shows the first 10 iterations, where PCG+AS and PCG+sMG exhibit larger contraction factors in the initial iterations compared to GPCG+MG. 
For this specific problem and parameter choices ($\lambda = 0.01$ and $\theta = 0.3$), the number of iterations per level required by any tested algebraic solver within the AFEM algorithm never exceeded 8 and was typically around 2. 
This observation may partially explain the inferior performance of PCG+AS and PCG+sMG. 
Additionally, both MG and GPCG+MG employ optimal step sizes (see step~\eqref{eq: MG step 2} and~\eqref{eq: MG step 3} of Algorithm~\ref{algo: geometric MG}) within the multigrid scheme, whereas PCG+sMG relies on a fixed step size (see Algorithm~\ref{algo: symmetric MG}), which may further contribute to its suboptimal behavior.

\begin{figure}
	\resizebox{\textwidth}{!}{
		\subfloat{
			\includegraphics{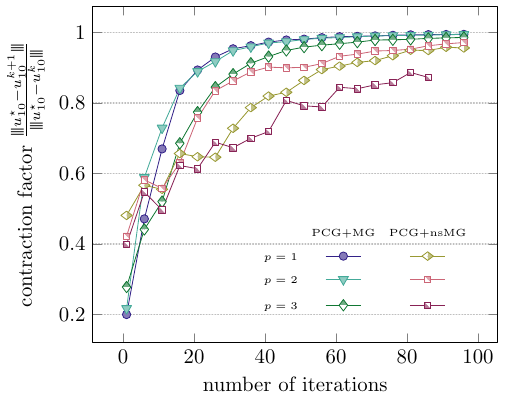}
		}
		\hfil
		\subfloat{
			\includegraphics{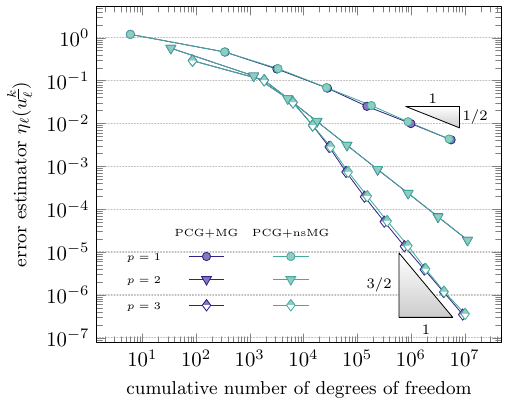}
		}
	}
	\caption{\label{fig: bad preconditoners contraction} History plot of the contraction factor for the Poisson problem from Section~\ref{subsec: model problems} employing the algebraic solvers PCG+MG and PCG+nsMG until $\enorm{u_{10}^{\star}-u_{10}^k} < 10^{-13}$ or for a maximum of 100 iterations with $\# \TT_{10} = 9723$ for $p=1$, $\# \TT_{10} = 1096$ for $p=2$, and $\#\TT_{10} = 405$ for $p=3$ (left).
	Convergence of the error estimator $\eta_{\ell}(u_{\ell}^{\underline{k}})$ for AFEM with PCG+MG and PCG+nsMG using $\theta = 0.5$ and $\lambda = 0.05$ (right).}
\end{figure}

\subsection{Warning example}
Although the theory does not cover non-linear and/or non-symm- \linebreak etric preconditioners within PCG, we nevertheless test the solvers PCG+MG and PCG+nsMG from Section~\ref{subsec: iterative solvers} numerically.
Figure~\ref{fig: bad preconditoners contraction} (left) shows that the contraction factors deteriorate for PCG+MG for all polynomial degree and for PCG+nsMG when $p \in \{1,2\}$. 
Since, in these cases the algebraic error $\enorm{u_{10}^{\star}-u_{10}^k}$ does not drop below $10^{-13}$, we stop the solver  after 100 iterations and report the final errors in Table~\ref{tab: final error}.
Despite this poor algebraic performance, we also test PCG+MG and PCG+nsMG within AFEM. 
Figure~\ref{fig: bad preconditoners contraction} (right) shows optimal convergence rates
with respect to the cumulative number of degrees of freedom. 
This is likely because AFEM typically requires only a small number of solver iterations per level (see Figure~\ref{fig: solver steps pre}).
Consequently, the deterioration of the contraction factors has little influence in this setting. 
However, for more difficult problems, or for smaller values of the parameter $\mu$, AFEM requires more solver steps, so that optimal complexity will be affected by the degraded contraction behavior. 
Thus, both PCG+MG and PCG+nsMG are not suitable for use within AFEM.

\begin{table}
	\centering
	\begin{tabular}{ccccc}
		\toprule
		& \multicolumn{2}{c}{PCG+MG} & \multicolumn{2}{c}{PCG+nsMG} \\
		\cmidrule(r){2-3} \cmidrule(r){4-5}
		& \footnotesize final error & \# \footnotesize solver steps & \footnotesize final error & \footnotesize \# solver steps  \\
		\midrule
		$p=1$ & $1.373\times 10^{-7}$ & 100 & $4.562 \times 10^{-12}$ & 100 \\
		$p=2$  & $5.816 \times 10^{-7}$ & 100 & $7.677 \times 10^{-10}$ & 100 \\
		$p=3$ & $1.127 \times 10^{-9}$ & 100 & $9.338 \times 10^{-14}$ & 90\\
		\bottomrule
	\end{tabular}
	\vspace{0.1cm}
	\caption{\label{tab: final error} Final algebraic error and number of solver steps of PCG+MG and PCG+nsMG for the Poisson problem from Section~\ref{subsec: model problems} applied to a pre-computed mesh hierarchy with $\ell=10$ levels after 100 iterations or the first error below $10^{-13}$. We stress that convergence of PCG+MG and PCG+nsMG remains theoretically open and can fail.}
\end{table}

\printbibliography

\end{document}